\documentclass[12pt,letterpaper]{amsart}
\usepackage{amsmath,txfonts}
\usepackage{amssymb}
\usepackage{amsxtra}
\usepackage{amsthm, color}
\usepackage{txfonts}
\usepackage{graphicx}
\usepackage{times}
\usepackage{citeref}
\usepackage{tikz}
\usepackage{hyperref}
\usepackage{stmaryrd}
\usepackage[T3,T1]{fontenc}
\usepackage{pgfplots}

\usetikzlibrary{calc}

\usepackage{mathrsfs}
\usepackage{amsfonts}
\usepackage{amssymb}
\usepackage{ifthen}
\usepackage{graphicx}
\nonstopmode \numberwithin{equation}{section}

\newtheorem{thm}{Theorem}[section]
\newtheorem{lem}{Lemma}[section]
\newtheorem{cor}[thm]{Corollary}
\newtheorem{prop}[thm]{Proposition}

\newtheorem{step}{Step}[section]

\theoremstyle{definition}
\newtheorem{mlem}{Main lemma}[section]
\newtheorem{assertion}{Assertion}[section]
\newtheorem{cl}{Claim}[section]
\newtheorem{ca}{Case}[section]
\newtheorem{sca}{Subcase}[section]
\newtheorem{scl}{Subclaim}[section]
\newtheorem{conj}[thm]{Conjecture}
\newtheorem{fact}{Fact}[section]
\newtheorem{defn}[thm]{Definition}
\newtheorem{op}[thm]{Open Problem}

\newtheorem{ques}{Question}[section]
\newtheorem{rem}[thm]{Remark}
\newtheorem{exam}[thm]{Example}

\numberwithin{equation}{section}

\newcounter {own}
\def\theown {\thesection       .\arabic{own}}

\newenvironment{pf}[1][]{%
 \vskip 3mm
 \noindent
 \ifthenelse{\equal{#1}{}}%
  {{\slshape Proof. }}%
  {{\slshape #1.} }%
 }%
{\qed\bigskip}

\newcounter{alphabet}
\renewcommand{\thealphabet}{\Alph{alphabet}}

\newenvironment{Thm}[1][]{\refstepcounter{alphabet}%
	\bigskip
	\noindent
	{\bf Theorem \thealphabet}%
	\ifthenelse{\equal{#1}{}}{}{ (#1)}%
	{\bf .} \itshape
}{\vskip 8pt}

\newenvironment{Lem}[1][]{\refstepcounter{alphabet}%
	\bigskip
	\noindent
	{\bf Lemma \thealphabet}%
	{\bf .} \itshape
}{\vskip 8pt}

\newcommand{\Aut}{{\operatorname{Aut}}}

\def\be{\begin{equation}}
\def\ee{\end{equation}}

\newcommand{\ben}{\begin{enumerate}}
\newcommand{\een}{\end{enumerate}}

\newcommand{\blem}{\begin{lem}}
\newcommand{\elem}{\end{lem}}
\newcommand{\bthm}{\begin{thm}}
\newcommand{\ethm}{\end{thm}}
\newcommand{\bcor}{\begin{cor}}
\newcommand{\ecor}{\end{cor}}
\newcommand{\beg}{\begin{exam}}
\newcommand{\eeg}{\end{exam}}
\newcommand{\begs}{\begin{examples}}
\newcommand{\eegs}{\end{examples}}
\newcommand{\bdefe}{\begin{defn}}
\newcommand{\edefe}{\end{defn}}
\newcommand{\bques}{\begin{ques}}
\newcommand{\eques}{\end{ques}}
\newcommand{\bei}{\begin{itemize}}
\newcommand{\eei}{\end{itemize}}
\newcommand{\bcon}{\begin{conj}}
\newcommand{\econ}{\end{conj}}
\newcommand{\bop}{\begin{op}}
\newcommand{\eop}{\end{op}}

\newcommand{\bas}{\begin{assertion}}
\newcommand{\eas}{\end{assertion}}

\newcommand{\bfa}{\begin{fact}}
\newcommand{\efa}{\end{fact}}

\newcommand{\bca}{\begin{ca}}
\newcommand{\eca}{\end{ca}}

\newcommand{\bst}{\begin{step}}
\newcommand{\est}{\end{step}}

\newcommand{\bsca}{\begin{sca}}
\newcommand{\esca}{\end{sca}}

\newcommand{\bcl}{\begin{cl}}
\newcommand{\ecl}{\end{cl}}

\newcommand{\bmlem}{\begin{mlem}}
\newcommand{\emlem}{\end{mlem}}

\newcommand{\bscl}{\begin{scl}}
\newcommand{\escl}{\end{scl}}

\newcommand{\bcons}{\begin{conjs}}
\newcommand{\econs}{\end{conjs}}

\newcommand{\bprop}{\begin{prop}}
\newcommand{\eprop}{\end{prop}}

\newcommand{\br}{\begin{rem}}
\newcommand{\er}{\end{rem}}
\newcommand{\brs}{\begin{rems}}
\newcommand{\ers}{\end{rems}}
\newcommand{\bo}{\begin{obser}}
\newcommand{\eo}{\end{obser}}
\newcommand{\bos}{\begin{obsers}}
\newcommand{\eos}{\end{obsers}}
\newcommand{\bpf}{\begin{pf}}
\newcommand{\epf}{\end{pf}}
\newcommand{\ba}{\begin{array}}
\newcommand{\ea}{\end{array}}
\newcommand{\beq}{\begin{eqnarray}}
\newcommand{\beqq}{\begin{eqnarray*}}
\newcommand{\eeq}{\end{eqnarray}}
\newcommand{\eeqq}{\end{eqnarray*}}

\newcounter{minutes}
\divide\time by 60
\newcounter{hours}
\multiply\time by 60 \addtocounter{minutes}{-\time}

\begin{document}
	\title{Hardy-Littlewood type phenomena and the Girela-Pel\'aez conjecture for the M\"obius invariant Laplacian operator}

	\author{Jiaolong Chen}
	\address{J. L. Chen, Key Laboratory of Computing and Stochastic Mathematics (Ministry of Education),
 School of Mathematics and Statistics, Hunan Normal University, Changsha, Hunan 410081, P. R. China}
	\email{jiaolongchen@sina.com}

\author{Shaolin Chen${}^{~\mathbf{*}}$}
\address{S. L. Chen,    Center for Applied Mathematics of Guangxi, Guangxi Normal University,
Guilin, Guangxi 541004, People's Republic of China} \email{mathechen@126.com}

\author{Hidetaka Hamada}
\address{H. Hamada, Industry-Academia Co-innovation and Research Promotion Headquarters,
Kyushu Sangyo University,
3-1 Matsukadai 2-Chome, Higashi-ku, Fukuoka 813-8503, Japan.}
\email{hi.hamada01@gmail.com}

	\author{Qianyun Li}
	\address{Q. Y. Li, Key Laboratory of Computing and Stochastic Mathematics (Ministry of Education),
School of Mathematics and Statistics, Hunan Normal University, Changsha, Hunan 410081, P. R. China}
	\email{liqianyun@hunnu.edu.cn}
	
	\keywords{Hardy-Littlewood type theorems, modulus of continuity, M\"obius invariant Laplacian.\\
		$^{\mathbf{*}}$Corresponding author}
	
	\subjclass[2020]{Primary  42B37, 32H02; Secondary 30H10.}

	\maketitle
	
	\makeatletter\def\thefootnote{\@arabic\c@footnote}\makeatother

\begin{abstract}
The purpose of this paper is twofold. First, we investigate  the Hardy-Littlewood type phenomena for Dirichlet solutions to the M\"obius invariant Laplace equation
on the unit ball in $\mathbb{R}^n$.
Our work extends and improves several key results due to  Pavlov\'c [Rev. Mat. Iberoam. 23: 831-845, 2007] and Chen et al. [J. Geom. Anal. 34: 23 pp, 2024]. In particular, we give a complete answer to a question raised by Makoto Masumoto. Second, motivated by Aikawa's work, we study the boundedness of the operator norm of $P_{\alpha}$,
where $P_{\alpha}[\varphi]$ is the Dirichlet solution of such equation for the boundary data $\varphi$.
By using alternative proof techniques, we obtain an equivalent characterization of the boundedness of the operator norm of $P_{\alpha}$.
Finally, we show that the Girela-Pel\'aez conjecture holds positively for more general classes of functions induced by the M\"obius invariant Laplacian operator.
\end{abstract}
	
\maketitle \pagestyle{myheadings}
\markboth{J. L. Chen, S. L. Chen, H. Hamada and Q. Y. Li}{Hardy-Littlewood type phenomena and the Girela-Pel\'aez conjecture for the M\"obius invariant Laplacian operator}

	\section{Preliminaries and the statement of the main results}\label{csw-sec1}
	
	Let $\mathbb{R}^{n}$ denote the standard real vector space of dimension $n$, where $n$ is a positive integer.
For
$$\left(x=(x_{1},\ldots,x_{n}), y=(y_{1},\ldots,y_{n})\right)\in\mathbb{R}^{n}\times \mathbb{R}^{n},
$$
the standard
 Hermitian scalar product of $x$ and $y$, and the
Euclidean norm of $x$ are given by $$\langle x,y\rangle :=
\sum_{j=1}^nx_jy_j\;\;\mbox{and}\;\; |x|:={\langle
x,x\rangle}^{1/2}, $$ respectively.
Sometimes it is convenient to
identify each point $x=(x_{1},\ldots,x_{n})\in\mathbb{R}^{n}$ with an $n\times 1$ column matrix
so that
$$x=\left(\begin{array}{cccc}
x_{1}   \\
\vdots \\
 x_{n}
\end{array}\right).
$$
We denote the unit ball and the unit sphere in $\mathbb{R}^n$ by $\mathbb{B}^n$ and $\mathbb{S}^{n-1}$, respectively.
In particular, let $\mathbb{R}:=\mathbb{R}^{1}$, $\mathbb{D}:=\mathbb{B}^2$ and $\mathbb{T}:=\mathbb{S}^1$.
For a matrix $A = (a_{ij})_{n \times n} \in \mathbb{R}^{n \times n}$, the matrix norm is defined as
$$\|A\|=\sup\left\{|A\xi|:\; \xi\in \mathbb{S}^{n-1}\right\}.$$


For $x\in\mathbb{B}^{n}$, let $$\delta_{\alpha}(x)=(1-|x|^{2})^{\alpha}$$
be the standard weight on $\mathbb{B}^{n}$, where $\alpha\in\mathbb{R}$. We recall the
differential operator $\Delta_{\delta_{\alpha}}$ associated with the  weight $\delta_{\alpha}$,
defined by
\be\label{C-y}\Delta_{\delta_{\alpha}}:={\rm div}(\delta_{\alpha}^{-1}\nabla)+\alpha(n-2-\alpha)\delta_{\alpha+1}^{-1},\ee
where  $\nabla$ and ${\rm div}$ denote {\it the gradient} and {\it divergence operators}, respectively. For $n=2$,
the expression (\ref{C-y}) reveals a connection to the  conductivity equations studied by
Astala and P\"aiv\"arinta \cite{A-P}. 

A mapping $u \in C^{2}\left(\mathbb{B}^{n}\right)(n \geq 2)$, i.e., the set of all twice continuously differentiable functions of
$\mathbb{B}^{n}$, is said to be {\it $\alpha$-harmonic} if it satisfies {\it the  M\"{o}bius invariant  Laplacian equation}
$$\Delta_{\alpha}u=0,$$
where
\begin{equation*}
\Delta_{\alpha}:=(1-|x|^{2})^{\alpha+2}\Delta_{\delta_{\alpha}}=(1-|x|^{2})\left[(1-|x|^{2})\Delta+2\alpha\langle\nabla,x\rangle+\alpha(n-2-\alpha)\right],
\end{equation*}
$x=(x_{1},\ldots,x_{n})\in\mathbb{B}^{n}$ and
$$\Delta:=\sum_{j=1}^{n}\frac{\partial^{2}}{\partial\,x^{2}_{j}}$$ is the classical {\it Laplace operator} (see \cite{ABR-2001}).
Here, we refer to $\Delta_{\alpha}$ as {\it the M\"obius invariant Laplacian} because it satisfies the following invariance property (see \cite[Proposition 3.2]{Liu09}):
\begin{equation*}
		\Delta_{\alpha}\left\{\left(\operatorname{det} D\psi (x)\right)^{\frac{n-2- \alpha}{2 n}} u(\psi(x))\right\}=\left(\operatorname{det} D\psi (x)\right)^{\frac{n-2-\alpha}{2 n}}\left(\Delta_{\alpha} u\right)(\psi(x))
	\end{equation*}
for every $u \in C^{2}(\mathbb{B}^n)$ and every $\psi \in {\rm Aut}(\mathbb{B}^n)$,
where ${\rm Aut}(\mathbb{B}^n)$ denotes all M\"obius transformations  of $\mathbb{B}^{n}$ onto itself.
Obviously, $\Delta_{0}=\Delta$.
	


In particular, when $\alpha=0$ or $\alpha=n-2$, the mapping $u$ is referred to as {\it harmonic} or {\it hyperbolic harmonic}, respectively.
Let \be\label{eq-mo}\Delta_{h}:=\Delta_{n-2}.\ee
See, for example, \cite{ABR-2001,chen2021,chen2018,Sto-2016} for the properties of these classes of mappings.

The operator $\Delta_{\alpha}$ is closely connected with polyharmonic mappings (see \cite{AH2014, Liu21}) and with solutions to the Weinstein equation (see \cite{leu}). For a detailed treatment of the fundamental properties of $\Delta_{\alpha}$ in the unit disk $\mathbb{D}$, we refer to \cite{AH2014, chen23, chen15, KMM2021, Ol14, Ol20}. For extensions to higher dimensions, see \cite{chen24, li24, Liu04, Liu09, Liu21, Liu24, ZHD24, zhou22} and the references therein.


	In \cite{Liu04}, Liu and Peng  investigated  the solvability of the
	Dirichlet problem associated with the (M\"{o}bius) invariant Laplacian as  follows:
	\begin{equation}\label{eq-1.1}
		\left\{\begin{array}{ll}
			\Delta_{\alpha} u(x)=0, & x\in\mathbb{B}^{n}, \\
			u(\zeta)=\phi(\zeta),& \zeta\in\mathbb{S}^{n-1}.
		\end{array}\right.
	\end{equation}
	They showed that if   $\phi\in C ( \mathbb{S}^{n-1})$, i.e., the set of all  continuous functions of
$\mathbb{S}^{n-1}$, then the Dirichlet problem \eqref{eq-1.1}
	has a solution  if and only if $\alpha>-1$ (see \cite[Theorem 2.4]{Liu04}).
	In this case the solution is unique and   can be expressed by the following Poisson type integral:
	\begin{align*}
		u(x)&=\int_{\mathbb{S}^{n-1}} P_{\alpha}(x, \zeta) \phi(\zeta) d \sigma(\zeta)={P}_{\alpha}[\phi](x),	
	\end{align*}
	where $d \sigma$  denotes the normalized surface measure on  $\mathbb{S}^{n-1}$   so that $\sigma(\mathbb{S}^{n-1})=1$,
	and
	\begin{align*}
		P_{\alpha}(x, \zeta)&=C_{n, \alpha} \frac{\left(1-|x|^{2}\right)^{1+ \alpha}}{|x-\zeta|^{n+\alpha}}
		\;\;\text{with}\;\;C_{n, \alpha}=\frac{\Gamma\left(\frac{n+\alpha}{2}\right) \Gamma(1+\frac{\alpha}{2})}{\Gamma\left(\frac{n}{2}\right) \Gamma(1+\alpha)}.
	\end{align*}
	Note that $P[\phi]:=P_0[\phi]$ is the usual {\it Poisson integral} of $\phi$
	and is harmonic in $\mathbb{B}^n$.

For convenience, we make a notational convention.
Throughout this paper,
 we use the symbol $C$ to denote various positive
constants, whose values may change from one occurrence to another.
Also we denote by $C=C_{a_{1},a_{2},\ldots}$ a constant that depends only on the given parameters $a_{1}$, $a_{2}$, $\ldots$
and whose value may vary from one occurrence to another.

	\subsection*{The invariant measure of $SO(n)$}
 \begin{defn}
 The set $SO(n) \subseteq O(n)$ of special orthogonal matrices is defined as
$$
		SO(n) := \{ U \in O(n) : \det(U) = 1 \},
$$
where $O(n)$ denotes the set of $n \times n$ real orthogonal matrices and each $U\in SO(n)$ is called a  rotation matrix.
	\end{defn}
	
	Let $D(n)$ be the  set of  $\{\theta_{i, j+1}:  1 \leq i \leq j \leq n-1\}$ with
$$
	0 \leq \theta_{1, j+1} < 2\pi \quad (1\leq j\leq n-1), \quad 0 \leq \theta_{i, j+1} \leq \pi \quad (2 \leq i \leq j \leq n-1).
$$
The above angles  are called Euler angles.
Under the Euler angle parameterization, each rotation matrix   corresponds to a set of angles $\{\theta_{j,k}\}$ (see \cite[p. 5]{DF16}).
	
In \cite{Hur1897},	Hurwitz introduced an invariant measure $d\mu$ for the orthogonal group $SO(n)$.
 Later, Diaconis and Porrester \cite{DF16} provides an explicit representation formula for it in terms of the Euler angles, i.e.,
	\begin{equation}\label{eq-1.1.3}
		d\mu  = 2^{n(n-1)/4} \prod_{1 \leq j < k \leq n}  ( \sin \theta_{j,k}  )^{j-1} d\theta_{j,k}.
	\end{equation}
	
	The volume of $SO(n)$, denoted by $\operatorname{vol}(SO(n))$, is defined as the integral of \eqref{eq-1.1.3} over the allowed range of Euler angles in $D(n)$. As noted by Hurwitz \cite{Hur1897}, performing this calculation yields
$$
	\operatorname{vol}(SO(n)) = \frac{1}{2} 2^{n(n+3)/4} \prod_{k=1}^{n} \frac{\pi^{k/2}}{\Gamma(k/2)}.
$$
	
	\subsection*{The $\Lambda_{\omega, p}(\mathbb{B}^{n})$ spaces} 

A continuous increasing function $\omega: [0, \infty) \to [0, \infty)$ with $\omega(0) = 0$ is called a {\it majorant} if $\omega(t)/t$ is non-increasing for $t>0$ (see \cite{DK1997, DK04}). According to \cite[p.147]{DK04}, a majorant $\omega$ is said to be {\it fast} if there exist some $\delta_0 > 0$ and a positive constant $C$ such that
$$\int_{0}^{\delta} \frac{\omega(t)}{t} d t \leqslant C \omega(\delta), \quad\mbox{for all}~ 0<\delta<\delta_{0},$$
and {\it slow} if there exist some $\delta_0 > 0$ and a positive constant $C$ such that
$$\delta \int_{\delta}^{\infty} \frac{\omega(t)}{t^{2}} d t \leqslant C \omega(\delta),
\quad \mbox{for all}~ 0<\delta<\delta_{0}.$$
A majorant $\omega$ is said to be {\it regular} if it is both fast and slow.

	Given a majorant  $\omega$  and a subset  $\Omega$  of  $\mathbb{R}^{n}$, a function  $f$  of  $\Omega$  into  $\mathbb{R}$   is said to belong to the {\it Lipschitz space}  $\Lambda_{\omega}(\Omega)$  if there is a positive constant  $C$  such that
	\begin{equation}\label{eq-CCHL-1}
	|f(x)-f(y)| \leq C \omega(|x-y|), \quad x, y \in \Omega .
	\end{equation}
	Furthermore, let
	$$
	\|f\|_{\Lambda_{\omega}(\Omega),s}:=\sup _{x, y \in \Omega, x \neq y} \frac{|f(x)-f(y)|}{\omega(|x-y|)}<\infty
	$$
	and
	\[
	\|f\|_{\Lambda_{\omega}(\Omega)}:=\sup_{x\in \Omega}|f(x)|+\|f\|_{\Lambda_{\omega}(\Omega),s}.
	\]
	Note that if  $\Omega$  is a proper subdomain of  $\mathbb{R}^{n}$  and  $f \in \Lambda_{\omega}(\Omega)$, then  $f$  is continuous on  $\overline{\Omega}$  and
 (\ref{eq-CCHL-1})  holds for $ x, y \in \overline{\Omega}$  (see \cite[p. 1]{DK04}).
	
	 Furthermore, we denote by  $\Lambda_{\omega, p}(\mathbb{B}^{n})$   the class of all Borel functions  $f: \mathbb{B}^{n} \to \mathbb{R}$ satisfying
$$\mathcal{L}_{p}[f]\left(x_{1}, x_{2}\right) \leqslant C \omega\left(\left|x_{1}-x_{2}\right|\right),~\mbox{for all}~x_{1}, x_{2} \in \mathbb{B}^{n},$$ where $C$ is a positive constant and $\mathcal{L}_{p}[f](x_1, x_2)$ is defined by
	$$
	\mathcal{L}_{p}[f]\left(x_{1}, x_{2}\right)=\left\{\begin{array}{ll}
		\left(\int_{SO(n)}\left|  f\left(Rx_{1}\right)-f\left(Rx_{2}\right)\right|^{p} d\mu(R)\right)^{\frac{1}{p}}, & \text { if } p \in(0, \infty), \\
		\left|f\left(x_{1}\right)-f\left(x_{2}\right)\right|, & \text { if } p=\infty.
	\end{array}\right.
	$$
	Here $R\in SO(n)$, and the integral over  $SO(n)$ is given explicitly by
 $$
 \int_{SO(n)}  f\left(Rx\right) d\mu(R)
 =\int_{K}
  f(R(\theta_{j,k})x)2^{n(n-1)/4} \prod_{1 \leq j < k \leq n} ( \sin \theta_{j,k}  )^{j-1}
  d\theta_{j,k},
 $$
where  $K$ is the domain of integration for the Euler angles parameterizing $SO(n)$.
	In particular, for $n=2$, the integral over $SO(2)$ evaluates as follows:
 $$\int_{SO(n)}d\mu(R)=\int_{0}^{2\pi}\sqrt{2}d\theta=2\sqrt{2}\pi.$$
	
	The {\it Lipschitz constant} of  $f \in \Lambda_{\omega, p}(\mathbb{B}^{n})$ is defined as follows:
	$$
	\|f\|_{\Lambda_{\omega, p}(\mathbb{B}^{n}),s}:=\sup _{x_{1}, x_{2} \in \mathbb{B}^{n}, x_{1} \neq x_{2}} \frac{\mathcal{L}_{p}[f]\left(x_{1}, x_{2}\right)}{\omega\left(\left|x_{1}-x_{2}\right|\right)}<\infty.
	$$
	
	Obviously,  $\|f\|_{\Lambda_{\omega, \infty}(\mathbb{B}^{n}), s}=\|f\|_{\Lambda_{\omega}(\mathbb{B}^{n}), s}$  and  $\Lambda_{\omega, \infty}(\mathbb{B}^{n})=\Lambda_{\omega}(\mathbb{B}^{n})$. Moreover, we define the space  $\Lambda_{\omega, p}(\mathbb{S}^{n-1})$, consisting of  $f \in L^{p}(\mathbb{S}^{n-1})$  for which
	$$
	\mathcal{L}_{p}[f]\left(\xi_{1}, \xi_{2}\right) \leqslant C \omega\left(\left|\xi_{1}-\xi_{2}\right|\right), \quad \xi_{1}, \xi_{2} \in \mathbb{S}^{n-1},
	$$
	where  $C>0$  is a constant and
	$$
	\mathcal{L}_{p}[f]\left(\xi_{1}, \xi_{2}\right)=\left\{\begin{array}{ll}
		\left(\int_{SO(n)}\left|f\left( R\xi_{1}\right)-f\left( R\xi_{2}\right)\right|^{p} d \mu(R)\right)^{\frac{1}{p}}, & \text { if } p \in(0, \infty), \\
		\left|f\left(\xi_{1}\right)-f\left(\xi_{2}\right)\right|, & \text { if } p=\infty .
	\end{array}\right.
	$$
	Furthermore, for $f \in \Lambda_{\omega, p}(\mathbb{S}^{n-1})$, let
	$$
	\|f\|_{\Lambda_{\omega, p}(\mathbb{S}^{n-1}),s}:=\sup _{x_{1}, x_{2} \in \mathbb{S}^{n-1}, x_{1} \neq x_{2}} \frac{\mathcal{L}_{p}[f]\left(x_{1}, x_{2}\right)}{\omega\left(\left|x_{1}-x_{2}\right|\right)}<\infty
	$$
	be the Lipschitz constant and
	\[
	\|f\|_{\Lambda_{\omega, p}(\mathbb{S}^{n-1})}:=\sup_{x\in \mathbb{S}^{n-1}}|f(x)|+\|f\|_{\Lambda_{\omega,p}(\mathbb{S}^{n-1}),s}.
	\]

	\subsection*{Hardy-Littlewood type phenomenon for the M\"obius invariant Laplacian operator}
Before presenting our main results, we first recall the classical Hardy-Littlewood theorem for complex-valued harmonic functions (see \cite{HL31,HL32}).

\begin{Thm}\label{Thm-A}
If $\varphi \in \Lambda_{\omega_{\beta}, \infty}(\mathbb{T})$, then $P[\varphi] \in \Lambda_{\omega_{\beta}, \infty}(\overline{\mathbb{D}})$, where $\beta \in (0,1)$ and $\omega_{\beta}(t) = t^{\beta}$ for $t \geqslant 0$.
\end{Thm}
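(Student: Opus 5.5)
The plan is the classical two-step argument: first a pointwise gradient estimate for $u=P[\varphi]$, then integration along suitable paths, the usual route to the Hardy--Littlewood theorem. Write $u(z)=P[\varphi](z)=\frac{1}{2\pi}\int_0^{2\pi}P(z,e^{it})\varphi(e^{it})\,dt$. Since $\int P(z,e^{it})\,dt=2\pi$, every derivative of the kernel integrates to zero in $t$. Hence for $z=re^{i\theta}$ we may subtract the constant $\varphi(e^{i\theta})$ and get
$$|\nabla u(z)|\le \frac{1}{2\pi}\int_0^{2\pi}|\nabla_z P(z,e^{it})|\,|\varphi(e^{it})-\varphi(e^{i\theta})|\,dt.$$

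\textbf{Step 1: the gradient estimate.} We use $|\nabla_zP(z,e^{it})|\le C/|z-e^{it}|^{2}$ and $|z-e^{it}|\ge c\big((1-r)+|t-\theta|\big)$. Together with $|\varphi(e^{it})-\varphi(e^{i\theta})|\le \|\varphi\|\,|t-\theta|^{\beta}$, this gives
$$|\nabla u(z)|\le C\|\varphi\|\int_{-\pi}^{\pi}\frac{|s|^{\beta}}{\big((1-r)+|s|\big)^{2}}\,ds\le C\|\varphi\|(1-r)^{\beta-1}.$$
The last integral is computed by splitting at $|s|=1-r$. It converges precisely because $\beta<1$: the tail contributes $\int_{1-r}^\pi s^{\beta-2}\,ds\asymp (1-r)^{\beta-1}$.

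\textbf{Step 2: integrating along paths.} Take $z_1,z_2\in\mathbb{D}$ with $\delta=|z_1-z_2|$, and write $z_j=r_je^{i\theta_j}$.

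If $\max(1-r_1,1-r_2)\ge\delta$, the segment $[z_1,z_2]$ stays at distance $\gtrsim\delta$ from $\mathbb{T}$. Then Step 1 gives $|u(z_1)-u(z_2)|\le C\delta\cdot\delta^{\beta-1}=C\delta^\beta$.

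Otherwise both points lie within $\delta$ of the circle. In that case set $\rho=1-\delta$ (assume $\delta<1$; the case of large $\delta$ is trivial since $u$ is bounded by $\|\varphi\|_\infty$). Move radially from $z_j$ to $w_j=\rho e^{i\theta_j}$; the cost is bounded by
$$\int_{1-\delta}^{r_j}C(1-s)^{\beta-1}\,ds\le C\delta^\beta.$$
Then join $w_1$ to $w_2$ along the circle $|w|=\rho$. This arc has length $\lesssim\delta$ and lies where $|\nabla u|\le C\delta^{\beta-1}$, so it also costs $C\delta^\beta$.

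Altogether $|u(z_1)-u(z_2)|\le C\|\varphi\|\,|z_1-z_2|^\beta$ on $\mathbb{D}$. Extension to $\overline{\mathbb{D}}$ follows from the continuity of $u$ up to the boundary, where $u=\varphi$.

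\textbf{Main obstacle.} The only delicate point is the kernel integral in Step 1. There the restriction $\beta<1$ is essential: for $\beta=1$ one picks up a logarithm, and Lipschitz boundary data do not give a Lipschitz extension. The remaining steps are bookkeeping with the elementary bound $|z-e^{it}|\gtrsim (1-r)+|t-\theta|$.
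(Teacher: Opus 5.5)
Step 1 is correct, and so is your second case of Step 2 (radial segments down to radius $1-\delta$, then an arc). The first case of Step 2, however, rests on a false claim. The hypothesis $\max(1-r_1,1-r_2)\ge\delta$ controls only one endpoint. The other endpoint lies on the segment $[z_1,z_2]$ and may be arbitrarily close to $\mathbb{T}$.

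For example, $z_1=1-\epsilon-\delta$ and $z_2=1-\epsilon$ satisfy $|z_1-z_2|=\delta$ and $1-r_1=\delta+\epsilon\ge\delta$. Yet the segment comes within $\epsilon$ of $\mathbb{T}$, where Step 1 only gives $|\nabla u|\le C\epsilon^{\beta-1}$. So the bound ``length times $C\delta^{\beta-1}$'' is not justified.

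The conclusion of that case is still true, and the repair is short. Put $w(t)=(1-t)z_1+tz_2$ and suppose $1-r_1\ge\delta$. The triangle inequality gives $1-|w(t)|\ge (1-t)(1-r_1)+t(1-r_2)\ge (1-t)\delta$, hence
\[
|u(z_1)-u(z_2)|\le C\|\varphi\|\,\delta\int_0^1\big((1-t)\delta\big)^{\beta-1}\,dt=\frac{C}{\beta}\,\|\varphi\|\,\delta^{\beta},
\]
using the integrability of $(1-t)^{\beta-1}$. Alternatively, split according to whether $\min(1-r_1,1-r_2)\ge\delta$. In that case the same convexity shows the segment really stays at distance $\ge\delta$ from $\mathbb{T}$. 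In the complementary case, $|r_1-r_2|\le\delta$ forces $1-r_j<2\delta$ for both $j$, and your radial-plus-arc argument runs at radius $1-2\delta$.

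For comparison, the paper does not prove this statement; it quotes it as the classical theorem. It is the special case $n=2$, $\alpha=0$, $p=\infty$, $\omega=\omega_\beta$ of Theorem \ref{thm-1.1}, since $\delta\int_\delta^\pi t^{\beta-2}\,dt\le \delta^\beta/(1-\beta)$ gives \eqref{eq-CCHL-03}. The paper's own argument in Lemmas \ref{lem-1.1}--\ref{lem-1.3} (written for $p<\infty$, with $p=\infty$ delegated to Theorem D) treats points near the boundary differently from you.

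Instead of integrating $|\nabla u|$ radially toward the circle, it compares $u(r\xi)$ directly with $\varphi(\xi)$ via $|P_\alpha[\varphi](r\xi)-\varphi(\xi)|\le C\omega(1-r)$ (Lemma \ref{lem-1.1}). It then routes the comparison through boundary points, and integrates the gradient only when $|x-y|<1-|x|$. Your route uses $\int_0^\delta t^{\beta-1}\,dt\le C\delta^\beta$, a ``fast'' condition on the majorant, which costs nothing for $\omega_\beta$. For general majorants it would require $\omega$ to be regular, as in Theorem B. The paper's route needs only the slow-type condition \eqref{eq-CCHL-03}.
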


Nolder and Oberlin later generalized Theorem \ref{Thm-A} by establishing a Hardy-Littlewood theorem for differentiable majorants (see \cite[Lemma 3.2]{NO88}). Subsequently, Dyakonov further extended Theorem \ref{Thm-A} to complex-valued harmonic functions on $\mathbb{D}$ as follows.

\begin{Thm}{\rm (\cite[Lemma 4]{DK1997})}\label{Thm-B}
Let $\omega$ be a regular majorant. If $\varphi \in \Lambda_{\omega, \infty}(\mathbb{T})$, then $P[\varphi] \in \Lambda_{\omega, \infty}(\overline{\mathbb{D}})$.
\end{Thm}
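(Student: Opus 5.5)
The plan is to split the estimate of $|P[\varphi](z_1)-P[\varphi](z_2)|$ into two parts: the passage from the boundary to the interior in the radial direction, which uses the fast condition, and control of the gradient, which uses the slow condition. Write $u=P[\varphi]$ and normalize $\|\varphi\|_{\Lambda_{\omega,\infty}(\mathbb{T}),s}\le 1$.

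\emph{Step 1 (boundary-to-interior estimate).} For $z=r\zeta$ with $\zeta\in\mathbb{T}$, use $\int_{\mathbb{T}}P(z,\eta)\,d\sigma(\eta)=1$ to write
\[
|u(z)-\varphi(\zeta)|\le\int_{\mathbb{T}}P(z,\eta)\,\omega(|\eta-\zeta|)\,d\sigma(\eta).
\]
We have $P(z,\eta)\lesssim (1-r)/(1-r+|\eta-\zeta|)^{2}$. Split the integral into the regions $|\eta-\zeta|<1-r$ and $|\eta-\zeta|\ge 1-r$.
\begin{itemize}
\item On the near region the contribution is $\lesssim\omega(1-r)$.
\item On the far region the contribution is $\lesssim (1-r)\int_{1-r}^{2}\omega(t)t^{-2}\,dt\lesssim\omega(1-r)$, by the slow condition.
\end{itemize}
So $|u(r\zeta)-\varphi(\zeta)|\le C\omega(1-r)$.

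\emph{Step 2 (gradient estimate).} Differentiate under the integral and subtract $\varphi(\zeta)$, which is allowed because $\int_{\mathbb{T}}\nabla_z P(z,\eta)\,d\sigma(\eta)=0$. Since $|\nabla_z P(z,\eta)|\lesssim (1-r+|\eta-\zeta|)^{-2}$, the same splitting gives
\[
|\nabla u(z)|\lesssim \frac{\omega(1-r)}{1-r}+\int_{1-r}^{2}\frac{\omega(t)}{t^{2}}\,dt\lesssim\frac{\omega(1-|z|)}{1-|z|},
\]
again by the slow condition. This is the key step.

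\emph{Step 3 (assembling the estimate).} Take $z_1,z_2\in\mathbb{D}$ with $\delta=|z_1-z_2|$ small.
\begin{itemize}
\item \textbf{Case $\delta\le\max(1-|z_1|,1-|z_2|)$.} Integrate $\nabla u$ along the segment $[z_1,z_2]$. The distance to the boundary along the segment is comparable to $\max(1-|z_1|,1-|z_2|)\ge\delta$ up to an additive $\delta$, and $\omega(t)/t$ is non-increasing. Hence $|u(z_1)-u(z_2)|\lesssim \delta\,\omega(\delta)/\delta=\omega(\delta)$. In the subcase where one endpoint is much closer to the boundary than $\delta$, route instead as in the next case.
\item \textbf{Case $1-|z_j|<\delta$ for both $j$.} Move radially from $z_j$ to $w_j=(1-\delta)z_j/|z_j|$, so that $|w_1-w_2|\lesssim\delta$ and $1-|w_j|=\delta$. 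The middle leg $w_1\to w_2$ is handled by the first case. For the radial legs, integrate the gradient bound:
\[
\int_{1-\delta}^{|z_j|}\frac{\omega(1-s)}{1-s}\,ds\le\int_0^{\delta}\frac{\omega(t)}{t}\,dt\lesssim\omega(\delta)
\]
by the fast condition.
\end{itemize}
Boundary points are handled by continuity, using Step 1. For large $\delta$, boundedness of $u$ by the maximum principle suffices, since $\omega(\delta)\ge\omega(\delta_0)>0$.

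The main obstacle is Step 2. The derivative kernel must be estimated uniformly, and the cancellation has to be used correctly so that the slow condition, rather than a cruder Lipschitz bound, controls the far part. A minor bookkeeping issue is that the fast and slow conditions hold only for $\delta<\delta_0$; this is resolved by treating large distances separately as above.
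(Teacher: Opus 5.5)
Your proof is correct, and its analytic core is the paper's. The paper only quotes Theorem~\ref{Thm-B}, but its proof of the generalization, Theorem~\ref{thm-1.1} (take $n=2$, $\alpha=0$), rests on your Step~1 (Lemma~\ref{lem-1.1}) and your Step~2 (Lemma~\ref{thm-1b}). Both use only \eqref{eq-CCHL-03}, which for $\alpha=0$ is the slow condition.

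The difference is in the assembly. You integrate the gradient along radial legs from $z_j$ down to depth $\delta$. That is the only place the fast condition $\int_0^\delta\omega(t)\,dt/t\lesssim\omega(\delta)$ enters. The paper splits $x\to y$ through $z=\frac{|x|}{|y|}y$ in Lemma~\ref{lem-CCHL-1}, into a radial piece (Lemma~\ref{lem-1.2}) and an equal-modulus piece (Lemma~\ref{lem-1.3}). It integrates the gradient only when the points lie at least their mutual distance from the boundary. Otherwise it jumps from each point straight to its radial projection on the sphere via the boundary estimate.

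You can do the same with your own Step~1, which also makes your loosely stated subcase of Case~1 precise:
\begin{itemize}
\item If $\min_j(1-|z_j|)\ge\delta/2$, the segment $[z_1,z_2]$ stays in $\{1-|z|\ge\delta/2\}$ by convexity of $|z|$. Step~2 then gives $|u(z_1)-u(z_2)|\lesssim\delta\,\omega(\delta/2)/(\delta/2)\lesssim\omega(\delta)$.
\item Otherwise $1-|z_j|<3\delta/2$ for both $j$. With $\zeta_j=z_j/|z_j|$ one has $|\zeta_1-\zeta_2|<3\delta$, and
\[
|u(z_1)-u(z_2)|\le\sum_{j=1}^{2}|u(z_j)-\varphi(\zeta_j)|+|\varphi(\zeta_1)-\varphi(\zeta_2)|\lesssim\omega(\delta).
\]
\end{itemize}

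So your argument is valid under the stated hypothesis, but it spends the fast half of regularity needlessly. The paper's assembly shows the conclusion holds for every slow majorant, consistent with Theorem~\ref{Thm-C2}. By Theorem~\ref{thm-1.1} with $p=\infty$, condition \eqref{eq-CCHL-03} with $\alpha=0$ is also necessary.

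Note too that your opening sentence credits the boundary-to-interior passage to the fast condition. Your Step~1 actually carries it out with the slow one, which is exactly why the fast condition is dispensable.
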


In a similar spirit to Theorem \ref{Thm-B}, Pavlovi\'c established the following Hardy-Littlewood theorem for harmonic functions on the unit ball $\mathbb{B}^{n}$.

\begin{Thm}{\rm (\cite[Theorem 1]{Pav})}\label{Thm-C}
Let $\omega$ be a slow majorant, and let $u$ be a harmonic function from ${\mathbb{B}^{n}}$ into $\mathbb{R}$
which is continuous on $\overline{\mathbb{B}^{n}}$. If $|u| \in \Lambda_{\omega, \infty}(\mathbb{S}^{n-1})$, then $u \in \Lambda_{\omega, \infty}(\mathbb{B}^{n})$.
\end{Thm}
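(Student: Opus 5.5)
The plan is Pavlović's reduction to a two-point estimate: first reach $u\in\Lambda_{\omega,\infty}(\mathbb{B}^n)$ by proving a gradient bound of the form
\[
|\nabla u(x)|\le C\,\frac{\omega(1-|x|)}{1-|x|},\qquad x\in\mathbb{B}^n .
\]
For a slow majorant, such a bound integrates to the Lipschitz condition. Given $x,y\in\mathbb{B}^n$ with $|x-y|=\delta$, we join them by the standard path: move radially inward from $x$ to depth $1-|x|\approx\delta$, go along a short arc or segment at that depth, then move radially out to $y$. If both points are already deeper than $\delta$, we use the segment directly. Along the radial pieces, $\int_0^{\delta}\omega(t)/t\,dt$ would require fastness. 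Instead we argue as Pavlović does: for radial increments we compare $u(r\zeta)$ with $u(\zeta)$ directly, and only the tangential and deep pieces use the gradient bound, where $\omega(\delta)/\delta$ times length $\delta$ gives $\omega(\delta)$. So I would prove two estimates: (a) $|u(\zeta)-u(r\zeta)|\le C\omega(1-r)$, and (b) the gradient bound.

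For (a), write $u(x)-u(\zeta)=\int P(x,\eta)\,(u(\eta)-u(\zeta))\,d\sigma(\eta)$ with $x=r\zeta$. The key trick for the hypothesis involving only $|u|$ is to reduce to nonnegative data. Fix $\zeta$, let $c=u(\zeta)$, and assume $c\ge0$ (otherwise replace $u$ by $-u$). Then
\[
|u(\eta)-c|\le \bigl||u(\eta)|-c\bigr| + 2u^-(\eta),
\]
and $u^-(\eta)\le \bigl||u(\eta)|-|u(\zeta)|\bigr|$ because $u(\zeta)\ge0$. Hence
\[
\bigl|u(\eta)-u(\zeta)\bigr|\le 3\,\bigl||u|(\eta)-|u|(\zeta)\bigr|\le 3C\omega(|\eta-\zeta|).
\]
Thus $|u(r\zeta)-u(\zeta)|\le C\int P(r\zeta,\eta)\,\omega(|\eta-\zeta|)\,d\sigma(\eta)$. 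Splitting the sphere into the cap $|\eta-\zeta|<1-r$ and dyadic shells, and using $P\lesssim (1-r)/|\eta-\zeta|^{n}$, this is bounded by $C\bigl(\omega(1-r)+(1-r)\int_{1-r}^2\omega(t)t^{-2}\,dt\bigr)\le C\omega(1-r)$, by slowness.

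For (b), the same oscillation bound gives $|u(\eta)-u(\zeta)|\le C\omega(|\eta-\zeta|)$ for all boundary points, with $\zeta=x/|x|$. Differentiating the Poisson integral, $|\nabla_x P(x,\eta)|\lesssim |x-\eta|^{-n}$. Subtracting the constant $u(\zeta)$, which is allowed because $\int\nabla_x P\,d\sigma=0$, and using the same dyadic splitting gives
\[
|\nabla u(x)|\le C\int \frac{\omega(|\eta-\zeta|)}{|x-\eta|^{n}}\,d\sigma(\eta)\le C\,\frac{\omega(1-|x|)}{1-|x|},
\]
again by slowness and by $\omega(t)/t$ being nonincreasing. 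Assembling the path argument then yields $|u(x)-u(y)|\le C\omega(|x-y|)$.

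The main obstacle is the passage from $|u|$ to $u$. The pointwise inequality above needs the sign normalization at the base point $\zeta$, and that normalization must be chosen locally. For the two-point estimate between interior points $x,y$, we use the boundary base points $x/|x|$ and $y/|y|$ separately and link them via (a) and the boundary oscillation bound. This is where I would be most careful. A first fix, if something fails, is to note that $u$ itself lies in $\Lambda_\omega(\mathbb{S}^{n-1})$ up to a constant factor. Indeed, $|u(\xi)-u(\zeta)|\le C\omega(|\xi-\zeta|)$ when $u(\xi)$ and $u(\zeta)$ have the same sign. When the signs are opposite, continuity gives a point $\eta$ on a geodesic between them with $u(\eta)=0$, and then $|u(\xi)-u(\zeta)|\le|u(\xi)|+|u(\zeta)|\le 2C\omega(|\xi-\zeta|)$. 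After that, the classical harmonic Hardy–Littlewood argument for slow majorants applies verbatim.
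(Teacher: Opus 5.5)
Your ``key trick'' is false and should be deleted. Take $c=u(\zeta)\ge0$ and $u(\eta)=-c<0$. Then $u^-(\eta)=c$ but $\bigl||u(\eta)|-|u(\zeta)|\bigr|=0$, so $u^-(\eta)\le\bigl||u|(\eta)-|u|(\zeta)\bigr|$ fails. So does $|u(\eta)-u(\zeta)|\le 3\bigl||u|(\eta)-|u|(\zeta)\bigr|$: its left side is $2c$ and its right side is $0$. No pointwise comparison of this kind can hold under any sign normalization, because $|u|$ forgets the sign. The passage from $|u|$ to $u$ is necessarily nonlocal.

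Your fallback is the correct argument. If $u(\xi)$ and $u(\zeta)$ have opposite signs, the intermediate value theorem gives a point $\eta$ on the minor great-circle arc from $\xi$ to $\zeta$ with $u(\eta)=0$. Chordal distance from either endpoint increases along that arc, so $|\xi-\eta|\le|\xi-\zeta|$ and $|\zeta-\eta|\le|\xi-\zeta|$. Since $\omega$ is nondecreasing, $|u(\xi)-u(\zeta)|\le|u(\xi)|+|u(\zeta)|\le 2C\omega(|\xi-\zeta|)$. Make this the main line. Then (a) and (b) go through unchanged, using $|u(\eta)-u(\zeta)|\le 2C\omega(|\eta-\zeta|)$ in place of the faulty bound.

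With that replacement your proof is correct and is essentially the route the paper indicates. The paper does not reprove Theorem~\ref{Thm-C}; it quotes it from \cite{Pav}. It observes, via the argument of \cite[Lemma 3.1]{DK06} (which is your fallback), that the hypothesis on $|u|$ may be replaced by $u\in\Lambda_{\omega,\infty}(\mathbb{S}^{n-1})$. After that, the statement is the harmonic case $\alpha=0$, $p=\infty$ of Theorem~\ref{thm-1.1}, because slowness gives \eqref{eq-CCHL-03} with $\alpha=0$.

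The machinery behind that case matches yours:
\begin{itemize}
\item the radial estimate $|P[\varphi](r\xi)-\varphi(\xi)|\lesssim\omega(1-r)$ (cf.\ Lemma~\ref{lem-1.1});
\item the gradient bound $|\nabla P[\varphi](x)|\lesssim\omega(1-|x|)/(1-|x|)$ (Lemma~\ref{Lem-I}, Lemma~\ref{thm-1b});
\item the split according to whether $|x-y|$ is small or large relative to $1-|x|$ (Lemmas~\ref{lem-1.2}--\ref{lem-CCHL-1}, and Cases~1--2 in the proof of Theorem~\ref{thm-1.2}).
\end{itemize}
Your dyadic-shell computations are the $\alpha=0$ instances of the estimates for $J_1$ and $J_2$ that the paper takes from \cite{chen24}.
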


In view of the proof of  \cite[Lemma 3.1]{DK06}, a function $u$ belongs to $\Lambda_{\omega, \infty}(\mathbb{S}^{n-1})$ if and only if $|u|$ also belongs to this space. Therefore, the condition ``$|u| \in \Lambda_{\omega, \infty}(\mathbb{S}^{n-1})$'' in Theorem C can be replaced by ``$u \in \Lambda_{\omega, \infty}(\mathbb{S}^{n-1})$'' 
as follows.

\begin{Thm}\label{Thm-C2}
Let $\omega$ be a slow majorant, and let $u$ be a harmonic function from ${\mathbb{B}^{n}}$ into $\mathbb{R}$
which is continuous on $\overline{\mathbb{B}^{n}}$. If $u\in \Lambda_{\omega, \infty}(\mathbb{S}^{n-1})$, then $u \in \Lambda_{\omega, \infty}(\mathbb{B}^{n})$.
\end{Thm}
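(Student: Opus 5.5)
The plan is to reduce Theorem \ref{Thm-C2} to Theorem \ref{Thm-C}. By Theorem \ref{Thm-C} it suffices to show that $u\in\Lambda_{\omega,\infty}(\mathbb{S}^{n-1})$ implies $|u|\in\Lambda_{\omega,\infty}(\mathbb{S}^{n-1})$. This is immediate from the reverse triangle inequality: for $\xi_1,\xi_2\in\mathbb{S}^{n-1}$,
$$\bigl||u(\xi_1)|-|u(\xi_2)|\bigr|\le |u(\xi_1)-u(\xi_2)|\le \|u\|_{\Lambda_{\omega,\infty}(\mathbb{S}^{n-1}),s}\,\omega(|\xi_1-\xi_2|).$$
Hence $|u|\in\Lambda_{\omega,\infty}(\mathbb{S}^{n-1})$, with Lipschitz constant at most that of $u$. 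The remaining hypotheses of Theorem \ref{Thm-C} (that $u$ is harmonic in $\mathbb{B}^n$, continuous on $\overline{\mathbb{B}^n}$, and that $\omega$ is a slow majorant) are unchanged. Applying Theorem \ref{Thm-C} therefore gives $u\in\Lambda_{\omega,\infty}(\mathbb{B}^n)$.

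The direction quoted from \cite[Lemma 3.1]{DK06}, namely that $|u|\in\Lambda_\omega$ implies $u\in\Lambda_\omega$, is the nontrivial one, but it is not needed here. Only the trivial direction $u\Rightarrow|u|$ enters the reduction, so I expect no real obstacle.

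If one preferred a self-contained argument, the plan would instead follow Pavlovi\'c's scheme in two steps. First, for $x\in\mathbb{B}^n$ with $|x|=r$ and $\zeta=x/|x|$, use the Poisson representation together with the slow condition to get the radial bound $|u(x)-u(\zeta)|\le C\omega(1-r)$. Second, combine this with a gradient estimate $|\nabla u(x)|\le C\omega(1-|x|)/(1-|x|)$, obtained by differentiating the Poisson kernel and subtracting $u(\zeta)$. These two bounds are then joined by the standard path argument: move the two points inward to radius $1-|x_1-x_2|$, use the gradient bound along the connecting arc, and use the radial bound on the radial segments.

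In that route, the main difficulty would be the gradient estimate, where the slow condition $\delta\int_\delta^\infty\omega(t)t^{-2}\,dt\le C\omega(\delta)$ is exactly what controls the tail of the kernel integral. Since Theorem \ref{Thm-C} is available, however, the one-line reduction above is the proof I would give.
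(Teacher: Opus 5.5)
Your reduction is correct and is the paper's argument: Theorem~\ref{Thm-C2} is deduced from Theorem~\ref{Thm-C} by passing from $u\in\Lambda_{\omega,\infty}(\mathbb{S}^{n-1})$ to $|u|\in\Lambda_{\omega,\infty}(\mathbb{S}^{n-1})$. You are also right that only this trivial direction (the reverse triangle inequality) is needed, whereas the paper cites the full two-sided equivalence from \cite[Lemma 3.1]{DK06}.
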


Recently, Chen et al. generalized Theorem \ref{Thm-B} to $\alpha$-harmonic functions on $\mathbb{B}^n$ as follows.

\begin{Thm}{\rm (\cite[Theorems 2.1 and 2.3]{chen24})}\label{Thm-D}
Let $n \geq 2$, $\alpha \in (-1, \infty)$, and $\omega$ be a majorant. Then the following statements are equivalent:
\begin{enumerate}
\item[{\rm ($\mathcal{A}{1}$)}] If $\varphi \in \Lambda_{\omega, \infty}(\mathbb{S}^{n-1})$, then $P_{\alpha}[\varphi] \in \Lambda_{\omega, \infty}(\overline{\mathbb{B}^{n}})$.
\item[{\rm ($\mathcal{A}{2}$)}] There exists a positive constant $C$ such that for all $\delta \in (0, \pi]$,
\be\label{eq-CCHL-02}
\delta^{1+\alpha} \int_{\delta}^{\pi} \frac{\omega(t)\sin^{n-2}t}{t^{n+\alpha}} dt \leqslant C \omega(\delta).
\ee
\end{enumerate}
\end{Thm}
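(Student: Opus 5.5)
We have to prove both directions of Theorem \ref{Thm-D}. For $(\mathcal{A}2)\Rightarrow(\mathcal{A}1)$, fix $\varphi\in\Lambda_{\omega,\infty}(\mathbb{S}^{n-1})$ and set $u=P_\alpha[\varphi]$. The plan is to prove the boundary-to-interior gradient estimate
\[
|\nabla u(x)|\le C\,\frac{\omega(1-|x|)}{1-|x|},\qquad x\in\mathbb{B}^n ,
\]
together with the radial estimate $|u(r\zeta)-\varphi(\zeta)|\le C\omega(1-r)$. The standard Dyakonov-type argument then gives $u\in\Lambda_{\omega,\infty}(\overline{\mathbb{B}^n})$: integrate along a path that goes radially inward to depth $\approx|x-y|$, then tangentially, then back out, and use that $\omega(t)/t$ is non-increasing.

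For the radial estimate, write $x=r\eta$ and
\[
u(x)-\varphi(\eta)=\int_{\mathbb{S}^{n-1}}P_\alpha(x,\zeta)\bigl(\varphi(\zeta)-\varphi(\eta)\bigr)\,d\sigma(\zeta)+\varphi(\eta)\Bigl(\int P_\alpha(x,\cdot)\,d\sigma-1\Bigr).
\]
The second term needs care when $\alpha\neq0$, since $P_\alpha[1]\neq1$ in general. One must check that $1-P_\alpha[1](x)=O(1-|x|)$, which is fine because $P_\alpha[1]$ is a smooth radial hypergeometric function equal to $1$ on the sphere, and that $t\le C\omega(t)$, which follows from $\omega(t)/t$ being non-increasing on a bounded range.

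For the first term, bound $|\varphi(\zeta)-\varphi(\eta)|\le C\omega(|\zeta-\eta|)$ and pass to polar coordinates on the sphere around $\eta$, with geodesic angle $t$ and $|\zeta-\eta|\approx t$. This gives
\[
\int_0^\pi \frac{(1-r)^{1+\alpha}\,\omega(t)\,\sin^{n-2}t}{\bigl((1-r)^2+t^2\bigr)^{(n+\alpha)/2}}\,dt .
\]
Split at $t=\delta=1-r$. On $t<\delta$, use $\omega(t)\le\omega(\delta)$ and the integral of $t^{n-2}$ to get $C\omega(\delta)$. On $t>\delta$, the integral is exactly the left side of \eqref{eq-CCHL-02}.

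The gradient estimate is handled the same way. Differentiate the kernel: $|\nabla_x P_\alpha(x,\zeta)|\lesssim P_\alpha(x,\zeta)\bigl(\frac1{1-|x|}+\frac1{|x-\zeta|}\bigr)\lesssim P_\alpha/(1-|x|)$. Subtract the constant $\varphi(\eta)$, with the correction term $\varphi(\eta)\nabla P_\alpha[1]$ bounded since $P_\alpha[1]$ is smooth up to the boundary. Then repeat the split.

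For $(\mathcal{A}1)\Rightarrow(\mathcal{A}2)$, test with the zonal function $\varphi(\zeta)=\omega(|\zeta-e_n|)$, which lies in $\Lambda_{\omega,\infty}(\mathbb{S}^{n-1})$ by subadditivity of $\omega$ (a consequence of $\omega(t)/t$ non-increasing). Take $x=(1-\delta)e_n$. The hypothesis gives $|u(x)-\varphi(e_n)|=u(x)\le C\omega(\delta)$. Since $\varphi\ge0$, keep only the region $t\ge\delta$, where $P_\alpha(x,\zeta)\gtrsim\delta^{1+\alpha}/t^{n+\alpha}$. This yields
\[
\delta^{1+\alpha}\int_\delta^\pi\frac{\omega(t)\sin^{n-2}t}{t^{n+\alpha}}\,dt\le C\,u(x)\le C\omega(\delta)
\]
for small $\delta$. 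For $\delta$ bounded below, the inequality is trivial because $\omega(\delta)\ge\omega(\delta_0)>0$.

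I expect the main obstacle to be the $\alpha\neq0$ normalization: $P_\alpha[1]$ is not identically $1$. The crucial point is the boundary behaviour of $P_\alpha[1]$ and its gradient, namely Lipschitz regularity up to $\mathbb{S}^{n-1}$. I would obtain this from its explicit hypergeometric form $F(\cdot,\cdot;\cdot;|x|^2)$, which is analytic at $|x|=1$ when $\alpha>-1$.
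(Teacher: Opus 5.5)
Your overall plan matches the argument the paper relies on, namely the proof of \cite{chen24} that it quotes in Lemmas \ref{lem-1.1}, \ref{thm-1b} and \ref{lem-omega}. You split the kernel integral at $t=1-|x|$, prove $|\nabla P_\alpha[\varphi](x)|\lesssim\omega(1-|x|)/(1-|x|)$ and the radial estimate, combine them, and test necessity with a zonal function on the axis. Your necessity argument is fine. So is your treatment of $\int P_\alpha(x,\zeta)(\varphi(\zeta)-\varphi(\eta))\,d\sigma(\zeta)$; the bound $|\nabla_x P_\alpha(x,\zeta)|\lesssim P_\alpha(x,\zeta)/(1-|x|)$ is a nice shortcut that makes the separate $J_2$ estimate unnecessary.

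\textbf{The gap.} It is in the step you flagged yourself. The claim that $P_\alpha[1]$ is Lipschitz (let alone analytic) up to $\mathbb{S}^{n-1}$, with $1-P_\alpha[1](x)=O(1-|x|)$ and $\nabla P_\alpha[1]$ bounded, is false for $\alpha\in(-1,0)$. One has
\[
P_\alpha[1](x)=C_{n,\alpha}\,F\bigl(a,b;c;|x|^2\bigr),\qquad a=-\tfrac{\alpha}{2},\quad b=\tfrac{n-2-\alpha}{2},\quad c=\tfrac{n}{2}.
\]
The condition $c-a-b=1+\alpha>0$ only gives convergence at $|x|=1$ (with value $1$); $z=1$ is a regular singular point, not a point of analyticity. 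The derivative $\frac{ab}{c}F(a+1,b+1;c+1;z)$ has excess $c-a-b-1=\alpha<0$. For $\alpha\in(-1,0)$ we have $a,b>0$, so as $|x|\to1$,
\[
|\nabla P_\alpha[1](x)|\asymp(1-|x|)^{\alpha}\to\infty,\qquad 1-P_\alpha[1](x)\asymp(1-|x|)^{1+\alpha}.
\]
A quick consistency check: $\varphi\equiv1$ lies in every $\Lambda_{\omega,\infty}(\mathbb{S}^{n-1})$, so $(\mathcal{A}1)$ itself forces $(1-|x|)^{1+\alpha}\lesssim\omega(1-|x|)$. This fails for $\omega(t)=t^{\beta}$ with $1+\alpha<\beta\le1$. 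So the boundary defect of $P_\alpha[1]$ is something the condition on $\omega$ must absorb, not a free fact. As written, your estimates of the terms $\varphi(\eta)(P_\alpha[1](x)-1)$ and $\varphi(\eta)\nabla P_\alpha[1](x)$ are unjustified.

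\textbf{The fix.} The missing idea is that $(\mathcal{A}2)$ supplies exactly what is needed. For $0<\lambda\le c<\pi$, since the integrand is nonnegative,
\[
\lambda^{1+\alpha}\int_c^\pi\frac{\omega(t)\sin^{n-2}t}{t^{n+\alpha}}\,dt\le\lambda^{1+\alpha}\int_\lambda^\pi\frac{\omega(t)\sin^{n-2}t}{t^{n+\alpha}}\,dt\le C\omega(\lambda).
\]
Hence $\omega(\lambda)\gtrsim\lambda^{1+\alpha}$ near $0$; this is \eqref{eq-inf} in the paper. Consequently
\[
|1-P_\alpha[1](x)|\lesssim(1-|x|)^{1+\alpha}\lesssim\omega(1-|x|),\qquad |\nabla P_\alpha[1](x)|\lesssim(1-|x|)^{\alpha}\lesssim\frac{\omega(1-|x|)}{1-|x|}.
\]
These are exactly the bounds your radial and gradient estimates need. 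This is the second place where the argument the paper relies on uses $(\mathcal{A}2)$, via \cite[Lemma 3.3 and (3.22)--(3.23)]{chen24}. For $\alpha\ge0$ your Lipschitz claim about $P_\alpha[1]$ is correct. With this replacement, the rest of your proof goes through, including the radial-in, tangential, radial-out combination.
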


At the Workshop ``Prospects of Theory of Riemann Surfaces'' held in 2025 at Aichi Institute of Technology,  Makoto Masumoto posed the following question.

\begin{ques}\label{Qes-1}

Can the condition ($\mathcal{A}{2}$) in Theorem \ref{Thm-D} be replaced by a condition independent of the dimension $n$?
\end{ques}

A notable generalization of the Hardy-Littlewood theorem was later obtained by Chen and Hamada for complex-valued harmonic functions $f \in \Lambda_{\omega, p}(\mathbb{D})$, which they stated as follows.

\begin{Thm}{\rm (\cite[Theorem 2.11]{chen24B})}\label{Thm-E}
Let $\omega$ be a majorant and $p \in [1, \infty]$.
\begin{enumerate}
\item[{\rm ($\mathcal{B}{1}$)}] If $\varphi \in \Lambda_{\omega, p}(\mathbb{T})$ and $\varphi$ is continuous on $\mathbb{T}$, then $P[\varphi] \in \Lambda_{\omega, p}(\overline{\mathbb{D}})$.
\item[{\rm ($\mathcal{B}_{2}$)}] There exists a positive constant $C$ such that for all $\delta \in (0, \pi]$,

$$\delta \int_{\delta}^{\pi} \frac{\omega(t)}{t^{2}} d t \leqslant C \omega(\delta).$$
\end{enumerate}
Then, $(\mathcal{B}{2}) \Rightarrow (\mathcal{B}{1})$ for $p \in [1, \infty]$, and $(\mathcal{B}{1}) \Leftrightarrow (\mathcal{B}{2})$ for $p = \infty$.
\end{Thm}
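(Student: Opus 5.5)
The plan treats the two implications separately, working throughout with the Poisson representation $P[\varphi](z)=\int_{\mathbb{T}}P(z,\zeta)\varphi(\zeta)\,d\sigma(\zeta)$ and with the rotation invariance of both the kernel and the measure $d\mu$ on $SO(2)$.

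\emph{Proof of $(\mathcal{B}_2)\Rightarrow(\mathcal{B}_1)$ for $p\in[1,\infty]$.} For $z_1,z_2\in\overline{\mathbb{D}}$ set $u=P[\varphi]$.

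\begin{enumerate}
\item \emph{Rotate the data.} Since $P(Rz,\zeta)=P(z,R^{-1}\zeta)$, we have $u(Rz)=P[\varphi\circ R](z)$. Hence
$$u(Rz_1)-u(Rz_2)=P[\varphi_R](z_1)-P[\varphi_R](z_2),\qquad \varphi_R:=\varphi\circ R .$$
\item \emph{Apply Minkowski.} Because $p\ge1$, we may take the $L^p(d\mu(R))$ norm inside the integrals over $\mathbb{T}$.
\item \emph{Radial reduction.} First treat $z_1=r\xi$, $z_2=\xi$ with $\xi\in\mathbb{T}$. Then
$$u(Rr\xi)-u(R\xi)=\int_{\mathbb{T}}P(r\xi,\eta)\bigl(\varphi(R\eta)-\varphi(R\xi)\bigr)\,d\sigma(\eta).$$
Minkowski gives
$$\mathcal{L}_p[u](r\xi,\xi)\le \int_{\mathbb{T}} P(r\xi,\eta)\,\mathcal{L}_p[\varphi](\eta,\xi)\,d\sigma(\eta)\le C\int_{\mathbb{T}} P(r\xi,\eta)\,\omega(|\eta-\xi|)\,d\sigma(\eta).$$
The standard estimate $P(r\xi,\eta)\lesssim (1-r)/((1-r)^2+t^2)$, where $t$ is the arc distance, splits this integral at $t=1-r$. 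The part $t<1-r$ is bounded by $\omega(1-r)$ directly. The part $t>1-r$ is bounded by $(1-r)\int_{1-r}^{\pi}\omega(t)t^{-2}\,dt$, which $(\mathcal{B}_2)$ controls by $C\omega(1-r)$.
\item \emph{Points at the same radius.} For $z_1=r\xi_1$, $z_2=r\xi_2$, note that $u_r(\xi):=u(r\xi)=P[\varphi](r\xi)$ is a convolution of $\varphi$ with a probability kernel. Minkowski then gives $\mathcal{L}_p[u](r\xi_1,r\xi_2)\le \sup_{\eta}\mathcal{L}_p[\varphi](\ldots)$. More precisely, after rotating the kernel variable, the relevant differences are $\varphi(R\rho\xi_1)-\varphi(R\rho\xi_2)$, and the invariance of $d\mu$ makes their $L^p$ norm equal to $\mathcal{L}_p[\varphi](\xi_1,\xi_2)\le C\omega(|\xi_1-\xi_2|)$. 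Hence this case needs no growth condition.
\item \emph{Triangle inequality for general pairs.} For general $z_1,z_2$, set $\delta=|z_1-z_2|$. If $1-|z_j|\le\delta$, connect $z_1$ to $z_2$ through radial projections to $|z|=\max(|z_1|,|z_2|)$ and use steps 3 and 4, together with the fact that $\omega(ct)\le \max(1,c)\,\omega(t)$. If $1-|z_j|>\delta$, use a gradient bound instead: $\mathcal{L}_p[u](z_1,z_2)\le \delta\sup_{w\in[z_1,z_2]}\|\nabla u(R w)\|_{L^p(\mu)}$. Here $\nabla u(w)=\int \nabla_w P(w,\eta)\bigl(\varphi(\eta)-\varphi(w/|w|)\bigr)\,d\sigma(\eta)$, and the same Minkowski and $(\mathcal{B}_2)$ computation gives $\|\nabla u(R w)\|_{L^p(\mu)}\lesssim \omega(1-|w|)/(1-|w|)$. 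Combining with $\omega(t)/t$ decreasing, $\delta\,\omega(1-|w|)/(1-|w|)\le\omega(\delta)$, which closes this case.
\item \emph{Extension to the closed disk.} Continuity of $\varphi$ ensures $u$ extends continuously to $\overline{\mathbb{D}}$, so the estimates pass to $\overline{\mathbb{D}}$.
\end{enumerate}

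\emph{Proof of $(\mathcal{B}_1)\Rightarrow(\mathcal{B}_2)$ for $p=\infty$.} This is the $n=2$, $\alpha=0$ case of Theorem \ref{Thm-D}. In that case $\sin^{n-2}t\equiv1$ and \eqref{eq-CCHL-02} reads exactly $\delta\int_\delta^\pi \omega(t)t^{-2}\,dt\le C\omega(\delta)$, so we invoke it. If a direct argument is preferred, test with $\varphi(e^{i\theta})=\omega(|\theta|)$, which lies in $\Lambda_{\omega,\infty}(\mathbb{T})$ by subadditivity of $\omega$. Compare $P[\varphi](1-\delta)-\varphi(1)$: this difference is $\gtrsim \delta\int_\delta^\pi\omega(t)t^{-2}\,dt$, while $(\mathcal{B}_1)$ bounds it by $C\omega(\delta)$.

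\emph{Main obstacle.} The delicate point is the interior gradient estimate in the $L^p(\mu)$ sense, and especially the bookkeeping showing that Minkowski's inequality legitimately converts $\varphi(R\eta)-\varphi(Rw/|w|)$ into $\mathcal{L}_p[\varphi]$. The plan handles this by writing everything after the change of variable $\eta\mapsto R\eta$, so that the differences appear in exactly the form to which the definition of $\mathcal{L}_p[\varphi]$ applies.
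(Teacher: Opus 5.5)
Your proposal is correct and follows essentially the paper's route: Theorem E is the case $n=2$, $\alpha=0$ of Theorem \ref{thm-1.1}, where $P[1]\equiv 1$ removes the $I_2$ and $L_2$ terms. The ingredients match the paper's: Minkowski's inequality in $L^p(d\mu)$ for the radial-to-boundary estimate (Lemma \ref{lem-1.1}), the $L^p(d\mu)$ gradient bound (Lemma \ref{thm-1b}), the case split according to $1-|x|$ versus $|x-y|$ together with the radial projection, and Theorem \ref{Thm-D} (or the test function $\omega(|\theta|)$) for the converse when $p=\infty$. The only real difference is your equal-radius step via the rotation--convolution identity on $SO(2)$; the paper instead handles equal radii by passing through the boundary or by the gradient bound (Lemma \ref{lem-1.3}), and in your near-boundary case the convolution step could likewise be replaced by going through the boundary.
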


Given Theorem \ref{Thm-E}, it is natural to ask the following question. However, we are currently unable to prove it here.

\begin{ques}(The Hardy–Littlewood type equivalent problem)\label{Qes-2}
Does $(\mathcal{B}_{1}) \Leftrightarrow (\mathcal{B}_{2})$ hold for all $p \in [1,\infty]$ in Theorem \ref{Thm-E}?
\end{ques}

The first aim of this paper is to present an improved and extended version of Theorems \ref{Thm-A}-\ref{Thm-E}
on ${\mathbb{B}^{n}}$. In particular, we show that condition \eqref{eq-CCHL-02} can be replaced by the inequality
\be\label{eq-CCHL-03}
\delta^{1+\alpha} \int_{\delta}^{\pi} \frac{\omega(t)}{t^{2+\alpha}} dt \leqslant C \omega(\delta),~\delta \in (0, \pi],
\ee
which is independent of $n$. This gives a positive answer to Question \ref{Qes-1}.

    \begin{thm}\label{thm-1.1}
    	Let  $n \geq 2$, $\alpha \in(-1, \infty)$,   $\omega$  be a majorant  and $p \in[1, \infty]$  be a constant.
    	\begin{enumerate}
    	\item[{\rm $\left(\mathcal{C}_{1}\right)$}]
    	  If  $\varphi \in \Lambda_{\omega, p}(\mathbb{S}^{n-1})$  and $\varphi$  is continuous on  $\mathbb{S}^{n-1}$, then  $P_{\alpha}[\varphi] \in \Lambda_{\omega, p}(\overline{\mathbb{B}^{n}})$.
    	\item[{\rm $\left(\mathcal{C}_{2}\right)$}]  There exists a positive constant $C$ such that inequality (\ref{eq-CCHL-03}) holds for all $\delta \in (0, \pi]$.
    	\end{enumerate}
    Then,  $\left(\mathcal{C}_{2}\right) \Rightarrow\left(\mathcal{C}_{1}\right)$  for  $p \in[1, \infty]$, and
    	$\left(\mathcal{C}_{2}\right) \Leftrightarrow\left(\mathcal{C}_{1}\right)$  for  $p=\infty$.
    \end{thm}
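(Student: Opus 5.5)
The plan is to get both implications from the equivalence in Theorem \ref{Thm-D}. The extra tool is a rotation-averaging argument that carries the $p=\infty$ estimate over to $p\in[1,\infty)$.

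\textbf{Step 1: (\ref{eq-CCHL-03}) and (\ref{eq-CCHL-02}) are equivalent.} We compare the two conditions directly.

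\begin{itemize}
\item[(i)] Since $\sin t\le t$, the integrand in (\ref{eq-CCHL-02}) is at most $\omega(t)/t^{2+\alpha}$. Hence (\ref{eq-CCHL-03}) implies (\ref{eq-CCHL-02}) for every $n$.
\item[(ii)] For the converse, split $[\delta,\pi]$ at $\pi/2$.
\item[(iii)] On $[\delta,\pi/2]$ we have $\sin t\ge 2t/\pi$, so that piece of the integral in (\ref{eq-CCHL-03}) is at most $(\pi/2)^{n-2}$ times the corresponding piece of the integral in (\ref{eq-CCHL-02}).
\item[(iv)] On $[\pi/2,\pi]$ the integrand of (\ref{eq-CCHL-03}) is bounded by $C\omega(\pi)$. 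Since $\omega$ is increasing and $\omega(t)/t$ is non-increasing, we have $\omega(\pi)\le \pi\omega(\delta)/\delta$. Moreover $\delta\le\pi$ and $\alpha>-1$, so $\delta^{1+\alpha}\omega(\pi)\le C\delta^{\alpha}\omega(\delta)$ when $\alpha\ge 0$, and this is at most $C\omega(\delta)$.
\item[(v)] In general the ratio $\omega(\pi)/\omega(\delta)\le \pi/\delta$, which gives $\delta^{1+\alpha}\omega(\pi)\le \pi\delta^{\alpha}\omega(\delta)$. This is bounded by $C\omega(\delta)$ only if $\alpha\ge0$.
\item[(vi)] For $\alpha\in(-1,0)$, instead use the part of the integral in (\ref{eq-CCHL-02}) over $[\pi/4,\pi/2]$, where $\sin^{n-2}t\ge C$. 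That part is at least $C\omega(\pi/4)\ge C\omega(\pi)/4$. So (\ref{eq-CCHL-02}) itself gives $\delta^{1+\alpha}\omega(\pi)\le C\omega(\delta)$ whenever $\delta\le\pi/4$. The range $\delta\in[\pi/4,\pi]$ is trivial because $\omega(\delta)\ge\omega(\pi)/4$.
\end{itemize}

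Thus (\ref{eq-CCHL-02}) and (\ref{eq-CCHL-03}) are equivalent for each fixed $n$. With Theorem \ref{Thm-D}, this settles $(\mathcal{C}_1)\Leftrightarrow(\mathcal{C}_2)$ for $p=\infty$, once one notes that $\Lambda_{\omega,\infty}(\overline{\mathbb{B}^n})$ is the same space as in Theorem \ref{Thm-D}.

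\textbf{Step 2: $(\mathcal{C}_2)\Rightarrow(\mathcal{C}_1)$ for $p\in[1,\infty)$.} Fix $x_1,x_2\in\overline{\mathbb{B}^n}$ and write $u=P_\alpha[\varphi]$.

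\begin{itemize}
\item[(i)] For $R\in SO(n)$, rotation invariance of $P_\alpha$ gives $u(Rx)=P_\alpha[\varphi\circ R](x)$, because $P_\alpha(Rx,R\zeta)=P_\alpha(x,\zeta)$ and $\sigma$ is rotation invariant.
\item[(ii)] Introduce the $L^p(SO(n))$-valued function $\Phi(\zeta)=\varphi(\,\cdot\,\zeta)$, that is, $R\mapsto\varphi(R\zeta)$. By hypothesis, $\|\Phi(\xi_1)-\Phi(\xi_2)\|_{L^p(d\mu)}\le C\omega(|\xi_1-\xi_2|)$. Since $\varphi$ is continuous, $\Phi$ is continuous into $L^p$.
\item[(iii)] Then $u(R\,\cdot\,)$ is the vector-valued $\alpha$-Poisson integral $P_\alpha[\Phi]$.
\item[(iv)] The proof of Theorem \ref{Thm-D} for real-valued data uses only the kernel estimates, the triangle inequality and Minkowski's integral inequality. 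So it applies verbatim to Banach-space-valued data, and in particular to $L^p(SO(n))$, with norm in place of absolute value.
\item[(v)] To avoid reproving Theorem \ref{Thm-D} in vector form, one can instead use duality. Pick $g\in L^{p'}(SO(n))$ with $\|g\|_{p'}\le1$ and apply Theorem \ref{Thm-D} to the scalar function $\psi_g(\zeta)=\int_{SO(n)}\varphi(R\zeta)g(R)\,d\mu(R)$. By (ii) and Hölder, $\psi_g\in\Lambda_{\omega,\infty}(\mathbb{S}^{n-1})$ with constant independent of $g$.
\end{itemize}

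The point needing care is that the constant produced by Theorem \ref{Thm-D} must depend only on the Lipschitz seminorm and the sup norm of the data, which are uniformly bounded in $g$. The sup norm is fine, since $\|\psi_g\|_\infty\le\|\varphi\|_\infty\mu(SO(n))^{1/p}$. So I would verify, or extract from the proof in \cite{chen24}, a quantitative form of $(\mathcal A1)$:
$$\|P_\alpha[\varphi]\|_{\Lambda_\omega(\overline{\mathbb{B}^n}),s}\le C\|\varphi\|_{\Lambda_\omega(\mathbb{S}^{n-1})}.$$
This should follow from the closed graph theorem on the Banach space $\Lambda_{\omega}$. Taking the supremum over $g$ then gives $\mathcal{L}_p[u](x_1,x_2)\le C\omega(|x_1-x_2|)$.

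\textbf{Expected obstacle.} The main difficulty is the uniformity of constants in Step 2(v). The secondary difficulty is the case $\alpha<0$ in Step 1, which must be handled by the argument in Step 1(vi).
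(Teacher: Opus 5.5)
Your proposal is correct, and both steps take a different route from the paper.

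\textbf{Equivalence of the two conditions.} In Lemma \ref{lem-CCHL-3}, the paper first extracts $\inf_{0<\lambda\le c}\omega(\lambda)/\lambda^{1+\alpha}>0$ from \eqref{eq-CCHL-02}. It deduces that both integrals blow up as $\delta\to0^+$. It then bounds their ratio by L'Hôpital's rule, since the derivative ratio is $(\delta/\sin\delta)^{n-2}\to1$, together with continuity on $(0,\pi/2]$. The range $[\pi/2,\pi]$ is handled by Lemma \ref{lem-necessary}. Your approach instead compares the integrands directly on $[\delta,\pi/2]$ and adds the tail bound $\delta^{1+\alpha}\omega(\pi)\le C\omega(\delta)$. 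That tail bound carries the same information as the paper's positive infimum. Your version is more elementary and gives explicit constants. The argument in your (vi) in fact works for every $\alpha>-1$, so the split at $\alpha\ge0$ is unnecessary.

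\textbf{The case $p\in[1,\infty)$.} The paper carries out your option (iv) by hand. Lemmas \ref{lem-1.1}, \ref{thm-1b}, \ref{lem-1.2}, \ref{lem-1.3} and \ref{lem-CCHL-1} repeat the $p=\infty$ argument of \cite{chen24} with $L^p(SO(n))$-norms. They use rotation invariance of $\sigma$, Minkowski's integral inequality and the kernel estimates of \cite{chen24}.

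Your duality option (v) avoids all of that. By Fubini, $\int_{SO(n)}u(Rx)g(R)\,d\mu(R)=P_\alpha[\psi_g](x)$. Hence $\mathcal{L}_p[u](x_1,x_2)$ is the supremum over $\|g\|_{p'}\le1$ of $|P_\alpha[\psi_g](x_1)-P_\alpha[\psi_g](x_2)|$.

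The uniformity of constants you flag is not a real obstacle. $\Lambda_\omega(\mathbb{S}^{n-1})$ and $\Lambda_\omega(\overline{\mathbb{B}^n})$ are Banach spaces. If $\varphi_k\to\varphi$ uniformly, then $P_\alpha[\varphi_k]\to P_\alpha[\varphi]$ pointwise, so the graph is closed and the closed graph theorem applies. Alternatively, the proof in \cite{chen24} is already quantitative (cf. Lemma \ref{Lem-I}).

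\textbf{What each route buys.} Your route makes the $p<\infty$ case a formal corollary of the $p=\infty$ case plus rotation equivariance. Boundary points of $\overline{\mathbb{B}^n}$ are then covered directly by Theorem \ref{Thm-D}. The paper's route is longer, but it produces explicit $L^p$-averaged intermediate estimates that are useful in their own right: the boundary approach rate $\omega(1-r)$ in Lemma \ref{lem-1.1} and the gradient bound in Lemma \ref{thm-1b}.
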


\begin{remark}
(1)
If $\alpha>0$, then every majorant satisfies $(\ref{eq-CCHL-03})$ (cf. \cite[Remark (2)]{chen24}).

(2)
If $\alpha=0$ and $\omega$ is slow, then $\omega$ satisfies $(\ref{eq-CCHL-03})$.

(3)
If $\alpha\in (-1,0)$,
then $\omega=\omega_{\beta}$ ($0<\beta\leq 1$) satisfies $(\ref{eq-CCHL-03})$
if and only if $\beta<1+\alpha$,
where $\omega_{\beta}(t) = t^{\beta}$ for $t \geqslant 0$.

\end{remark}

    	
    	
    Take majorants $\omega_1$ and $\omega_2$, and define the operator norm
    \[
    \|P_{\alpha}\|_{\omega_{1}\to\omega_{2}}=\sup_{\substack{f\in\Lambda_{\omega_{1}}(\mathbb{S}^{n-1})\\ \|f\|_{\Lambda_{\omega_{1}}(\mathbb{S}^{n-1})}\neq 0}}\frac{\|P_{\alpha}f\|_{\Lambda_{\omega_{2}}(\mathbb{B}^{n})}}{\|f\|_{\Lambda_{\omega_{1}}(\mathbb{S}^{n-1})}}.
    \]
    The finiteness of $\|P_{\alpha}\|_{\omega_1 \to \omega_2}$ is of particular interest.

For each $a \in \mathbb{S}^{n-1}$, we define a test function $\tau_{a, \omega}$ on $\mathbb{S}^{n-1}$ by
\begin{equation}\label{eq-1.9.1}
\tau_{a, \omega}(\xi) = \omega(|\xi - a|), \quad \text{for } \xi \in \mathbb{S}^{n-1}.
\end{equation}


      We shall see in Lemma \ref{lem-3.1} that  $\tau_{a, \omega} \in \Lambda_{\omega}(\mathbb{S}^{n-1})$.


Let $D$ be a bounded  domain in $\mathbb{R}^{n}$ with $n\geq2$.
By the classical Wiener criterion (see, e.g., \cite[Sec. 7.7]{AG}), if
$D$ is regular for the Dirichlet problem, then
$D$ has no trivial boundary points.
For the definition of a trivial boundary point, we refer the reader to \cite[Def. 1]{Ai2002}.
For a function $f$
on $\partial D$, we denote by $\mathcal{H}^{D}f$ the Dirichlet solution, for the Laplacian equation $\Delta_{0}\mathcal{H}^{D}f=0$ (or $\Delta\mathcal{H}^{D}f=0$), of $f$
over $D$, that is, $\mathcal{H}^{D}f$ is harmonic in $D$ and $\mathcal{H}^{D}f=f$
on $\partial D$. 
Recently, Aikawa \cite{Ai2002} studied the H\"older continuity of $\mathcal{H}^{D}f$. This work was later generalized in \cite{Ai2010} by
considering an arbitrary modulus of continuity $\mathcal{M}$, defined as the class of positive, nondecreasing, and concave functions $\phi$ on $(0, \infty)$ satisfying
$$\phi(0) = \lim_{t \to 0^{+}} \phi(t) = 0.$$ We now state the relevant theorem from \cite{Ai2010}.


      \begin{Thm}{\rm (\cite[Theorem 1.1]{Ai2010})}\label{Thm-F}
      	Let $D$ be a bounded regular domain in $\mathbb{R}^n$ and let $\psi \in \mathcal{M}$. Then the following statements are equivalent:
      	
      \begin{enumerate}
    	\item[{\rm $(i)$}]
      	  $\left\|\mathcal{H}^{D}\right\|_{\psi \rightarrow \psi}
     =\sup \left\{ \frac{\|\mathcal{H}^{D} f\|_{\psi,  D} }{\|f\|_{\psi, \partial D}}:f\in \Lambda_{\psi}(\partial D),\|f\|_{\psi,\partial D}\not=0\right\}<\infty$;
      	\item[{\rm $(ii)$}] there is a constant  $C \geqslant 1$  such that
      	$$
      	\mathcal{H}^{D} \tau_{a, \psi}(x) \leqslant C \psi(|x-a|) \quad \text { for } x \in D,
      	$$
      	whenever  $a \in \partial D$.
        \end{enumerate}
      \end{Thm}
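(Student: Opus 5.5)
The implication $(i)\Rightarrow(ii)$ will come from testing the operator on the functions $\tau_{a,\psi}$. The implication $(ii)\Rightarrow(i)$ will come from a barrier argument at the boundary followed by a Whitney-type interior estimate. Throughout I use two elementary consequences of $\psi\in\mathcal{M}$ (positive, nondecreasing, concave, $\psi(0^+)=0$):
\begin{itemize}
\item subadditivity, $\psi(s+t)\le\psi(s)+\psi(t)$, hence $\psi(kt)\le k\psi(t)$ for integers $k\ge1$;
\item $t\mapsto\psi(t)/t$ is nonincreasing.
\end{itemize}
I also use the fact that regularity of $D$ makes $\mathcal{H}^{D}f$ continuous on $\overline{D}$ with boundary values $f$.

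\emph{Proof of $(i)\Rightarrow(ii)$.} Fix $a\in\partial D$.
\begin{enumerate}
\item By subadditivity and monotonicity,
$$|\tau_{a,\psi}(\xi)-\tau_{a,\psi}(\eta)|=\bigl|\psi(|\xi-a|)-\psi(|\eta-a|)\bigr|\le\psi\bigl(\bigl||\xi-a|-|\eta-a|\bigr|\bigr)\le\psi(|\xi-\eta|).$$
\item Also $0\le\tau_{a,\psi}\le\psi(\operatorname{diam}D)$. Hence $\|\tau_{a,\psi}\|_{\psi,\partial D}\le 1+\psi(\operatorname{diam}D)$, with a bound independent of $a$.
\item By regularity, $\mathcal{H}^{D}\tau_{a,\psi}$ extends continuously to $a$ with value $\tau_{a,\psi}(a)=0$.
\item Applying $(i)$ to the pair $x,a$ gives
$$\mathcal{H}^{D}\tau_{a,\psi}(x)=\mathcal{H}^{D}\tau_{a,\psi}(x)-\mathcal{H}^{D}\tau_{a,\psi}(a)\le\|\mathcal{H}^{D}\|_{\psi\to\psi}\,\bigl(1+\psi(\operatorname{diam}D)\bigr)\,\psi(|x-a|),$$
which is $(ii)$.
\end{enumerate}

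\emph{Proof of $(ii)\Rightarrow(i)$, boundary step.} Normalize $\|f\|_{\psi,\partial D}=1$. The sup part of the norm is controlled by the maximum principle. For the Lipschitz part, fix $a\in\partial D$. On $\partial D$ we have $f(a)-\tau_{a,\psi}\le f\le f(a)+\tau_{a,\psi}$. Since $\mathcal{H}^{D}$ is linear, positive and preserves constants, $(ii)$ gives
$$|\mathcal{H}^{D}f(x)-f(a)|\le C\psi(|x-a|)\qquad(x\in D,\ a\in\partial D).\qquad(\ast)$$

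\emph{Proof of $(ii)\Rightarrow(i)$, interior step.} Take $x,y\in D$ with $d(x):=\operatorname{dist}(x,\partial D)\ge d(y)$. Let $a\in\partial D$ be a point with $|x-a|=d(x)$.
\begin{itemize}
\item \emph{Case $|x-y|\ge d(x)/2$.} Choose $b\in\partial D$ nearest to $y$. Then $|x-a|\le 2|x-y|$, $|y-b|=d(y)\le 2|x-y|$, and $|a-b|\le5|x-y|$. Apply $(\ast)$ at $x$ and $a$, at $y$ and $b$, and compare $f(a)$ with $f(b)$. Using $\psi(kt)\le k\psi(t)$, this gives $|\mathcal{H}^{D}f(x)-\mathcal{H}^{D}f(y)|\le C\psi(|x-y|)$.
\item \emph{Case $|x-y|<d(x)/2$.} Put $h=\mathcal{H}^{D}f-f(a)$, which is harmonic on $B(x,d(x))\subset D$. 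Every $z$ in this ball satisfies $|z-a|<2d(x)$, so $(\ast)$ gives $|h|\le C\psi(2d(x))\le 2C\psi(d(x))$ there. The standard gradient estimate for harmonic functions then yields $|\nabla h|\le C\psi(d(x))/d(x)$ on $B(x,d(x)/2)$. Since $\psi(t)/t$ is nonincreasing and $|x-y|<d(x)$,
$$|\mathcal{H}^{D}f(x)-\mathcal{H}^{D}f(y)|\le C\,\frac{\psi(d(x))}{d(x)}\,|x-y|\le C\psi(|x-y|).$$
\end{itemize}
Together the two cases give $\|\mathcal{H}^{D}f\|_{\psi,D}\le C$, which is $(i)$.

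The only delicate point is the boundary barrier step $(\ast)$. It needs the two-sided comparison $f(a)\pm\tau_{a,\psi}$ together with the fact that, by regularity, $\mathcal{H}^{D}$ attains the continuous boundary data. Once $(\ast)$ is available, the interior estimates are routine consequences of the concavity of $\psi$.
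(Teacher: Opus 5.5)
Your proof is correct and is essentially Aikawa's argument, which the paper cites rather than reproves and whose skeleton it reuses for Theorem~\ref{thm-1.2}: for $(i)\Rightarrow(ii)$ you test the operator on $\tau_{a,\psi}$ and let $y\to a$, and for the converse you combine the barrier bound $|\mathcal{H}^{D}f(x)-f(a)|\le C\psi(|x-a|)$ with the split into $|x-y|\ge \frac12 d(x)$ (nearest boundary points) and $|x-y|<\frac12 d(x)$ (local harmonic gradient estimate on $B(x,d(x))$). The only difference from the paper's $P_{\alpha}$ version is that there the interior gradient bound comes from a global estimate (Lemma~\ref{Lem-I}), because the local harmonic estimate you use is not available when $\alpha\neq 0$.
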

\noindent
Here, we say $f\in \Lambda_{\psi}(E)$ for $\psi \in \mathcal{M}$ and for an arbitrary set $E\subset \mathbb{R}^{n}$ if $f$ is a bounded continuous function on $E$ with
$$
	\|f\|_{\psi,E}:=\sup_{x\in E}|f(x)|
+\sup_{\substack{x,y\in E\\ x\not=y}}
\frac{|f(x)-f(y)|}{\psi(|x-y|)}
<\infty
$$
and the function $\tau_{a, \psi} $ on $\partial D$ is defined by
$$
\tau_{a, \psi}(\xi)=\psi(|\xi-a|).
$$

      In \cite{It2012}, Itoh established related results for $p$-harmonic functions.
By analogy with Theorem \ref{Thm-F}, the second aim of this paper is to study the modulus of continuity of Dirichlet solutions to the M\"obius invariant Laplace equation on ${\mathbb{B}^{n}}$.
In the proofs of the main results in \cite{Ai2002} and \cite{Ai2010}, the authors used the local Poisson integral representation for harmonic functions.
 However, a local integral representation is not available for solutions $P_{\alpha}[\varphi]$ of the invariant Laplace equation when $\alpha \neq 0$, where $\varphi$ denotes the boundary data.
 Consequently, alternative methods of proof are necessary to investigate the boundedness of the operator norm of $P_{\alpha}$. Employing different proof techniques, we derive the following result.

     \begin{thm}\label{thm-1.2}
      Let  $\omega$  be a majorant and let $\alpha>-1$. Then the following statements are equivalent:

      \begin{enumerate}
    	\item[{\rm $(i)$}]  $\left\|P_{\alpha}\right\|_{\omega\to\omega}<\infty$.

      \item[{\rm $(ii)$}] there is a constant  C  such that
      $$
      P_{\alpha} [\tau_{a, \omega}](x) \leq C \omega(|x-a|) \quad \text { for } x \in \mathbb{B}^{n},
      $$
      whenever  $a \in \mathbb{S}^{n-1}$.
      \end{enumerate}
      \end{thm}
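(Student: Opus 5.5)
The plan is to prove the two implications separately. For (i)$\Rightarrow$(ii) I test the operator on $\tau_{a,\omega}$. For (ii)$\Rightarrow$(i) I run an Aikawa/Dyakonov-type argument, replacing the local Poisson representation by a comparison with $P_\alpha[1]$ and interior gradient estimates.

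\emph{(i)$\Rightarrow$(ii).} By Lemma \ref{lem-3.1}, $\tau_{a,\omega}\in\Lambda_{\omega}(\mathbb{S}^{n-1})$ with $\|\tau_{a,\omega}\|_{\Lambda_\omega(\mathbb{S}^{n-1})}\le C$ independently of $a$ (the sup part is at most $\omega(2)$). Hence $u=P_\alpha[\tau_{a,\omega}]\in\Lambda_\omega(\mathbb{B}^n)$ with norm at most $C\|P_\alpha\|_{\omega\to\omega}$. A function in $\Lambda_\omega(\mathbb{B}^n)$ extends continuously to $\overline{\mathbb{B}^n}$, and by the solvability of \eqref{eq-1.1} this extension equals $\tau_{a,\omega}$ on $\mathbb{S}^{n-1}$; in particular $u(a)=\tau_{a,\omega}(a)=0$. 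Therefore
\[
P_\alpha[\tau_{a,\omega}](x)=|u(x)-u(a)|\le C\|P_\alpha\|_{\omega\to\omega}\,\omega(|x-a|).
\]

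\emph{(ii)$\Rightarrow$(i), preliminary consequence.} Restricting the integral to $\{|\xi-a|\ge 1\}$ with $a=x/|x|$ gives $P_\alpha[\tau_{a,\omega}](x)\ge c\,\omega(1)(1-|x|)^{1+\alpha}$. Combined with (ii) this yields $d^{1+\alpha}\le C\omega(d)$ for $d\in(0,1]$. For $\alpha\ge0$ we use instead the fact that $\omega(t)/t$ is nonincreasing, which gives $\omega(d)\ge c\,d$. Write $\gamma=\min\{1,1+\alpha\}$, so in both cases $d^{\gamma}\le C\omega(d)$. Next, $P_\alpha[1]$ is radial, smooth in $\mathbb{B}^n$ and equal to $1$ on $\mathbb{S}^{n-1}$; expanding it as a hypergeometric function of $|x|^2$, I expect
\[
|1-P_\alpha[1](x)|\le C(1-|x|)^{\gamma},\qquad |\nabla P_\alpha[1](x)|\le C(1-|x|)^{\gamma-1}.
\]
Both are therefore controlled by $C\omega(d)$ and $C\omega(d)/d$ respectively, where $d=1-|x|$. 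Verifying these asymptotics carefully is the step I expect to be the main obstacle. It is exactly what replaces the identity $P_0[1]\equiv1$ used in the harmonic case.

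\emph{Main estimates.} Let $f\in\Lambda_\omega(\mathbb{S}^{n-1})$ and $u=P_\alpha[f]$. First, $\sup|u|\le \sup P_\alpha[1]\cdot\sup|f|$. Second, for $x\in\mathbb{B}^n$ and $a\in\mathbb{S}^{n-1}$, positivity of the kernel gives
\[
|u(x)-f(a)|\le \|f\|_{\Lambda_\omega,s}\,P_\alpha[\tau_{a,\omega}](x)+|f(a)|\,|1-P_\alpha[1](x)|\le C\|f\|_{\Lambda_\omega}\,\omega(|x-a|).
\]
Third, for the gradient bound, put $a=x/|x|$ and $v=u-f(a)P_\alpha[1]$, which solves $\Delta_\alpha v=0$. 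On the ball $B(x,d/2)$ the operator $(1-|y|^2)\Delta+2\alpha\langle\nabla,y\rangle+\alpha(n-2-\alpha)$, rescaled to the unit ball, is uniformly elliptic with bounded coefficients, so interior Schauder estimates give $|\nabla v(x)|\le C d^{-1}\sup_{B(x,d/2)}|v|$. On that ball $|v|\le C\|f\|\omega(d)$, by the boundary estimate together with $|1-P_\alpha[1]|\le C\omega(d)$. Combining with the bound on $\nabla P_\alpha[1]$ yields $|\nabla u(x)|\le C\|f\|\,\omega(1-|x|)/(1-|x|)$.

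\emph{Conclusion.} For $x,y\in\mathbb{B}^n$ with $|x-y|<\tfrac12\max\{1-|x|,1-|y|\}$, integrate the gradient bound along the segment $[x,y]$. Along it $1-|z|$ is comparable to $d$, so since $\omega(t)/t$ is nonincreasing and $|x-y|\le d$,
\[
|u(x)-u(y)|\le C\|f\|\,\frac{\omega(d)}{d}\,|x-y|\le C\|f\|\,\omega(|x-y|).
\]
Otherwise $1-|x|,1-|y|\le 2|x-y|$. Take $a=x/|x|$ and $b=y/|y|$; then $|x-a|,|y-b|,|a-b|\le C|x-y|$. By the boundary estimate and $f\in\Lambda_\omega$,
\[
|u(x)-u(y)|\le|u(x)-f(a)|+|f(a)-f(b)|+|f(b)-u(y)|\le C\|f\|\,\omega(|x-y|),
\]
using $\omega(Ct)\le C\omega(t)$. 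Together these give $\|P_\alpha f\|_{\Lambda_\omega(\mathbb{B}^n)}\le C\|f\|_{\Lambda_\omega(\mathbb{S}^{n-1})}$, which is (i).
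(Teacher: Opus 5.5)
Your proof is correct. For (i)$\Rightarrow$(ii) you argue exactly as the paper does, but for (ii)$\Rightarrow$(i) you take a genuinely different route.

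\textbf{The paper's route.} In Lemma \ref{lem-omega} (via the lower bound for $P_\alpha[\omega(\arccos\zeta_1)]$ from \cite{chen24}), the paper first shows that (ii) forces the Hardy--Littlewood condition \eqref{eq-CCHL-02}. It then imports the global kernel estimates of \cite{chen24}: the gradient bound of Lemma \ref{Lem-I}, and $P_\alpha[1]\in\Lambda_\omega(\mathbb{B}^n)$. Condition (ii) itself is used only in the boundary comparison $|P_\alpha[f](x)-P_\alpha[1](x)f(x^*)|\le\|f\|\,P_\alpha[\tau_{x^*,\omega}](x)$.

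\textbf{Your route.} You never need \eqref{eq-CCHL-02}. From (ii) you extract only the crude bound $d^{\min\{1,1+\alpha\}}\le C\omega(d)$. You then obtain the gradient estimate locally, by applying interior Schauder estimates on Whitney balls to $u-f(a)P_\alpha[1]$, whose supremum there is controlled by (ii). This is a faithful transplant of Aikawa's localization, with the local Poisson representation replaced by elliptic regularity. Two small points on the Schauder step: after rescaling $y=x+\frac{d}{2}w$ you must also divide by $d$, since $1-|y|^2\asymp d$ on $B(x,d/2)$; and the interior estimate needs no sign condition on the zeroth-order term.

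\textbf{The step you flag.} The step you call the main obstacle is routine. Euler's transformation gives $P_\alpha[1](x)=C_{n,\alpha}\,{}_2F_1\bigl(-\frac{\alpha}{2},\frac{n}{2}-1-\frac{\alpha}{2};\frac{n}{2};|x|^2\bigr)$. Its derivative in $z=|x|^2$ is a multiple of a ${}_2F_1$ with $c-a-b=\alpha$. Hence the gradient vanishes for $\alpha=0$, is bounded for $\alpha>0$, and is $O\bigl((1-|x|)^{\alpha}\bigr)$ for $-1<\alpha<0$. Integrating inward from $\mathbb{S}^{n-1}$, where $P_\alpha[1]=1$, then gives your bound on $|1-P_\alpha[1]|$.

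\textbf{Trade-off.} The paper's route is shorter given \cite{chen24}. It also shows in passing that (i) and (ii) are each equivalent to \eqref{eq-CCHL-02}, and hence to \eqref{eq-CCHL-03}. Yours is self-contained and avoids the global kernel integrals entirely. It would also carry over to domains or operators where such explicit kernel estimates are unavailable.
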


      \subsection*{The Girela-Pel\'aez conjecture for the M\"obius invariant Laplacian operator}
      For $n\geq2$ and $a\in\mathbb{B}^{n}$, the {\it M\"obius transformation} in
$\mathbb{B}^{n}$ is defined by
\be\label{eq-ex1}
\phi_{a}(x)=\frac{|x-a|^{2}a+(1-|a|^{2})(a-x)}{|x-a|^{2}+(1-|a|^{2})(1-|x|^{2})},~x\in\mathbb{B}^{n}.
\ee
We recall the following facts from
\cite{Bea}: For $a\in\mathbb{B}^{n}$ and
$\phi_{a}\in\Aut(\mathbb{B}^{n})$, we have $\phi_{a}(0)=a$,
$\phi_{a}(a)=0$, $\phi_{a}(\phi_{a}(x))=x \in\mathbb{B}^{n}$.
The {\it M\"obius invariant measure} $V_{h}$ on $\mathbb{B}^{n}$
is given by $$dV_{h}(x)=\frac{dV_{N}(x)}{(1-|x|^{2})^{n}},$$
where $dV_{N}$ denotes the  normalized Lebesgue volume measure on $\mathbb{B}^{n}$ such that $V_{N}(\mathbb{B}^{n})=1$.
%

Recall that a function is subharmonic with respect to the M\"obius invariant
Laplacian  as follows.
For $n\geq2$, an upper semicontinuous function
$f:~\mathbb{B}^{n}\mapsto[-\infty,\infty)$, with $f\not\equiv-\infty$, is {\it invariant subharmonic} in $\mathbb{B}^{n}$ if
\be\label{ho-1}f(a)\leq\int_{\mathbb{S}^{n-1}}f(\phi_{a}(r\zeta))d\sigma(\zeta)\ee
for all $a\in\mathbb{B}^{n}$ and all  sufficiently small $r\in(0,1)$,
where $\phi_{a}\in\Aut(\mathbb{B}^{n})$ is as in \eqref{eq-ex1}.
It is well known that $f\in C^{2}(\mathbb{B}^{n})$,  i.e., the set of all twice continuously differentiable functions of  $\mathbb{B}^{n}$, is  invariant subharmonic in $\mathbb{B}^{n}$
if and only if $\Delta_{h}f(x)\geq0$ for all $x\in\mathbb{B}^{n}$.
Moreover, $f\in C^{2}(\mathbb{B}^{n})$ is invariant harmonic in $\mathbb{B}^{n}$ if and only if equality holds in (\ref{ho-1}).
This is the so-called the well-known mean-value property with respect to the M\"obius invariant
Laplacian operator (\ref{eq-mo}) (see \cite{Kur,Sto-2016}).

Let $n\geq2$ and $\nu\geq1$ be integers.
For
$f:~\mathbb{B}^{n}\rightarrow\mathbb{R}^{\nu}$ such that $f$ is measurable,
let
$$
M_{p}(r,f)=\left(\int_{\mathbb{S}^{n-1}}|f(r\zeta)|^{p}\,d\sigma(\zeta)\right)^{\frac{1}{p}}~\mbox{and}~M_{\infty}(r,f)=\sup_{\zeta\in\mathbb{S}^{n-1}}|f(r\zeta)|.$$

      A classical result of Hardy and Littlewood asserts that if
$p\in(0,\infty]$, $\alpha\in(1,\infty)$ and $f$ is a holomorphic
function in $\mathbb{D}$, then (cf. \cite{CPR,CS-2015, GPP,GP,HL31,HL32,SS-05})
$$ M_{p}(r,f')=O \left(\left(\frac{1}{1-r}\right)^{\alpha} \right ) ~\mbox{ as $r\rightarrow1^{-}$}
$$
if and only if
$$M_{p}(r,f)=O \left (\left(\frac{1}{1-r}\right)^{\alpha-1}\right) ~\mbox{ as $r\rightarrow1^{-}$}.
$$
In \cite[Theorem 1(a)]{GP}, Girela and
Pel\'{a}ez refined the above result for the case $\alpha=1$ as follows.
If $p\in(2,\infty)$ and $f$ is a holomorphic function in
$\mathbb{D}$ such that
$$M_{p}(r,f')=O \left (\frac{1}{1-r}\right  )  ~\mbox{ as
$r\rightarrow1^{-}$},
$$
then for all $ \beta>\frac{1}{2}$,
      \be\label{eq1.1ga} M_{p}(r,f)=O \left
(\left(\log\frac{1}{1-r}\right)^{\beta} \right ) ~\mbox{ as
$r\rightarrow1^{-}$}.  \ee

In \cite[p.464, Equation (26)]{GP}, Girela and Pel\'aez conjectured that $\beta = 1/2$ in (\ref{eq1.1ga}).
Using the closed graph theorem, Girela et al. \cite{GPP} confirmed this conjecture by proving the following result.

     \begin{Thm} $($\cite[Theorem 1.1]{GPP}$)$\label{Thm-JA} Let $p\in(2,\infty)$. If  $f$ is
holomorphic in $\mathbb{D}$ such that
$$M_{p}(r,f')=O \left ( \frac{1}{1-r} \right  )  ~\mbox{ as $r\rightarrow1^{-}$},
$$
then
$$ M_{p}(r,f)=O \left (\left(\log\frac{1}{1-r}\right)^{\frac{1}{2}} \right ) ~\mbox{ as $r\rightarrow 1^{-}$}.
$$
\end{Thm}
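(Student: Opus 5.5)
The plan is to reduce the statement to a Littlewood--Paley type inequality valid for exponents $p\geq 2$, and then integrate the hypothesis against the resulting weight. The tool is the Littlewood--Paley $g$-function. For $f$ holomorphic in a neighbourhood of $\overline{\mathbb{D}}$ and $\zeta\in\mathbb{T}$, set
$$
g(f)(\zeta)=\left(\int_{0}^{1}|f'(s\zeta)|^{2}(1-s)\,ds\right)^{\frac12}.
$$
The classical Littlewood--Paley theorem gives, for every $p\in(0,\infty)$, the bound $\|f-f(0)\|_{L^{p}(\mathbb{T})}\leq C_{p}\|g(f)\|_{L^{p}(\mathbb{T})}$. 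This can be obtained from the Lusin area function and the Burkholder--Gundy--Silverstein/Calder\'on theory, and for our purposes it can be quoted.

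The key step, and the only place where $p>2$ (indeed $p\geq 2$) is used, is that $p/2\geq 1$, so Minkowski's integral inequality holds in $L^{p/2}(\mathbb{T})$:
$$
\|g(f)\|_{L^{p}}^{2}=\Big\|\int_{0}^{1}|f'(s\,\cdot)|^{2}(1-s)\,ds\Big\|_{L^{p/2}}\leq\int_{0}^{1}M_{p}(s,f')^{2}(1-s)\,ds .
$$
Combining the two displays yields, for $p\geq 2$,
$$
M_{p}(1,f)^{2}\leq C_{p}\Big(|f(0)|^{2}+\int_{0}^{1}M_{p}(s,f')^{2}(1-s)\,ds\Big).
$$
For general $f$ I will apply this to the dilation $f_{r}(z)=f(rz)$, which is holomorphic on a neighbourhood of $\overline{\mathbb{D}}$.

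Now insert the hypothesis $M_{p}(\rho,f')\leq C/(1-\rho)$ for all $\rho\in[0,1)$; this is harmless, since $M_{p}(\rho,f')$ is bounded for $\rho$ away from $1$. Because $(f_{r})'(z)=rf'(rz)$, we get
$$
M_{p}(r,f)^{2}\leq C\Big(|f(0)|^{2}+\int_{0}^{1}\frac{r^{2}(1-s)}{(1-rs)^{2}}\,ds\Big).
$$
Since $1-s\leq 1-rs$, the integral is at most $\int_{0}^{1}\frac{ds}{1-rs}=\frac1r\log\frac{1}{1-r}$. Hence $M_{p}(r,f)^{2}=O\big(\log\frac{1}{1-r}\big)$, and taking square roots gives the claimed $O\big((\log\frac{1}{1-r})^{1/2}\big)$.

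The main obstacle is justifying the Littlewood--Paley inequality $\|f-f(0)\|_{p}\lesssim\|g(f)\|_{p}$ with the $(1-s)$-weighted radial $g$-function. Once that is granted, the Minkowski step and the elementary integral are routine. I would first try to quote it in this radial form. If only the area-function version is available, I would pass to the radial $g$-function via the standard comparison of $g$ with the nontangential area integral, using subharmonicity of $|f'|^{2}$ on Stolz angles. This is also precisely why the argument needs $p\geq 2$: the Minkowski step fails for $p<2$.
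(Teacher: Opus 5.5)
Your argument is correct, but it reaches the result by a different route from the paper's.

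\textbf{How the paper gets there.} The paper does not reprove Theorem~\ref{Thm-JA} directly; it cites \cite{GPP}, where the closed graph theorem is used. It recovers the result from Theorem~\ref{thm-1} with $n=2$, $\lambda_1=\lambda_2\equiv 0$ and $\omega(t)=t$, applied to $u=\operatorname{Re} f$ and $u=\operatorname{Im} f$. (For $n=2$, hyperbolic harmonic means harmonic, and $|\nabla u|=|f'|$.) The argument runs as follows:
\begin{enumerate}
\item the Green-type formula of Theorem~\ref{Thm-J} for $M_p^p(r,u)$;
\item the identity \eqref{eq-fp-laph} for $\Delta_h|u|^p$;
\item H\"older's inequality on each sphere, $\int_{\mathbb{S}^{n-1}}|u|^{p-2}|\nabla u|^2\,d\sigma\le M_p^{p-2}(\rho,u)M_p^2(\rho,\nabla u)$;
\item monotonicity of $\rho\mapsto M_p(\rho,u)$, which lets one pull out $M_p^{p-2}(r,u)$ and divide.
\end{enumerate}
In the disc this gives $M_p^2(r,u)\le|u(0)|^2+p(p-1)\int_0^r(1-\rho)M_p^2(\rho,\nabla u)\,d\rho$.

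\textbf{Comparison.} Substituting $\rho=rs$, your bound becomes $\int_0^1(1-s)M_p^2(s,f_r')\,ds=\int_0^r(r-\rho)M_p^2(\rho,f')\,d\rho$. Up to replacing $r-\rho$ by $1-\rho$, this is the paper's estimate. So the two proofs meet at the same intermediate inequality and differ only in how they get there.

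Your route is short and linear. However, it rests on the Littlewood--Paley $g$-function theorem, a deep result tied to holomorphic or harmonic functions, and it carries an unspecified constant $C_p$.

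In the paper, H\"older on circles plus monotonicity replaces both that theorem and your Minkowski step. It uses nothing beyond Green's formula and gives explicit constants. It also applies to real-valued functions on $\mathbb{B}^n$ that only satisfy the differential inequality defining $\mathbf{H}_{\lambda_1,\lambda_2}(\mathbb{B}^n)$, where no Littlewood--Paley theorem is at hand.

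\textbf{A caveat on your fallback.} The fallback you sketch for the step you flag as the main obstacle would not deliver it. Subharmonicity of $|f'|^2$ on Stolz angles gives the pointwise bound $g(f)\le C\,S(f)$, with $S$ the area function over a slightly wider angle. Hence it gives $\|g(f)\|_{L^p}\lesssim\|f\|_{L^p}$, which is the reverse of the inequality $\|f-f(0)\|_{L^p}\lesssim\|g(f)\|_{L^p}$ that you need. The needed inequality is classical for $1<p<\infty$ and can simply be quoted (see \cite{Z59}, Ch.~XIV).

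If you want to derive it from area-function theory, argue by duality:
\begin{enumerate}
\item For $h$ holomorphic near $\overline{\mathbb{D}}$, $\int_{\mathbb{T}}(f-f(0))\overline{h}\,d\sigma=\frac{2}{\pi}\int_{\mathbb{D}}f'\overline{h'}\log\frac{1}{|z|}\,dA(z)$.
\item Since $r\log\frac1r\le 1-r$, this is bounded by $4\int_{\mathbb{T}}g(f)g(h)\,d\sigma\le 4\|g(f)\|_{L^p}\|g(h)\|_{L^{p'}}$.
\item The bound $\|g(h)\|_{L^{p'}}\lesssim\|h\|_{L^{p'}}$ is where your comparison $g\lesssim S$ is actually used.
\item The M.~Riesz theorem then lets you compute $\|f-f(0)\|_{L^p}$ by testing against such $h$.
\end{enumerate}
Alternatively, bypass the $g$-function entirely, as the paper does.
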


By using weight,  Chen and Hamada \cite[Theorem 3]{CH2025MathZ} provided a close
relationship between the integral means of pluriharmonic (holomorphic) functions and
those of their derivatives in bounded symmetric domains $\Omega$ in $\mathbb{C}^n$.
In particular, a generalization of Theorem \ref{Thm-JA} to pluriharmonic functions on $\Omega$ was obtained.

     Let $u$ be a twice continuously differentiable functions of  $\mathbb{B}^{n}$ into $\mathbb{R}$. If there exist
real-valued nonnegative continuous functions $\chi_{1}$ in $\mathbb{B}^{n}$ and
nonnegative constants $\gamma_{j}~(j\in\{1,2,3\})$ such that
\be\label{Heinz-57}|u(x)|^{\gamma_{1}}|\Delta u(x)|^{\gamma_{2}}\leq\chi_{1}(x)|\nabla u(x)|^{\gamma_{3}},\ee
then we call $u$ satisfying {\it the Heinz type nonlinear
differential inequality}. By using (\ref{Heinz-57}), Heinz \cite{HZ} investigated the  mean curvature of surfaces,
the elliptic Monge-Amp${\rm\grave{e}}$re equations, the Poisson equations, the gradient equations, the Harnack inequality for a class of elliptic
differential equations and the existence of solutions to some class of elliptic
differential equations. This remarkable paper has attracted much attention of many mathematicians (see \cite{C2026,Mar,Ni,Qiu,Wan}).

    Analogous to (\ref{Heinz-57}), for the Laplace-Beltrami operator, we let $\mathbf{H}_{\lambda_{1},\lambda_{2}}(\mathbb{B}^{n})$ denote the class of all functions
$f \in C^{2}(\mathbb{B}^{n})$ mapping $\mathbb{B}^{n}$ into $\mathbb{R}$ that satisfy the hyperbolic {\it Heinz nonlinear differential inequality}.
\beqq
0 \leq f(x)\Delta_{h} f(x)\leq \lambda_{1}(|x|)|\nabla^{h} f(x)|^{2}+\lambda_{2}(|x|)|f(x)|^{2},~x\in\mathbb{B}^{n}, \eeqq
where     $\lambda_{1}$ and $\lambda_{2}$ are
 nonnegative continuous functions in $[0,1)$, 
 and $|\nabla^{h} f(x)|^{2}=(1-|x|^{2})^{2}|\nabla f(x)|^{2}$.

 In the following, we show that the Girela-Pel\'aez conjecture holds positively for more general classes of
 functions induced by the M\"obius invariant Laplacian operator (\ref{eq-mo}).

  \begin{thm}\label{thm-1}
Let $p\in[2,\infty)$ and $\omega$ be a majorant. Suppose that $\lambda_{1}$ and $\lambda_{2}$  are
 nonnegative continuous functions in $[0,1)$ 
 and $$\int_{0}^{r}\frac{\rho^{n-1}g_{n}(\rho,r)\lambda_{2}(\rho)}{(1-\rho^{2})^{n}}d\rho<\frac{1}{np},$$
 where $$g_{n}(\rho,r)=\frac{1}{n}\int_{\rho}^{r}\frac{(1-s^{2})^{n-2}}{s^{n-1}}ds.$$
For $r\in[0,1)$, if $f\in\mathbf{H}_{\lambda_{1},\lambda_{2}}(\mathbb{B}^{n})$ such that
\be\label{eq-k-1}M_{p}(r,\nabla f)\leq C\omega\left(\frac{1}{1-r}\right), \ee
then
\beqq
M_{p}(r,f)&\leq&\frac{1}{\sqrt{C(r)}}\left[|f(0)|^{2}+
\frac{2^{n-2}pC\omega(1)}{n-1}\int_{0}^{r}\left(p-1+\lambda_{1}(\rho)\right)
\omega\left(\frac{1}{1-\rho}\right)d\rho\right]^{\frac{1}{2}},
\eeqq where $C(r)=\left(1-np\int_{0}^{r}\frac{\rho^{n-1}g_{n}(\rho,r)\lambda_{2}(\rho)}{(1-\rho^{2})^{n}}d\rho\right).$
\end{thm}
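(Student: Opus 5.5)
We plan to apply a Green-type (Riesz decomposition) formula for the invariant Laplacian to $F=|f|^{p}$ and obtain an integral inequality for $M_p^p(r,f)$, which we then resolve using the smallness hypothesis on $\lambda_2$. The key identity is the invariant Green formula on the ball $|x|<r$: for $F\in C^{2}(\mathbb{B}^{n})$,
\[
M_{1}(r,F)=F(0)+\int_{|x|<r}\Delta_{h}F(x)\,g_{n}(|x|,r)\,dV_{h}(x)
=F(0)+n\int_{0}^{r}\frac{\rho^{n-1}g_{n}(\rho,r)}{(1-\rho^{2})^{n}}\int_{\mathbb{S}^{n-1}}\Delta_{h}F(\rho\zeta)\,d\sigma(\zeta)\,d\rho .
\]
Here $g_n(\rho,r)$ is the radial invariant Green function for the ball of radius $r$. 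It vanishes at $\rho=r$ and satisfies $\frac{d}{dr}$-compatibility: differentiating $M_1(r,F)$ in $r$ gives $r^{n-1}(1-r^2)^{2-n}\frac{d}{dr}M_1$ equal to the integral of $\Delta F$. We verify this identity by integrating $\Delta F=(1-|x|^2)^{-2}\Delta_h F$ (valid when $n-2$ terms vanish on the appropriate radial expression) twice in $r$, or we quote it from \cite{Sto-2016}.

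Next we compute $\Delta_h|f|^p$, assuming $f\neq0$ or regularizing via $(f^2+\epsilon)^{p/2}$ and letting $\epsilon\to0$. The chain rule gives
\[
\Delta_h|f|^{p}=p(p-1)|f|^{p-2}|\nabla^h f|^{2}+p|f|^{p-2}f\Delta_hf .
\]
Since $f\in\mathbf{H}_{\lambda_1,\lambda_2}$, the second term is nonnegative and at most $p|f|^{p-2}(\lambda_1|\nabla^hf|^2+\lambda_2|f|^2)$. Thus
\[
0\le\Delta_h|f|^p\le p(p-1+\lambda_1)|f|^{p-2}|\nabla^hf|^2+p\lambda_2|f|^p .
\]
Substituting into the Green formula, the $\lambda_2$ term contributes at most $np\int_0^r\frac{\rho^{n-1}g_n\lambda_2}{(1-\rho^2)^n}M_p^p(\rho,f)\,d\rho$. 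Because $M_p(\rho,f)$ is nondecreasing in $\rho$ (it is invariant subharmonic), this is bounded by $(1-C(r))M_p^p(r,f)$ and can be absorbed into the left side, which produces the factor $C(r)$.

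For the gradient term we apply Hölder with exponents $p/(p-2)$ and $p/2$ on each sphere:
\[
\int|f|^{p-2}|\nabla f|^2\,d\sigma\le M_p^{p-2}(\rho,f)\,M_p^2(\rho,\nabla f).
\]
We then need to estimate $n\rho^{n-1}g_n(\rho,r)(1-\rho^2)^{2-n}$. Since $g_n(\rho,r)\le\frac1n\int_\rho^r s^{1-n}\,ds$, the product $\rho^{n-1}g_n$ is bounded by roughly $\frac{1}{n-1}$. The factor $(1-\rho^2)^{2-n}$ must be controlled by combining it with $(1-s^2)^{n-2}$ inside $g_n$; using $(1-s^2)/(1-\rho^2)\le1$ for $s\ge\rho$ gives a constant of order $\frac{1}{n-1}$, while the $2^{n-2}$ comes from $1+\rho\le2$ powers. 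One factor $M_p(\rho,\nabla f)$ is bounded via (\ref{eq-k-1}), and the other should be reduced to $\omega(1)$-type constants. This last reduction is the delicate point: the target has a single $\omega\!\left(\frac1{1-\rho}\right)$ times $\omega(1)$, so we expect to use $(1-\rho)\,\omega\!\left(\frac1{1-\rho}\right)\le\omega(1)$, which follows from $\omega(t)/t$ being nonincreasing, to absorb one factor of $(1-\rho^2)$ coming from $|\nabla^h f|^2$.

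Finally, Young's/homogeneity handling must bring $M_p^{p-2}$ against $M_p^p$ in order to obtain the square-root form. We would set $\Phi(r)=M_p^2(r,f)$, and, dividing the inequality by $M_p^{p-2}(r,f)$ using monotonicity, obtain
\[
C(r)\,M_p^2(r,f)\le|f(0)|^2+\frac{2^{n-2}pC\omega(1)}{n-1}\int_0^r(p-1+\lambda_1)\,\omega\!\left(\frac{1}{1-\rho}\right)d\rho .
\]
(Here $|f(0)|^p$ against $M_p^{p-2}$ is bounded by $|f(0)|^2$ via $|f(0)|\le M_p$.) Taking square roots then gives the theorem. We expect the main obstacle to be the bookkeeping in the Green-kernel estimate that yields exactly the constant $2^{n-2}/(n-1)$.
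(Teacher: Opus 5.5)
Your overall architecture is the paper's: Theorem J applied to $|f|^p$, the identity $\Delta_h|f|^p=p(p-1)|f|^{p-2}|\nabla^hf|^2+p|f|^{p-2}f\Delta_hf$ with the Heinz bound, absorbing the $\lambda_2$-term into $(1-C(r))M_p^p(r,f)$ by monotonicity, H\"older on spheres, $(1-\rho)\,\omega(\frac{1}{1-\rho})\le\omega(1)$, and dividing by $M_p^{p-2}(r,f)$ using $|f(0)|\le M_p(r,f)$. The one step that is wrong as written is the kernel estimate, and it is exactly the step that produces the logarithm. In polar coordinates the gradient contribution (the paper's $\mathcal{J}_1(r)$) is
\[
\int_0^r n\rho^{n-1}g_n(\rho,r)(1-\rho^2)^{2-n}\bigl(p-1+\lambda_1(\rho)\bigr)\int_{\mathbb{S}^{n-1}}|f(\rho\zeta)|^{p-2}|\nabla f(\rho\zeta)|^2\,d\sigma(\zeta)\,d\rho .
\]
The exponent $2-n$ already includes the factor $(1-\rho^2)^2$ from $|\nabla^h f|^2=(1-\rho^2)^2|\nabla f|^2$, set against $dV_h=(1-\rho^2)^{-n}dV_N$. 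So no spare factor of $1-\rho^2$ from $\nabla^h$ is left to ``absorb''; you are counting it twice. Suppose the weight $n\rho^{n-1}g_n(\rho,r)(1-\rho^2)^{2-n}$ were only bounded by a constant, as you assert. Then you would be left with $\int_0^r(p-1+\lambda_1)\,\omega(\frac{1}{1-\rho})^2\,d\rho$. For $\omega(t)=t$ this is of order $(1-r)^{-1}$, which gives only $M_p(r,f)=O((1-r)^{-1/2})$ and loses the whole point of the theorem.

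The missing observation is that the needed factor $1-\rho$ comes from the Green function itself: $g_n(\cdot,r)$ vanishes at $\rho=r$, so the weight decays linearly there. Your own comparison shows this if you keep the length of the integration interval. For $\rho\le s\le r$ you have both $(1-s^2)^{n-2}\le(1-\rho^2)^{n-2}$ and $s^{1-n}\le\rho^{1-n}$, so
\[
n\rho^{n-1}g_n(\rho,r)(1-\rho^2)^{2-n}\le r-\rho\le 1-\rho .
\]
This gives the theorem with constant $1$ in place of $2^{n-2}/(n-1)$, and $1\le 2^{n-2}/(n-1)$ for $n\ge2$. The paper instead uses $1-s^2\le(1+r)(1-s)$ and $\int_\rho^r(1-s)^{n-2}ds\le(1-\rho)^{n-1}/(n-1)$. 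This yields $g_n(\rho,r)\le\frac{2^{n-2}}{n(n-1)}\bigl(\frac{1-\rho}{\rho}\bigr)^{n-1}$, so the weight is at most $\frac{2^{n-2}}{n-1}\,\frac{1-\rho}{(1+\rho)^{n-2}}$; that is the source of the constant $2^{n-2}/(n-1)$. With this correction the rest of your argument goes through as in the paper. Separately, your suggested self-check of the Green formula via $\Delta F=(1-|x|^2)^{-2}\Delta_hF$ holds only for $n=2$, because $\Delta_h$ contains the first-order term $2(n-2)(1-|x|^2)\langle x,\nabla\rangle$. Just quote Theorem J, as the paper does.
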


If we set $\lambda_1\equiv M$, where $M$ is a nonnegative constant and $\omega(t)=t$ for $t \geq 0$ in Theorem \ref{thm-1}, we obtain the following result.

\begin{cor}
Let $p\in[2,\infty)$ be a constant. Suppose that $\lambda_1\equiv M$, where $M$ is a nonnegative constant,
and $\lambda_{2}$ is
 a nonnegative continuous functions in $[0,1)$
such that $$\sup_{r\in [0,1)}\int_{0}^{r}\frac{\rho^{n-1}g_{n}(\rho,r)\lambda_{2}(\rho)}{(1-\rho^{2})^{n}}d\rho<\frac{1}{np}.$$

 For $r\in[0,1)$, if $f\in\mathbf{H}_{\lambda_{1},\lambda_{2}}(\mathbb{B}^{n})$ such that
$$M_{p}(r,\nabla f)=O \left ( \frac{1}{1-r} \right  )  ~\mbox{ as $r\rightarrow1^{-}$},
$$
then
$$ M_{p}(r,f)=O \left (\left(\log\frac{1}{1-r}\right)^{\frac{1}{2}} \right ) ~\mbox{ as $r\rightarrow 1^{-}$}.
$$
\end{cor}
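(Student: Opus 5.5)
The corollary should follow directly from Theorem \ref{thm-1} by specializing $\lambda_1\equiv M$ and $\omega(t)=t$. The only points needing care are making the hypothesis on $M_p(r,\nabla f)$ hold for \emph{all} $r\in[0,1)$, as \eqref{eq-k-1} requires, and getting a lower bound for $C(r)$ that is uniform in $r$.

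\emph{Step 1: a global version of the gradient hypothesis.} The map $\omega(t)=t$ is clearly a majorant and $\omega(1)=1$. By hypothesis there are $r_0\in(0,1)$ and $C_1>0$ with $M_p(r,\nabla f)\leq C_1/(1-r)$ for $r\in[r_0,1)$. Since $f\in C^{2}(\mathbb{B}^n)$, $\nabla f$ is continuous on the compact set $\{|x|\leq r_0\}$. Hence $M_p(r,\nabla f)\leq \max_{|x|\leq r_0}|\nabla f(x)|=:C_2$ for $r\in[0,r_0]$. Since $1/(1-r)\geq 1$, we get
\[
M_p(r,\nabla f)\leq C\,\frac{1}{1-r}=C\,\omega\!\left(\frac{1}{1-r}\right)\quad\text{for all } r\in[0,1),
\qquad C:=\max\{C_1,C_2\}.
\]
So \eqref{eq-k-1} holds for every $r\in[0,1)$.

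\emph{Step 2: uniform lower bound for $C(r)$.} Put
\[
s_0:=\sup_{r\in[0,1)}\int_{0}^{r}\frac{\rho^{n-1}g_{n}(\rho,r)\lambda_{2}(\rho)}{(1-\rho^{2})^{n}}\,d\rho<\frac{1}{np}.
\]
Then the integral condition of Theorem \ref{thm-1} holds for every $r$. Moreover $C(r)\geq 1-nps_0=:c_0>0$ uniformly in $r\in[0,1)$.

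\emph{Step 3: apply Theorem \ref{thm-1}.} For every $r\in[0,1)$ this gives
\[
M_p(r,f)\leq c_0^{-1/2}\left[|f(0)|^2+\frac{2^{n-2}pC(p-1+M)}{n-1}\int_0^r\frac{d\rho}{1-\rho}\right]^{1/2}
=c_0^{-1/2}\left[|f(0)|^2+K\log\frac{1}{1-r}\right]^{1/2},
\]
where $K:=2^{n-2}pC(p-1+M)/(n-1)$.

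\emph{Step 4: conclude.} For $r\geq 1-e^{-1}$ we have $\log\frac{1}{1-r}\geq1$. Hence $|f(0)|^2\leq |f(0)|^2\log\frac1{1-r}$, and
\[
M_p(r,f)\leq c_0^{-1/2}\bigl(|f(0)|^2+K\bigr)^{1/2}\left(\log\frac{1}{1-r}\right)^{1/2},
\]
which is the claimed $O\bigl((\log\frac{1}{1-r})^{1/2}\bigr)$ as $r\to1^-$.

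The main obstacle is only bookkeeping, namely Steps 1 and 2: passing from the asymptotic hypothesis to the global inequality \eqref{eq-k-1} with a single constant, and using the supremum assumption on the $\lambda_2$-integral to keep $C(r)$ bounded away from $0$. All analytic content is supplied by Theorem \ref{thm-1}.
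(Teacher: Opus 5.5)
Your proposal is correct and is the paper's argument: the paper obtains this corollary in one line by taking $\lambda_1\equiv M$ and $\omega(t)=t$ in Theorem \ref{thm-1}. Your Steps 1, 2 and 4 spell out what that one line leaves implicit, namely globalizing the $O$-hypothesis via continuity of $\nabla f$ on $\{|x|\le r_0\}$, the uniform bound $C(r)\ge 1-nps_0>0$ coming from the supremum assumption, and absorbing $|f(0)|^2$ into the $\log\frac{1}{1-r}$ term.
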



Let \( \lambda : \mathbb{D} \to [0, \infty) \) be continuous and \( f = u+iv\in C^2(\mathbb{D}) \). The elliptic partial differential equation (or briefly the PDE) in the form
\be\label{Yu-1}
\Delta f(z) = \lambda(z)f(z),~z=x+iy\in\mathbb{D},
\ee
is called the {\it non-homogeneous Yukawa PDE}
(see \cite{CPR, CRW}). If \( \lambda \) in (\ref{Yu-1}) is a positive constant function, then we have the usual Yukawa PDE, which first arose from the work of the Japanese Nobel physicist Hideki Yukawa. He used this equation to describe the nuclear potential of a point charge as \( e^{-\sqrt{\lambda}r}/r \) (cf. \cite{Ar,BS,CPR}).
Replacing $\Delta$ by $\Delta_{h}$ in \eqref{Yu-1}, we consider the \textit{non-homogeneous hyperbolic Yukawa equation} in $\mathbb{B}^{n}$ as follows:
\be\label{Yu-2}
\Delta_{h}f(x)=\lambda(x)f(x),~ x\in\mathbb{B}^{n},
\ee
where $n \geq 2$, $f \in C^{2}(\mathbb{B}^{n})$, and $\lambda : \mathbb{B}^{n} \to [0, \infty)$ is a continuous function.

By Theorem \ref{thm-1}, we obtain the following result for the solutions to \eqref{Yu-2}
under the condition \eqref{Yukawa-condition}.

\begin{cor}\label{cor-Yu-1}
Let $p\in[2,\infty)$ and $\omega$ be a majorant. Suppose that $\lambda$ is a
 nonnegative continuous functions in $\mathbb{B}^{n}$ with
 \begin{align}\label{Yukawa-condition} 
 \int_{0}^{r}\frac{\rho^{n-1}g_{n}(\rho,r)\lambda_2(\rho)}{(1-\rho^{2})^{n}}d\rho&<\frac{1}{np},
\end{align} 
where $\lambda_2$ is a nonnegative continuous functions in $[0,1)$ defined by
\begin{align*}
\lambda_2(t)=\sup_{|x|=t}\lambda(x).
\end{align*}
For $r\in[0,1)$, if $f$ satisfies (\ref{Yu-2}) such that
\beqq
M_{p}(r,\nabla f)\leq C\omega\left(\frac{1}{1-r}\right),
\eeqq
then
\beqq
M_{p}(r,f)&\leq&\frac{1}{\sqrt{C_{1}(r)}}\left[|f(0)|^{2}+
\frac{2^{n-2}pC\omega(1)(p-1)}{n-1}\int_{0}^{r}
\omega\left(\frac{1}{1-\rho}\right)d\rho\right]^{\frac{1}{2}},
\eeqq
where $C_{1}(r)=\left(1-np\int_{0}^{r}\frac{\rho^{n-1}g_{n}(\rho,r)\lambda_2(\rho)}{(1-\rho^{2})^{n}}d\rho\right).$
\end{cor}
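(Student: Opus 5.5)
We deduce Corollary \ref{cor-Yu-1} directly from Theorem \ref{thm-1}. The plan is to show that every solution $f$ of \eqref{Yu-2} belongs to $\mathbf{H}_{\lambda_{1},\lambda_{2}}(\mathbb{B}^{n})$ with $\lambda_{1}\equiv 0$ and with $\lambda_{2}(t)=\sup_{|x|=t}\lambda(x)$. Then we insert $\lambda_{1}\equiv0$ into the conclusion of Theorem \ref{thm-1}.

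\textbf{Step 1: the Heinz inequality.} Multiply \eqref{Yu-2} by $f$ to get
$$f(x)\Delta_{h}f(x)=\lambda(x)|f(x)|^{2}.$$
Since $\lambda\ge0$, the left-hand side is nonnegative. Since $\lambda(x)\le\lambda_{2}(|x|)$, we also get
$$0\le f\Delta_{h}f\le 0\cdot|\nabla^{h}f|^{2}+\lambda_{2}(|x|)|f|^{2}.$$
This is exactly the defining inequality of $\mathbf{H}_{0,\lambda_{2}}(\mathbb{B}^{n})$. The function $f$ is real-valued and $C^{2}$ by the standing hypotheses, as required in that class.

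\textbf{Step 2: continuity and admissibility of $\lambda_{2}$.} We must check that $\lambda_{2}$ is a nonnegative continuous function on $[0,1)$. Nonnegativity is clear. For continuity, write $\lambda_{2}(t)=\max_{\zeta\in\mathbb{S}^{n-1}}\lambda(t\zeta)$. The map $(t,\zeta)\mapsto\lambda(t\zeta)$ is uniformly continuous on $[0,r_{0}]\times\mathbb{S}^{n-1}$ for each $r_{0}<1$. Hence $\lambda_{2}$ is continuous on $[0,r_{0}]$: the maximum over a compact set of a family that is equicontinuous in $t$ varies continuously. At $t=0$ the set $\{|x|=0\}$ is the single point $0$, so $\lambda_{2}(0)=\lambda(0)$, and the same uniform continuity argument gives continuity there. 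This step is the only genuinely non-formal point, and it is mild.

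\textbf{Step 3: apply Theorem \ref{thm-1}.} Condition \eqref{Yukawa-condition} is precisely the integral hypothesis of Theorem \ref{thm-1} for this $\lambda_{2}$, and the gradient bound is \eqref{eq-k-1}. Theorem \ref{thm-1} therefore gives the stated estimate with $C(r)=C_{1}(r)$. Because $\lambda_{1}\equiv0$, the factor $p-1+\lambda_{1}(\rho)$ becomes the constant $p-1$, which can be pulled out of the integral. This yields exactly the displayed bound in Corollary \ref{cor-Yu-1}.
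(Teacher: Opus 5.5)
Your proposal is correct and follows the paper's route. The paper derives Corollary~\ref{cor-Yu-1} in one line from Theorem~\ref{thm-1}, implicitly taking $\lambda_{1}\equiv 0$ and $\lambda_{2}(t)=\sup_{|x|=t}\lambda(x)$ via $0\le f\Delta_{h}f=\lambda f^{2}\le \lambda_{2}(|x|)f^{2}$, so that $p-1+\lambda_{1}(\rho)$ becomes $p-1$. Your explicit check that $\lambda_{2}$ is continuous on $[0,1)$ is a detail the paper simply builds into the statement, so it is a welcome supplement rather than a deviation.
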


In particular, if $\lambda\equiv0$ and $\omega(t)=t$ for $t \geq 0$ in Corollary \ref{cor-Yu-1}, then we have the following result.

\begin{cor}
Let $p\in[2,\infty)$ and let $f$ be a hyperbolic harmonic function in $\mathbb{B}^{n}$.
For $r\in[0,1)$, if
$$M_{p}(r,\nabla f)=O \left ( \frac{1}{1-r} \right  )  ~\mbox{ as $r\rightarrow1^{-}$},
$$
then
$$ M_{p}(r,f)=O \left (\left(\log\frac{1}{1-r}\right)^{\frac{1}{2}} \right ) ~\mbox{ as $r\rightarrow 1^{-}$}.
$$
\end{cor}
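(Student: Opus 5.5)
The plan is to obtain this statement as the special case $\lambda\equiv0$, $\omega(t)=t$ of Corollary \ref{cor-Yu-1}, which in turn rests on Theorem \ref{thm-1}. Since $f$ is hyperbolic harmonic, $\Delta_{h}f=0$ in $\mathbb{B}^{n}$. Hence $f$ solves the hyperbolic Yukawa equation \eqref{Yu-2} with $\lambda\equiv0$, and so $\lambda_2(t)=\sup_{|x|=t}\lambda(x)\equiv0$.

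First I check the hypotheses of Corollary \ref{cor-Yu-1}.
\begin{itemize}
\item Condition \eqref{Yukawa-condition} holds trivially, because its left-hand side is $0<\frac{1}{np}$ for every $r\in[0,1)$. For the same reason $C_1(r)=1$.
\item The identity $\omega(t)=t$ is a majorant, with $\omega(1)=1$.
\item The growth hypothesis is only asymptotic: there are $r_0\in(0,1)$ and $C_0>0$ with $M_p(r,\nabla f)\le C_0/(1-r)$ for $r\in[r_0,1)$. To get a bound valid for all $r\in[0,1)$, I use that $\nabla f$ is continuous on the compact ball $\overline{B(0,r_0)}$. So $M_p(r,\nabla f)$ is bounded on $[0,r_0]$, while $1/(1-r)\ge1$ there. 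Enlarging the constant to some $C$ therefore gives $M_p(r,\nabla f)\le C\,\omega\!\left(\frac{1}{1-r}\right)$ for all $r\in[0,1)$.
\end{itemize}
If $f$ is vector-valued rather than real-valued, I would apply the argument to each component, since the class in Theorem \ref{thm-1} consists of real-valued functions.

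Corollary \ref{cor-Yu-1} then yields
\[
M_p(r,f)\le\left[|f(0)|^2+\frac{2^{n-2}pC(p-1)}{n-1}\int_0^r\frac{d\rho}{1-\rho}\right]^{\frac12}
=\left[|f(0)|^2+\frac{2^{n-2}pC(p-1)}{n-1}\log\frac{1}{1-r}\right]^{\frac12}.
\]
As $r\to1^{-}$ the logarithm tends to infinity and dominates the constant $|f(0)|^2$. This gives $M_p(r,f)=O\!\left(\left(\log\frac{1}{1-r}\right)^{1/2}\right)$.

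All the analytic difficulty is contained in Theorem \ref{thm-1}, which I assume. The only step needing care is the one above: turning the asymptotic $O$-hypothesis into a uniform inequality on all of $[0,1)$, which is done by the compactness argument.
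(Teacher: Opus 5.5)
Your proposal is correct and follows the paper's own route. The paper obtains this corollary by setting $\lambda\equiv0$ and $\omega(t)=t$ in Corollary~\ref{cor-Yu-1}, itself a consequence of Theorem~\ref{thm-1}, so that $C_1(r)=1$ and $\int_0^r\frac{d\rho}{1-\rho}=\log\frac{1}{1-r}$. Your compactness argument, which turns the asymptotic hypothesis into a bound $M_p(r,\nabla f)\le C/(1-r)$ on all of $[0,1)$, makes explicit a step the paper leaves implicit.
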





The remainder of this paper is organized as follows.
The proofs of Theorems \ref{thm-1.1} and \ref{thm-1.2} are presented in Section \ref{Sec-2}, and Theorem \ref{thm-1} is proved in Section \ref{Sec-3}.

	\section{Hardy-Littlewood type phenomenon for the M\"obius invariant Laplacian operator}\label{Sec-2}


\subsection*{Proof of Theorem \ref{thm-1.1}}
This section is devoted to the proof of Theorem \ref{thm-1.1}. Given its complexity, we break the proof down into several lemmas.

\begin{Lem}\label{Lem-A}\rm{(}\cite[p. 19]{Z59}\rm{)}
	 For  $\nu \geqslant 1$, Minkowski's inequality in its infinite form is given by
	$$
	\left(\int_{A_{1}}\left|\int_{B_{1}} \mathcal{X}(\zeta, \xi) d \mu_{\xi}\right|^{\nu} d \mu_{\zeta}\right)^{\frac{1}{\nu}} \leqslant \int_{B_{1}}\left(\int_{A_{1}}|\mathcal{X}(\zeta, \xi)|^{\nu} d \mu_{\zeta}\right)^{\frac{1}{\nu}} d \mu_{\xi},
	$$
	where  $A_{1}$  and  $B_{1}$  are measurable sets with positive measures $ d \mu_{\zeta} $ and $ d \mu_{\xi}$, respectively, and  $\mathcal{X}$  is integrable on $ A_{1} \times B_{1}$.
\end{Lem}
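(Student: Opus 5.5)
The plan is the standard duality/Hölder argument. The key tools are Tonelli's theorem, which is legitimate here because $|\mathcal{X}|$ is measurable on $A_{1}\times B_{1}$ and the measures are assumed $\sigma$-finite (as they are in all our applications, where they are finite), and Hölder's inequality with exponents $\nu$ and $\nu'=\nu/(\nu-1)$. Throughout, set
$$F(\zeta)=\left|\int_{B_{1}}\mathcal{X}(\zeta,\xi)\,d\mu_{\xi}\right|,\qquad G(\zeta)=\int_{B_{1}}|\mathcal{X}(\zeta,\xi)|\,d\mu_{\xi},$$
so that $0\le F\le G$. It therefore suffices to prove the inequality with $F$ replaced by $G$, which means we may assume $\mathcal{X}\ge 0$.

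The case $\nu=1$ is immediate. By Tonelli's theorem,
$$\int_{A_{1}}G\,d\mu_{\zeta}=\int_{B_{1}}\int_{A_{1}}|\mathcal{X}|\,d\mu_{\zeta}\,d\mu_{\xi},$$
and this is exactly the claim.

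For $\nu>1$, write $G^{\nu}=G^{\nu-1}G$ and interchange the order of integration:
$$\int_{A_{1}}G^{\nu}\,d\mu_{\zeta}=\int_{B_{1}}\left(\int_{A_{1}}G(\zeta)^{\nu-1}|\mathcal{X}(\zeta,\xi)|\,d\mu_{\zeta}\right)d\mu_{\xi}.$$
Applying Hölder's inequality to the inner integral, for each fixed $\xi$, bounds it by
$$\Big(\int_{A_{1}}G^{\nu}\,d\mu_{\zeta}\Big)^{(\nu-1)/\nu}\Big(\int_{A_{1}}|\mathcal{X}(\zeta,\xi)|^{\nu}\,d\mu_{\zeta}\Big)^{1/\nu}.$$
Integrating in $\xi$ and then dividing both sides by $\big(\int_{A_{1}}G^{\nu}\,d\mu_{\zeta}\big)^{(\nu-1)/\nu}$ yields the desired inequality.

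The only real obstacle is justifying that division: we need $0<\int_{A_{1}}G^{\nu}\,d\mu_{\zeta}<\infty$. The case where this integral is $0$ is trivial. To rule out $+\infty$, replace $G$ by the truncation $G_{N}=\min(G,N)\,\mathbf{1}_{E_{N}}$, where $E_{N}$ is an increasing exhaustion of $A_{1}$ by sets of finite measure. Using $G_{N}^{\nu}\le G_{N}^{\nu-1}G$, the same computation goes through with $\int_{A_{1}}G_{N}^{\nu}\,d\mu_{\zeta}<\infty$, and we obtain
$$\Big(\int_{A_{1}}G_{N}^{\nu}\,d\mu_{\zeta}\Big)^{1/\nu}\le\int_{B_{1}}\Big(\int_{A_{1}}|\mathcal{X}|^{\nu}\,d\mu_{\zeta}\Big)^{1/\nu}d\mu_{\xi}.$$
Letting $N\to\infty$ and using the monotone convergence theorem completes the proof.
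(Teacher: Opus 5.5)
The paper gives no proof of this lemma; it only quotes Minkowski's integral inequality from Zygmund, so there is no in-paper argument to compare yours against.

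Your proof is the standard textbook one, and it is correct. Each step goes through:
\begin{itemize}
\item the reduction to $G(\zeta)=\int_{B_1}|\mathcal{X}(\zeta,\xi)|\,d\mu_\xi$ via $0\le F\le G$;
\item the Tonelli interchange in $\int_{A_1}G^{\nu-1}G\,d\mu_\zeta$;
\item H\"older's inequality with exponents $\nu$ and $\nu/(\nu-1)$, using $(\nu-1)\cdot\frac{\nu}{\nu-1}=\nu$;
\item the truncation $G_N=\min(G,N)\mathbf{1}_{E_N}$ with $G_N^{\nu}\le G_N^{\nu-1}G$, which makes the division legitimate;
\item monotone convergence as $N\to\infty$.
\end{itemize}

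Writing the argument out makes explicit the $\sigma$-finiteness that Tonelli requires. The quoted statement leaves this implicit, but it holds in every application in the paper, where the measures are the Haar measure on $SO(n)$, $\sigma$ on $\mathbb{S}^{n-1}$, and Lebesgue measure on $[r_2,r_1]$ or $[0,1]$, all of them finite.

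Your argument also never uses integrability of $\mathcal{X}$ on $A_1\times B_1$, beyond guaranteeing that the inner integral defining $F$ exists for almost every $\zeta$. So it actually proves the inequality, with $G$ on the left, for every product-measurable $\mathcal{X}$, where both sides may be infinite. That is slightly more than the lemma as stated.
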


Based on Lemma \ref{Lem-A}, we obtain the following results.

	\begin{lem}\label{lem-1.1}
		Let  $n \geq 2$, $\alpha \in(-1, \infty)$, $p\in [1,\infty)$  and  $\omega$  be a majorant. Suppose that \eqref{eq-CCHL-02} holds. Then there is a positive constant  $C$  such that
		\begin{align*}
		\left(\int_{SO(n)}\left|P_{\alpha}[\varphi](Rr \xi)-\varphi(R\xi)\right|^{p}d\mu(R)\right)^{\frac{1}{p}}&\leq C \|\varphi\|_{\Lambda_{\omega, p}(\mathbb{S}^{n-1})}\omega(1-r)
		\end{align*}
		for all $\varphi \in \Lambda_{\omega, p}(\mathbb{S}^{n-1})\cap C ( \mathbb{S}^{n-1})$, $ \xi \in \mathbb{S}^{n-1}$  and  $r \in[0,1)$.
	\end{lem}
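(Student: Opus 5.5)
We will write $P_\alpha[\varphi](Rr\xi)-\varphi(R\xi)$ as a single integral against the kernel and then apply Minkowski's inequality (Lemma \ref{Lem-A}). Rotation invariance of the kernel gives $P_\alpha(Rx,\eta)=P_\alpha(x,R^{-1}\eta)$. Combined with the invariance of $\sigma$ under the substitution $\eta\mapsto R\eta$, this yields
$$P_\alpha[\varphi](Rr\xi)=\int_{\mathbb{S}^{n-1}}P_\alpha(r\xi,\eta)\varphi(R\eta)\,d\sigma(\eta).$$
Let $c_\alpha(r)=\int_{\mathbb{S}^{n-1}}P_\alpha(r\xi,\eta)\,d\sigma(\eta)=P_\alpha[1](r\xi)$, which is independent of $\xi$. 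Then
$$P_\alpha[\varphi](Rr\xi)-\varphi(R\xi)=\int_{\mathbb{S}^{n-1}}P_\alpha(r\xi,\eta)\bigl(\varphi(R\eta)-\varphi(R\xi)\bigr)d\sigma(\eta)+\bigl(c_\alpha(r)-1\bigr)\varphi(R\xi).$$
When $\alpha\neq0$ the second term does not vanish, because $P_\alpha[1]\not\equiv1$.

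\emph{First term.} Lemma \ref{Lem-A} with $\nu=p$, $A_1=SO(n)$, $B_1=\mathbb{S}^{n-1}$ gives the bound
$$\int_{\mathbb{S}^{n-1}}P_\alpha(r\xi,\eta)\,\mathcal{L}_p[\varphi](\eta,\xi)\,d\sigma(\eta)\le\|\varphi\|_{\Lambda_{\omega,p}(\mathbb{S}^{n-1})}\int_{\mathbb{S}^{n-1}}P_\alpha(r\xi,\eta)\,\omega(|\eta-\xi|)\,d\sigma(\eta).$$
Continuity of $\varphi$ makes the integrand integrable on the product space. It then remains to show
$$\int_{\mathbb{S}^{n-1}}P_\alpha(r\xi,\eta)\,\omega(|\eta-\xi|)\,d\sigma(\eta)\le C\omega(1-r).$$
We reduce this to one variable by setting $\langle\eta,\xi\rangle=\cos t$ with $t\in[0,\pi]$. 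Then $d\sigma$ becomes $c_n\sin^{n-2}t\,dt$, $|\eta-\xi|=2\sin(t/2)\asymp t$, and $|r\xi-\eta|^2=(1-r)^2+4r\sin^2(t/2)\asymp(1-r)^2+t^2$ (for $r\ge1/2$, say). With $\delta=1-r$ we split the integral at $t=\delta$.

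On $t\le\delta$ the kernel is $\lesssim\delta^{1+\alpha}/\delta^{n+\alpha}$ and $\omega(t)\le\omega(\delta)$. Since $\int_0^\delta\sin^{n-2}t\,dt\lesssim\delta^{n-1}$, this piece is $\lesssim\omega(\delta)$. On $t\ge\delta$ the kernel is $\lesssim\delta^{1+\alpha}t^{-n-\alpha}$, so this piece is $\lesssim\delta^{1+\alpha}\int_\delta^\pi\omega(t)\sin^{n-2}t\,t^{-n-\alpha}\,dt$. This is exactly what \eqref{eq-CCHL-02} controls, giving $C\omega(\delta)$. For $r<1/2$ everything is bounded, and $\omega(1-r)\ge\omega(1/2)>0$, so the bound is trivial there.

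\emph{Second term.} Its $L^p(d\mu)$ norm is at most $|c_\alpha(r)-1|\,\mu(SO(n))^{1/p}\sup|\varphi|$. It therefore suffices to show $|c_\alpha(r)-1|\le C(1-r)\le C'\omega(1-r)$. The second inequality holds because $\omega(t)/t$ is nonincreasing, so $\omega(t)\ge t\,\omega(1)$ for $t\le1$. The first is the step I expect to need the most care. The function $c_\alpha(r)$ is a radial $\alpha$-harmonic function equal to $1$ on the boundary. It has the explicit hypergeometric form $c_\alpha(r)=F(\cdot,\cdot;\cdot;r^2)$, a Liu–Peng type formula, which I would cite. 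From that formula, or by differentiating the integral, one gets $c_\alpha\in C^1[0,1]$ with $c_\alpha(1)=1$, and this yields the Lipschitz bound $|1-c_\alpha(r)|\lesssim 1-r$.

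If the $C^1$ claim turns out to be delicate for $\alpha\in(-1,0)$, I would instead estimate $1-c_\alpha(r)$ directly. The idea is to compare with the $\omega(t)\equiv$ const case of the splitting above, or to use the ODE satisfied by radial $\alpha$-harmonic functions. Combining the two terms and absorbing constants gives the lemma, with the norm $\|\varphi\|_{\Lambda_{\omega,p}}$ covering both the sup part and the seminorm part.
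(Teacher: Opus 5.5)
The overall decomposition matches the paper's $I_1+I_2$, and your first term is handled correctly. The paper replaces $\omega(|\zeta-\xi|)$ by $2\omega(|x-\zeta|)$ and imports the bound on $J_1$ from \cite{chen24}; you do the polar-coordinate splitting yourself, and both routes work.

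The gap is in the second term. Your claims $c_\alpha\in C^1[0,1]$ and $|c_\alpha(r)-1|\le C(1-r)$ are false for $\alpha\in(-1,0)$. Euler's transformation applied to the Liu--Peng formula gives
\[
c_\alpha(r)=C_{n,\alpha}\,F\Bigl(-\tfrac{\alpha}{2},\,\tfrac{n}{2}-1-\tfrac{\alpha}{2};\,\tfrac{n}{2};\,r^{2}\Bigr),
\]
where $F$ is the Gauss hypergeometric function. Here $ab>0$ and $c-a-b=1+\alpha\in(0,1)$. Hence $F'(z)=\frac{ab}{c}F(a+1,b+1;c+1;z)\asymp(1-z)^{\alpha}$ as $z\to1^-$, so $c_\alpha'(r)\to+\infty$ and $1-c_\alpha(r)\asymp(1-r)^{1+\alpha}$, which is much larger than $1-r$.

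A sanity check shows the hypothesis must enter here. For $\varphi\equiv1$ the left-hand side of the lemma is exactly $|c_\alpha(r)-1|\,\mu(SO(n))^{1/p}$, so the lemma itself forces $\omega(\delta)\gtrsim\delta^{1+\alpha}$. Thus for $\alpha<0$ the second term cannot be controlled without \eqref{eq-CCHL-02}, and your argument for it never uses that hypothesis. Differentiating the integral for $c_\alpha$ with absolute values does not rescue this: it only gives $|c_\alpha'(r)|\lesssim(1-r)^{-1}$, since the true size $(1-r)^{\alpha}$ comes from cancellation.

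The repair is what the paper imports as \cite[Lemma 3.3]{chen24}: $|P_\alpha[1](x)-1|\le C\omega(1-|x|)$ under \eqref{eq-CCHL-02}. To prove it yourself, proceed in two steps.

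First, show $|1-c_\alpha(r)|\le C(1-r)^{1+\alpha}$ for $\alpha\in(-1,0)$. You can get this from the expansion of $F$ at $z=1$ or from the indicial exponents $0$ and $1+\alpha$ of the radial ODE at $r=1$. For $\alpha\ge0$ your $C^1$ argument is fine as written.

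Second, use the hypothesis to compare $(1-r)^{1+\alpha}$ with $\omega(1-r)$. For $\delta\in(0,\pi/2]$, \eqref{eq-CCHL-02} gives
\[
\delta^{1+\alpha}\int_{\pi/2}^{\pi}\frac{\omega(t)\sin^{n-2}t}{t^{n+\alpha}}\,dt\le C\omega(\delta).
\]
The integral is a fixed positive constant, so $\delta^{1+\alpha}\le C'\omega(\delta)$; this is the paper's \eqref{eq-inf}. Since $1-r\le 1<\pi/2$, this yields $|1-c_\alpha(r)|\le C\omega(1-r)$ for all $r\in[0,1)$, which closes the second term.
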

	\begin{proof}
%
	Let  $\zeta=\left(\zeta_{1}, \ldots, \zeta_{n}\right) \in \mathbb{S}^{n-1}$  and  $x=r \xi=\left(x_{1}, \ldots, x_{n}\right)$, where  $\xi \in \mathbb{S}^{n-1}$.
	By the Minkowski inequality, we have
	\begin{equation}\label{eq-1.3}
	\left(\int_{SO(n)}\left|P_{\alpha}[\varphi](Rr \xi)-\varphi(R\xi)\right|^{p}d\mu(R)\right)^{\frac{1}{p}} \leq I_{1}+I_{2},
\end{equation}
	where
	\begin{align}\label{eq-1.4}
		I_{1}&=\left(\int_{SO(n)}\left(\int_{\mathbb{S}^{n-1}}\left|P_{\alpha}(x, \zeta)\right||\varphi(R\zeta)-\varphi(R\xi)| d \sigma(\zeta)\right)^{p}d\mu(R)\right)^{\frac{1}{p}}
	\end{align}
	and
	\begin{align*}
	I_{2}&=\left(\int_{SO(n)}|\varphi(R\xi)|^{p}\left|P_{\alpha}[1](x)-1\right|^{p}d\mu(R)\right)^{\frac{1}{p}}.	
\end{align*}
	
	Note that
	$$
	\int_{SO(n)}|\varphi(R\zeta)-\varphi(R\xi)|^{p} d\mu(R) \leq\|\varphi\|^{p}_{\Lambda_{\omega, p}(\mathbb{S}^{n-1}),s} \omega(|\zeta-\xi|)^{p}
	$$
	and
	$$
	|\zeta-\xi| \leq|\xi-x|+|x-\zeta|=1-|x|+|x-\zeta| \leq 2|x-\zeta|,
	$$
	which imply that
	\begin{align}\label{eq-new-1.5}
	\int_{SO(n)}|\varphi(R\zeta)-\varphi(R\xi)|^{p} d\mu(R)
	&\leq
	\|\varphi\|^{p}_{\Lambda_{\omega, p}(\mathbb{S}^{n-1}),s} \omega(2|x-\zeta|)^{p}
	\\ \nonumber
	&\leq
	2^{p}\|\varphi\|^{p}_{\Lambda_{\omega, p}(\mathbb{S}^{n-1}),s} \omega(|x-\zeta|)^{p}.
	\end{align}
	
	It follows from Lemma \ref{Lem-A}, \eqref{eq-1.4} and \eqref{eq-new-1.5} that
	\begin{align*}
	I_{1}  &\leq \int_{\mathbb{S}^{n-1}}\left(\int_{SO(n)}\left|P_{\alpha}(x, \zeta)\right|^{p}|\varphi(R\zeta)-\varphi(R\xi)|^{p} d\mu(R)\right)^{\frac{1}{p}}d\sigma(\zeta)\\
 &= \int_{\mathbb{S}^{n-1}}\left|P_{\alpha}(x, \zeta)\right|\left(\int_{SO(n)}|\varphi(R\zeta)-\varphi(R\xi)|^{p} d\mu(R)\right)^{\frac{1}{p}}d\sigma(\zeta)\\
	&\leq2 C_{n, \alpha}\|\varphi\|_{\Lambda_{\omega, p}(\mathbb{S}^{n-1}),s}\left(1-|x|^{2}\right)^{1+\alpha} J_{1}(x),
\end{align*}
	where
	\begin{align}\label{eq-J1}
	J_{1}(x)&=\int_{\mathbb{S}^{n-1}} \frac{\omega(|x-\zeta|)}{|x-\zeta|^{n+\alpha}} d \sigma(\zeta).
	\end{align}
	Since $\omega$ satisfies \eqref{eq-CCHL-02}, by \cite[eq. (3.11)]{chen24}, we know that
	\begin{align}\label{eq-J1-estimate}
	J_{1}(x)\leq C\frac{\omega(1-r)}{(1-r)^{1+\alpha}},
	\quad |x|=r\in [0,1).
	\end{align}
	Therefore, there is a positive constant $C$ such that
	\begin{equation}\label{eq-1.6}
		I_{1}\leq C \|\varphi\|_{\Lambda_{\omega, p}(\mathbb{S}^{n-1}),s}\omega(1-r),
		\quad r\in [0,1).
	\end{equation}
	
	Next, we estimate $I_{2}$. Because $\varphi\in C(\mathbb{S}^{n-1})$, so
	$$
	I_{2}\leq \sup_{\zeta \in \mathbb{S}^{n-1}}|\varphi(\zeta)| \left|P_{\alpha}[1](x)-1\right|\operatorname{vol}(SO(n))^{\frac{1}{p}}.
	$$
	Since $\omega$ satisfies \eqref{eq-CCHL-02}, by \cite[Lemma 3.3]{chen24}, we know that
     \begin{equation}\label{eq-1.7}
     	I_{2}\leq C \sup_{\zeta \in \mathbb{S}^{n-1}}|\varphi(\zeta)| \omega(1-r)\operatorname{vol}(SO(n))^{\frac{1}{p}}.
     \end{equation}
	Thus, combining \eqref{eq-1.3}, \eqref{eq-1.6} and \eqref{eq-1.7}  gives that there is a positive constant
	$C> 0$ such that
$$\left(\int_{SO(n)}\left|P_{\alpha}[\varphi](Rr \xi)-\varphi(R\xi)\right|^{p}d\mu(R)\right)^{\frac{1}{p}}\leq C\|\varphi\|_{\Lambda_{\omega, p}(\mathbb{S}^{n-1})} \omega(1-r).$$
This completes the proof.
\end{proof}
	
	We will make use of the following formula.
For $j\in\{1,\cdots,n\}$, $\alpha\in (-1,\infty)$,
$x\in \mathbb{B}^n$ and
$\zeta \in \mathbb{S}^{n-1}$,
we have
 \begin{align*}
\frac{\partial}{\partial x_{j}}{P}_{\alpha}(x,\zeta) &= -2C_{n, \alpha}(1+\alpha)\frac{(1-|x|^{2})^{\alpha}x_j}
{|x-\zeta|^{n+\alpha}} -C_{n, \alpha}(n+\alpha)\frac{(1-|x|^{2})^{1+\alpha}(x_{j}-\zeta_{j})}{|x-\zeta|^{n+\alpha+2}}.
 \end{align*}
Then for any $R\in SO(n)$,
we obtain that
\begin{align}
\label{eq-norm-partial}
\left|\frac{\partial}{\partial x_{j}}{P}_{\alpha}(Rx,R\zeta)\right| &\leq
 2C_{n, \alpha}(1+\alpha)\frac{(1-|x|^{2})^{\alpha}}
{|x-\zeta|^{n+\alpha}} +C_{n, \alpha}(n+\alpha)\frac{(1-|x|^{2})^{1+\alpha}}{|x-\zeta|^{n+\alpha+1}}.
 \end{align}


\begin{lem}\label{thm-1b} Let  $n \geq 2$, $\alpha \in(-1, \infty)$, $p\in [1,\infty)$  and  $\omega$  be a majorant.
Suppose that \eqref{eq-CCHL-02} holds.
Then
there is a positive constant $C$
such that
\[
\left(\int_{SO(n)}|\nabla P_{\alpha}[\varphi](Rx)|^pd\mu(R)\right)^{\frac{1}{p}}\leq C\|\varphi\|_{\Lambda_{\omega, p}(\mathbb{S}^{n-1})}\frac{\omega(1-|x|)}{1-|x|}
\]
for all $\varphi \in \Lambda_{\omega, p}(\mathbb{S}^{n-1})\cap C ( \mathbb{S}^{n-1})$ and $x\in \mathbb{B}^n$,
where 
$
\nabla P_{\alpha}[\varphi](Rx)=\nabla P_{\alpha}[\varphi](y)|_{y=Rx}
$.

\end{lem}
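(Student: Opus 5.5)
Let $x=r\xi$ with $\xi\in\mathbb{S}^{n-1}$ and $r=|x|$. The plan mirrors the proof of Lemma \ref{lem-1.1}: subtract a constant so that the Lipschitz modulus of $\varphi$ appears, then move the $SO(n)$-integral inside the $\sigma$-integral with Lemma \ref{Lem-A}.

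\textbf{Rotation invariance.} Since $P_\alpha(Rx,R\zeta)=P_\alpha(x,\zeta)$ and $\sigma$ is rotation invariant, we have
$$P_{\alpha}[\varphi](Ry)=\int_{\mathbb{S}^{n-1}}P_\alpha(Ry,R\zeta)\varphi(R\zeta)\,d\sigma(\zeta).$$
Differentiating in $y$ at $y=x$ and using the chain rule, $(\nabla P_\alpha[\varphi])(Rx)=R\,\nabla_y\bigl(P_\alpha[\varphi\circ R](y)\bigr)|_{y=x}$. Since $R$ is orthogonal, it suffices to bound
$$\Bigl|\int_{\mathbb{S}^{n-1}}\nabla_xP_\alpha(x,\zeta)\,\varphi(R\zeta)\,d\sigma(\zeta)\Bigr|.$$

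\textbf{Splitting off a constant.} Write $\varphi(R\zeta)=[\varphi(R\zeta)-\varphi(R\xi)]+\varphi(R\xi)$. This gives two pieces:
$$A(R)=\int_{\mathbb{S}^{n-1}}\nabla_xP_\alpha(x,\zeta)\,[\varphi(R\zeta)-\varphi(R\xi)]\,d\sigma(\zeta),\qquad B(R)=\varphi(R\xi)\,\nabla P_\alpha[1](x).$$

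\textbf{The term $A$.} By Lemma \ref{Lem-A} and estimate \eqref{eq-norm-partial}, applied componentwise,
$$\Bigl(\int_{SO(n)}|A(R)|^pd\mu\Bigr)^{1/p}\le C\int_{\mathbb{S}^{n-1}}\Bigl(\frac{(1-|x|^2)^\alpha}{|x-\zeta|^{n+\alpha}}+\frac{(1-|x|^2)^{1+\alpha}}{|x-\zeta|^{n+\alpha+1}}\Bigr)\Bigl(\int_{SO(n)}|\varphi(R\zeta)-\varphi(R\xi)|^pd\mu\Bigr)^{1/p}d\sigma(\zeta).$$
As in \eqref{eq-new-1.5}, the inner integral is at most $2\|\varphi\|_{\Lambda_{\omega,p},s}\,\omega(|x-\zeta|)$.

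The first kernel then contributes $(1-|x|^2)^\alpha J_1(x)$, which by \eqref{eq-J1-estimate} is at most $C\omega(1-r)/(1-r)$.

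The second kernel contributes $(1-|x|^2)^{1+\alpha}J_2(x)$, where
$$J_2(x)=\int_{\mathbb{S}^{n-1}}\frac{\omega(|x-\zeta|)}{|x-\zeta|^{n+\alpha+1}}\,d\sigma(\zeta).$$
Since $|x-\zeta|\ge 1-r$, we have $|x-\zeta|^{-1}\le (1-r)^{-1}$, so $J_2\le J_1/(1-r)$. This yields the bound $C\omega(1-r)/(1-r)$ again.

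\textbf{The term $B$.} For $B$ I need $|\nabla P_\alpha[1](x)|\le C\omega(1-r)/(1-r)$. Here $P_\alpha[1]$ is radial, $P_\alpha[1](x)=F(r^2)$ with $F$ a hypergeometric-type function. I expect this to be the main obstacle.

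For $\alpha\ge 0$ I would argue directly: the crude bound $|\nabla P_\alpha[1]|\le C_1\bigl((1-r)^\alpha J_0+(1-r)^{1+\alpha}J_0'\bigr)$, with $J_0$ the analogous integrals with $\omega$ removed, gives $|\nabla P_\alpha[1]|\le C(1-r)^{-1}$, and likewise $|P_\alpha[1]|$ is bounded. This is not sharp enough, so instead I would use the explicit derivative from \cite{chen24}: $|P_\alpha[1](x)-1|\le C(1-r)^{1+\alpha}$ for $\alpha<$ some threshold, and the derivative behaves like $(1-r)^{\alpha}$ when $\alpha\in(-1,0)$, or is bounded when $\alpha\ge0$.

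Condition \eqref{eq-CCHL-02} with $\delta=1-r$ gives
$$(1-r)^{1+\alpha}\int_{1-r}^{\pi}\frac{\omega(t)\sin^{n-2}t}{t^{n+\alpha}}\,dt\le C\omega(1-r).$$
Restricting the integral to $t\in[1,\pi/2]$, where $\omega(t)\ge\omega(1)$ up to constants, shows $(1-r)^{1+\alpha}\le C\omega(1-r)$, hence $(1-r)^{\alpha}\le C\omega(1-r)/(1-r)$. Also $1\le C\omega(1-r)/(1-r)$, because $\omega(t)/t$ is non-increasing on the bounded range $t\le 1$. Either behaviour of the derivative is therefore dominated by $C\omega(1-r)/(1-r)$.

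\textbf{Conclusion.} Finally,
$$\Bigl(\int_{SO(n)}|B(R)|^p\,d\mu\Bigr)^{1/p}\le \sup|\varphi|\,\operatorname{vol}(SO(n))^{1/p}\,|\nabla P_\alpha[1](x)|.$$
Adding the bounds for $A$ and $B$ gives the claim.
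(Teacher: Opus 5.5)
Your proposal is correct and follows essentially the same route as the paper. Both split the gradient into the part with $\varphi(R\zeta)-\varphi(R\xi)$, controlled via Lemma \ref{Lem-A}, \eqref{eq-new-1.5} and the bounds for $J_1,J_2$, and the part $\varphi(R\xi)\nabla P_\alpha[1]$, for which the paper likewise just invokes \cite[eqs. (3.22)--(3.23)]{chen24}. Your deviations are minor improvements: the elementary bound $J_2\le J_1/(1-r)$ replaces the citation of \cite[eq. (3.20)]{chen24}, and you make explicit, via the consequence \eqref{eq-inf} of \eqref{eq-CCHL-02}, why the known behaviour of $\nabla P_\alpha[1]$ is dominated by $\omega(1-r)/(1-r)$ (order $(1-r)^{\alpha}$ for $\alpha\in(-1,0)$, bounded for $\alpha\ge 0$).
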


\begin{proof}
	Let $\zeta=(\zeta_{1},\ldots,\zeta_{n})\in\mathbb{S}^{n-1}$ and $x=r\xi=(x_{1},\ldots,x_{n})$, where $\xi\in\mathbb{S}^{n-1}$.
 For $j\in\{1,\cdots,n\}$, by the Minkowski inequality, we have
\begin{equation}
\label{I1I2-b}
\left(\int_{SO(n)}\left|\frac{\partial}{\partial x_{j}}P_{\alpha}[\varphi](Rx)\right|^{p}d\mu(R)\right)^{\frac{1}{p}}\leq L_1+L_2,
\end{equation}
where
\[
L_1=\left(\int_{SO(n)}\left(\int_{\mathbb{S}^{n-1}}\left|\frac{\partial}{\partial x_{j}}{P}_{\alpha}(Rx,\zeta)\right| |\varphi(\zeta)-\varphi(R\xi)|d\sigma(\zeta)\right)^{p}d\mu(R)\right)^{\frac{1}{p}}
\]
and
\[
L_2=\left(\int_{SO(n)}|\varphi(R\xi)|^{p}\left|\frac{\partial}{\partial x_{j}}P_{\alpha}[1](Rx)\right|^{p}d\mu(R)\right)^{\frac{1}{p}}.
\]

From the rotation invariance of $d\sigma(\zeta)$ and (\ref{eq-norm-partial}),  we see that,
\begin{align}\label{eq-r-1-0-b}
L_1&=\left(\int_{SO(n)}\left(\int_{\mathbb{S}^{n-1}}\left|\frac{\partial}{\partial x_{j}}{P}_{\alpha}(Rx,R\zeta)\right||\varphi(R\zeta)-\varphi(R\xi)|d\sigma(\zeta)\right)^{p}d\mu(R)\right)^{\frac{1}{p}} \\ \nonumber
&\leq  L_{1,1}+L_{1,2},
\end{align}
where
\begin{align*}
L_{1,1}
&= 2C_{n, \alpha}(1+\alpha)\left(\int_{SO(n)}\left(\int_{\mathbb{S}^{n-1}}\frac{|\varphi(R\zeta)-\varphi(R\xi)|(1-|x|^{2})^{\alpha}}{|x-\zeta|^{n+\alpha}}d\sigma(\zeta)\right)^{p}d\mu(R)\right)^{\frac{1}{p}}
\end{align*}
and
\begin{align*}
L_{1,2}
&=
C_{n, \alpha}(n+\alpha)\left(\int_{SO(n)}\left(\int_{\mathbb{S}^{n-1}}\frac{|\varphi(R\zeta)-\varphi(R\xi)|(1-|x|^{2})^{1+\alpha}}{|x-\zeta|^{n+\alpha+1}}d\sigma(\zeta)\right)^{p}d\mu(R)\right)^{\frac{1}{p}}.
\end{align*}

It follows from Lemma \ref{Lem-A}, (\ref{eq-new-1.5}) and (\ref{eq-r-1-0-b})  that

\beq\label{I1-2-b}
L_1
&\leq&4C_{n, \alpha}(1+\alpha)\|\varphi\|_{\Lambda_{\omega, p}(\mathbb{S}^{n-1}),s}(1-|x|^{2})^{\alpha}
J_{1}\\ \nonumber
&&+2C_{n, \alpha}(n+\alpha)\|\varphi\|_{\Lambda_{\omega, p}(\mathbb{S}^{n-1}),s}(1-|x|^{2})^{1+\alpha}
J_{2},
\eeq where $J_{1}$ is defined in (\ref{eq-J1}) and
$$J_{2}=\int_{\mathbb{S}^{n-1}}\frac{\omega(|x-\zeta|)}{|x-\zeta|^{n+\alpha+1}}d\sigma(\zeta).$$
%
Since $\omega$ satisfies \eqref{eq-CCHL-02}, by \cite[eq. (3.20)]{chen24}, we know that there is a positive constant $C$ such that

\be\label{eq-rt-5-b}
J_{2}\leq
 C \frac{\omega(1-r)}{(1-r)^{2+\alpha}}.
 \ee
By (\ref{eq-J1-estimate}), (\ref{I1-2-b}) and (\ref{eq-rt-5-b}), we conclude that there is a positive constant $C$ such that
\be\label{F-01-b} L_{1}\leq C \|\varphi\|_{\Lambda_{\omega, p}(\mathbb{S}^{n-1}),s}\frac{\omega(1-r)}{1-r}.\ee

Next, we estimate $L_2$.
Since $P_{0}[1](x)\equiv 1$,
it suffices to consider the case $\alpha>-1$ and $\alpha \neq 0$.
Because $\varphi\in C(\mathbb{S}^{n-1})$, so
	$$
	L_{2}
\leq \operatorname{vol}(SO(n))^{\frac{1}{p}}
\sup_{\zeta \in \mathbb{S}^{n-1}}|\varphi(\zeta)|
\sup_{R \in SO(n)}\left|\frac{\partial}{\partial x_{j}}P_{\alpha}[1](Rx)\right|.
	$$
	Since $\omega$ satisfies \eqref{eq-CCHL-02}, by \cite[eq. (3.22) and eq. (3.23)]{chen24}, we know that
     \begin{equation}\label{eq--new-1.7}
     	L_{2}\leq C \sup_{\zeta \in \mathbb{S}^{n-1}}|\varphi(\zeta)|\sup_{R \in SO(n)}\frac{\omega(1-|Rx|)}{1-|Rx|}\operatorname{vol}(SO(n))^{\frac{1}{p}}=C \sup_{\zeta \in \mathbb{S}^{n-1}}|\varphi(\zeta)|\frac{\omega(1-r)}{1-r}\operatorname{vol}(SO(n))^{\frac{1}{p}}.
     \end{equation}

Thus, combining
 (\ref{I1I2-b}), (\ref{F-01-b}) and  (\ref{eq--new-1.7})  gives that
there is a positive constant $C $ such that

\[
\left(\int_{SO(n)}|\nabla P_{\alpha}[\varphi](Rx)|^{p}d\mu(R)\right)^{\frac{1}{p}}\leq C\|\varphi\|_{\Lambda_{\omega, p}(\mathbb{S}^{n-1})}\frac{\omega(1-|x|)}{1-|x|}.
\]
The proof of this lemma is finished.
\end{proof}

	\begin{lem}\label{lem-1.2}
		Let  $n \geq 2$, $\alpha \in(-1, \infty)$, $p\in [1,\infty)$  and  $\omega$  be a majorant. Suppose that \eqref{eq-CCHL-02} holds. Then there is a positive constant $ C $ such that
		$$
		\left(\int_{SO(n)}\left|P_{\alpha}[\varphi]\left(Rr_{1} \xi\right)-P_{\alpha}[\varphi]\left(Rr_{2} \xi\right)\right|^{p}d\mu(R)\right)^{\frac{1}{p}} \leq C \|\varphi\|_{\Lambda_{\omega, p}(\mathbb{S}^{n-1})}\omega\left(r_{1}-r_{2}\right)
		$$
		for all $\varphi \in \Lambda_{\omega, p}(\mathbb{S}^{n-1})\cap C ( \mathbb{S}^{n-1})$, $\xi \in \mathbb{S}^{n-1}$  and  $r_{1}$, $r_{2}$  with  $0 \leq r_{2}<r_{1}<1$.
	\end{lem}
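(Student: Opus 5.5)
We split according to the size of $r_1-r_2$ relative to $1-r_1$, which is the standard Hardy--Littlewood dichotomy; the two cases use Lemma \ref{lem-1.1} and Lemma \ref{thm-1b} respectively. Write $\|\varphi\|$ for $\|\varphi\|_{\Lambda_{\omega, p}(\mathbb{S}^{n-1})}$ and $u=P_{\alpha}[\varphi]$.

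\emph{Case 1: $r_1-r_2\geq 1-r_1$.} Then $1-r_2=(1-r_1)+(r_1-r_2)\leq 2(r_1-r_2)$. By Minkowski's inequality in $L^p(d\mu)$ we insert $\varphi(R\xi)$:
\[
\Bigl(\int_{SO(n)}|u(Rr_1\xi)-u(Rr_2\xi)|^p d\mu(R)\Bigr)^{1/p}
\leq \Bigl(\int_{SO(n)}|u(Rr_1\xi)-\varphi(R\xi)|^p d\mu\Bigr)^{1/p}+\Bigl(\int_{SO(n)}|u(Rr_2\xi)-\varphi(R\xi)|^p d\mu\Bigr)^{1/p}.
\]
Lemma \ref{lem-1.1} bounds this by $C\|\varphi\|\bigl(\omega(1-r_1)+\omega(1-r_2)\bigr)$. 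Since $\omega$ is increasing and $\omega(2t)\leq 2\omega(t)$ (because $\omega(t)/t$ is non-increasing), this is at most $C\|\varphi\|\,\omega(r_1-r_2)$.

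\emph{Case 2: $r_1-r_2<1-r_1$.} Use the fundamental theorem of calculus along the radius:
\[
u(Rr_1\xi)-u(Rr_2\xi)=\int_{r_2}^{r_1}\langle \nabla u(Rt\xi),R\xi\rangle\,dt .
\]
Minkowski's integral inequality (Lemma \ref{Lem-A}, applied with $t$ as the inner variable) gives
\[
\Bigl(\int_{SO(n)}|u(Rr_1\xi)-u(Rr_2\xi)|^p d\mu\Bigr)^{1/p}\leq\int_{r_2}^{r_1}\Bigl(\int_{SO(n)}|\nabla u(Rt\xi)|^p d\mu(R)\Bigr)^{1/p}dt .
\]
Lemma \ref{thm-1b} bounds the integrand by $C\|\varphi\|\,\omega(1-t)/(1-t)$. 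Because $\omega(s)/s$ is non-increasing and $1-t\geq 1-r_1$ for $t\leq r_1$, we have $\omega(1-t)/(1-t)\leq \omega(1-r_1)/(1-r_1)$. Integrating over an interval of length $r_1-r_2$ gives
\[
C\|\varphi\|\,(r_1-r_2)\frac{\omega(1-r_1)}{1-r_1}\leq C\|\varphi\|\,\omega(r_1-r_2),
\]
again using monotonicity of $\omega(s)/s$, now with $r_1-r_2<1-r_1$.

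\emph{Technical points.} $\varphi$ is continuous, so $u$ is $C^1$ in $\mathbb{B}^n$ and the integrand is jointly measurable and bounded on $[r_2,r_1]\times SO(n)$; Fubini and Minkowski therefore apply. The edge case $r_2=0$ causes no trouble. The one point needing care is that Lemma \ref{thm-1b} is stated at the point $x=t\xi$ and integrates $|\nabla u(Rx)|$ over $R$. This is exactly the form needed, since $|\langle\nabla u(Rt\xi),R\xi\rangle|\leq|\nabla u(Rt\xi)|$. I expect no real obstacle beyond correctly interchanging the $t$-integral with the $L^p(d\mu)$ norm, which Lemma \ref{Lem-A} covers.
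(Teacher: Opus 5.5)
Your proof is correct and follows the paper's argument essentially step for step. You use the same dichotomy $1-r_1\le r_1-r_2$ versus $1-r_1>r_1-r_2$. In the first case you combine Minkowski's inequality with Lemma \ref{lem-1.1} and the bound $\omega(2t)\le 2\omega(t)$. In the second you use the radial fundamental theorem of calculus, Minkowski's integral inequality, Lemma \ref{thm-1b}, and monotonicity of $\omega(t)/t$.
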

	\begin{proof}
	First, assume that $ 1-r_{1} \leq r_{1}-r_{2}$. Then, we have
	$$
	1-r_{2}=\left(1-r_{1}\right)+\left(r_{1}-r_{2}\right) \leq 2\left(r_{1}-r_{2}\right) .
	$$
	Therefore, by the Minkowski inequality and Lemma \ref{lem-1.1}, there is a positive constant  $C$  such that
	\begin{align*}
		\mathcal{L}_{p}[P_{\alpha}[\varphi]]\left(r_{1} \xi, r_{2} \xi\right)
		 &\leq\left(\int_{SO(n)}\left|P_{\alpha}[\varphi](Rr_{1} \xi)-\varphi(R\xi)\right|^{p}d\mu(R)\right)^{\frac{1}{p}}\\
		&\quad +\left(\int_{SO(n)}\left|P_{\alpha}[\varphi](Rr_{2} \xi)-\varphi(R\xi)\right|^{p}d\mu(R)\right)^{\frac{1}{p}} \\
		 &\leq C\|\varphi\|_{\Lambda_{\omega, p}(\mathbb{S}^{n-1})}\left(\omega\left(1-r_{1}\right)+\omega\left(1-r_{2}\right)\right) \\
		 &\leq C\|\varphi\|_{\Lambda_{\omega, p}(\mathbb{S}^{n-1})}\left(\omega\left(r_{1}-r_{2}\right)+\omega\left(2\left(r_{1}-r_{2}\right)\right)\right. \\
		 &\leq 3 C \|\varphi\|_{\Lambda_{\omega, p}(\mathbb{S}^{n-1})}\omega\left(r_{1}-r_{2}\right).
	\end{align*}
	
	Next, we consider the case  $1-r_{1}>r_{1}-r_{2}$. Since  $$\frac{1-r_{1}}{r_{1}-r_{2}}>1,$$ by using Lemmas \ref{Lem-A} and \ref{thm-1b}, there is a positive constant  $C$  such that
	
	\begin{align*}
		\mathcal{L}_{p}[P_{\alpha}[\varphi]]\left(r_{1} \xi, r_{2} \xi\right)
		& \leq \left(\int_{SO(n)}\left(\int_{r_{2}}^{r_{1}}\left|\frac{d}{d t} P_{\alpha}[\varphi](Rt \xi)\right| d t \right)^{p}d\mu(R)\right)^{\frac{1}{p}}\\
		& \leq \left(\int_{SO(n)}\left(\int_{r_{2}}^{r_{1}}\left|\nabla P_{\alpha}[\varphi](Rt\xi) R\xi\right| d t\right)^{p}d\mu(R)\right)^{\frac{1}{p}} \\
		& \leq \int_{r_{2}}^{r_{1}}\left(\int_{SO(n)}\left|\nabla P_{\alpha}[\varphi](Rt\xi) R\xi\right|^{p}d\mu(R) \right)^{\frac{1}{p}}dt\\
		& \leq C\|\varphi\|_{\Lambda_{\omega, p}(\mathbb{S}^{n-1})} \int_{r_{2}}^{r_{1}} \frac{\omega(1-t)}{1-t} d t \\
		& \leq C\|\varphi\|_{\Lambda_{\omega, p}(\mathbb{S}^{n-1})} \frac{r_{1}-r_{2}}{1-r_{1}} \omega\left(1-r_{1}\right) \\
		& \leq C \|\varphi\|_{\Lambda_{\omega, p}(\mathbb{S}^{n-1})}\omega\left(r_{1}-r_{2}\right).
	\end{align*}
	This completes the proof.
	\end{proof}
	
	\begin{lem}\label{lem-1.3}
		Let  $n \geq 2$, $\alpha \in(-1, \infty)$, $p\in [1,\infty)$  and  $\omega$  be a majorant. Suppose that \eqref{eq-CCHL-02} holds. Then there is a positive constant $ C$  such that
		$$
		\left(\int_{SO(n)}\left|P_{\alpha}[\varphi](Rx)-P_{\alpha}[\varphi](Ry)\right|^{p}d\mu(R) \right)^{\frac{1}{p}}
		\leq C\|\varphi\|_{\Lambda_{\omega, p}(\mathbb{S}^{n-1})} \omega(|x-y|)
		$$
		for all $\varphi \in \Lambda_{\omega, p}(\mathbb{S}^{n-1})\cap C ( \mathbb{S}^{n-1})$ and $x, y \in \mathbb{B}^{n}$  with  $|x|=|y|$.
	\end{lem}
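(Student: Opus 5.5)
We prove Lemma \ref{lem-1.3} by the same two-case scheme as Lemma \ref{lem-1.2}, comparing the scale $1-r$ with the distance $|x-y|$. Write $x=r\xi$, $y=r\eta$ with $\xi,\eta\in\mathbb{S}^{n-1}$, so that $|x-y|=r|\xi-\eta|$. If $r=0$ there is nothing to prove, so we assume $r>0$.

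\emph{Case 1: $1-r\leq |x-y|$.} Insert the boundary values and use the Minkowski inequality:
\begin{align*}
\mathcal{L}_p[P_\alpha[\varphi]](x,y)\leq{}& \Big(\int_{SO(n)}|P_\alpha[\varphi](Rr\xi)-\varphi(R\xi)|^pd\mu(R)\Big)^{\frac1p}+\mathcal{L}_p[\varphi](\xi,\eta)\\
&+\Big(\int_{SO(n)}|P_\alpha[\varphi](Rr\eta)-\varphi(R\eta)|^pd\mu(R)\Big)^{\frac1p}.
\end{align*}
The first and third terms are bounded by $C\|\varphi\|_{\Lambda_{\omega,p}(\mathbb{S}^{n-1})}\omega(1-r)\le C\|\varphi\|\,\omega(|x-y|)$ by Lemma \ref{lem-1.1}. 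The middle term is at most $\|\varphi\|_{\Lambda_{\omega,p}(\mathbb{S}^{n-1}),s}\,\omega(|\xi-\eta|)$. To compare this with $\omega(|x-y|)$, note that $|\xi-\eta|\le |\xi-x|+|x-y|+|y-\eta| = 2(1-r)+|x-y|\le 3|x-y|$. Since $\omega$ is increasing and $\omega(t)/t$ is nonincreasing, $\omega(3t)\le 3\omega(t)$, which finishes this case.

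\emph{Case 2: $1-r>|x-y|$.} Here we integrate the gradient along a path of length comparable to $|x-y|$ that stays on the sphere $\{|z|=r\}$. For instance, take the shorter great-circle arc $\gamma(t)$ from $x$ to $y$ on that sphere, parametrized by arclength. Its length is at most $\frac{\pi}{2}|x-y|$, and $|\gamma(t)|=r$ throughout. Then
$$
|P_\alpha[\varphi](R x)-P_\alpha[\varphi](R y)|\le\int_0^{\ell}|\nabla P_\alpha[\varphi](R\gamma(t))|\,dt,
$$
because $R\gamma$ is an arc from $Rx$ to $Ry$ with unit speed. Applying the Minkowski inequality in its integral form (Lemma \ref{Lem-A}) and then Lemma \ref{thm-1b} at each point $\gamma(t)$ gives
$$
\mathcal{L}_p[P_\alpha[\varphi]](x,y)\le \int_0^\ell \Big(\int_{SO(n)}|\nabla P_\alpha[\varphi](R\gamma(t))|^p d\mu(R)\Big)^{\frac1p}dt\le C\|\varphi\|\,|x-y|\frac{\omega(1-r)}{1-r}.
$$
Since $|x-y|<1-r$ and $\omega(t)/t$ is nonincreasing, $\frac{\omega(1-r)}{1-r}\le\frac{\omega(|x-y|)}{|x-y|}$. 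The bound is therefore at most $C\|\varphi\|\,\omega(|x-y|)$.

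The main point needing care is the arc step. One must check that Lemma \ref{thm-1b} applies uniformly along the path, which holds because its bound depends only on $|\gamma(t)|=r$. One must also justify the interchange of the $t$-integral with the $L^p(d\mu)$ norm via Lemma \ref{Lem-A}, using measurability and continuity of $\nabla P_\alpha[\varphi]$ in $(R,t)$. Beyond these, the argument is routine.
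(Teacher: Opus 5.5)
Your proof is correct and follows the paper's argument. Both split at $1-|x|$ versus $|x-y|$, use Lemma \ref{lem-1.1} together with $|\xi-\eta|\le 3|x-y|$ in the first case, and use Minkowski plus Lemma \ref{thm-1b} along a path in the second. The one difference is the choice of path: the paper integrates along the straight segment $tRx+(1-t)Ry$, which has length exactly $|x-y|$ and satisfies $|tx+(1-t)y|\le|x|$, so the pointwise bound $\omega(1-|z|)/(1-|z|)\le\omega(1-|x|)/(1-|x|)$ follows from monotonicity of $\omega(t)/t$; your great-circle arc keeps $|z|=r$ fixed instead, at the cost of the harmless factor $\pi/2$ in the length.
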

	\begin{proof}

We may assume that $|x|=|y|>0$.		
		First, assume that $ 1-|x| \leq|x-y|$. Let $ \xi=x /|x|$  and  $\zeta=y /|y|$.
  Then, combining Lemma \ref{lem-1.1} and  $$|\xi-\zeta| \leq 3|x-y|,$$ we deduce that there is a positive constant  $C$  such that
		\begin{align*}
			\mathcal{L}_{p}[P_{\alpha}[\varphi]]\left(x, y\right)& \leq\left(\int_{SO(n)}\left|P_{\alpha}[\varphi](Rx)-\varphi(R\xi)\right|^{p}d\mu (R)\right)^{\frac{1}{p}}\\ &\quad+\left(\int_{SO(n)}\left|\varphi(R\xi)-\varphi(R\zeta)\right|^{p}d\mu(R) \right)^{\frac{1}{p}}\\
			&\quad +\left(\int_{SO(n)}\left|\varphi(R\zeta)-P_{\alpha}[\varphi](Ry)\right|^{p}d\mu(R) \right)^{\frac{1}{p}}\\
			& \leq C\|\varphi\|_{\Lambda_{\omega, p}(\mathbb{S}^{n-1})}(\omega(1-|x|)+\omega(|\xi-\zeta|)+\omega(1-|y|)) \\
			& \leq C\|\varphi\|_{\Lambda_{\omega, p}(\mathbb{S}^{n-1})}(2 \omega(|x-y|)+\omega(3|x-y|)) \\
			& \leq 5 C \|\varphi\|_{\Lambda_{\omega, p}(\mathbb{S}^{n-1})}\omega(|x-y|) .
		\end{align*}
	
		Next, we consider the case $ 1-|x|>|x-y|$. Since  $\frac{1-|x|}{|x-y|}>1$, by using Lemmas \ref{Lem-A} and \ref{thm-1b}, there is a positive constant $C$  such that
		\begin{align*}
			\mathcal{L}_{p}[P_{\alpha}[\varphi]]\left(x, y\right)
			 & \leq \left(\int_{SO(n)}\left(\int_{0}^{1}\left|\frac{d}{d t} P_{\alpha}[\varphi](t Rx+(1-t)Ry)\right| d t\right)^{p}d\mu(R) \right)^{\frac{1}{p}} \\
			& \leq \left(\int_{SO(n)}\left(\int_{0}^{1}\left|\nabla P_{\alpha}[\varphi](t Rx+(1-t) Ry)(Rx-Ry)\right| d t \right)^{p}d\mu(R) \right)^{\frac{1}{p}}\\
			&\leq \int_{0}^{1}\left( \int_{SO(n)}\left|\nabla P_{\alpha}[\varphi](t Rx+(1-t) Ry)(Rx-Ry)\right|^{p}d\mu(R)\right)^{\frac{1}{p}}d t\\
			& \leq C\|\varphi\|_{\Lambda_{\omega, p}(\mathbb{S}^{n-1})} \frac{|x-y|}{1-|x|} \omega(1-|x|) \\
			& \leq C\|\varphi\|_{\Lambda_{\omega, p}(\mathbb{S}^{n-1})} \omega(|x-y|).
		\end{align*}
		This completes the proof.
	\end{proof}

 \begin{lem}\label{lem-CCHL-1}
    	Let  $n \geq 2$, $\alpha \in(-1, \infty)$,   $\omega$  be a majorant  and $p \in[1, \infty]$  be a constant.
    	\begin{enumerate}
    	\item[{\rm $\left(\mathcal{D}_{1}\right)$}]
    	  If  $\varphi \in \Lambda_{\omega, p}(\mathbb{S}^{n-1})$  and $\varphi$  is continuous on  $\mathbb{S}^{n-1}$, then  $P_{\alpha}[\varphi] \in \Lambda_{\omega, p}(\overline{\mathbb{B}^{n}})$.
    	\item[{\rm $\left(\mathcal{D}_{2}\right)$}]  There exists a positive constant $C$ such that inequality (\ref{eq-CCHL-02}) holds for all $\delta \in (0, \pi]$.
    	\end{enumerate}
    Then,  $\left(\mathcal{D}_{2}\right) \Rightarrow\left(\mathcal{D}_{1}\right)$  for  $p \in[1, \infty]$, and
    	$\left(\mathcal{D}_{2}\right) \Leftrightarrow\left(\mathcal{D}_{1}\right)$  for  $p=\infty$.
    \end{lem}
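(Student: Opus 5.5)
The plan is to split by the value of $p$. For $p=\infty$ the statement is Theorem \ref{Thm-D}: it gives $(\mathcal{D}_2)\Leftrightarrow(\mathcal{D}_1)$ directly, since $\Lambda_{\omega,\infty}=\Lambda_{\omega}$ and functions in $\Lambda_\omega(\mathbb{S}^{n-1})$ are continuous, so the continuity hypothesis is automatic. What remains is to prove $(\mathcal{D}_2)\Rightarrow(\mathcal{D}_1)$ for $p\in[1,\infty)$. We fix $\varphi\in\Lambda_{\omega,p}(\mathbb{S}^{n-1})\cap C(\mathbb{S}^{n-1})$ and show that $\mathcal{L}_p[P_\alpha[\varphi]](x,y)\le C\|\varphi\|_{\Lambda_{\omega,p}(\mathbb{S}^{n-1})}\,\omega(|x-y|)$ for all $x,y\in\mathbb{B}^n$.

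For two arbitrary points $x,y\in\mathbb{B}^n$ with, say, $|x|\ge|y|$, we insert the auxiliary point $z=|y|\,x/|x|$ (or $z=|y|\xi$ with $\xi$ any unit vector if $x=0$). Then $z$ lies on the same ray as $x$ and has the same modulus as $y$. Minkowski's inequality in $L^p(d\mu)$ gives
\[
\mathcal{L}_p[P_\alpha[\varphi]](x,y)\le \mathcal{L}_p[P_\alpha[\varphi]](x,z)+\mathcal{L}_p[P_\alpha[\varphi]](z,y).
\]
The first term is the radial case and is bounded by Lemma \ref{lem-1.2} by $C\|\varphi\|\,\omega(|x|-|y|)$. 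The second term is the equal-modulus case and is bounded by Lemma \ref{lem-1.3} by $C\|\varphi\|\,\omega(|z-y|)$. The elementary geometric facts needed are $|x|-|y|\le |x-y|$ and $|z-y|\le |z-x|+|x-y|\le 2|x-y|$. Since $\omega$ is increasing and $\omega(2t)\le 2\omega(t)$ (because $\omega(t)/t$ is non-increasing), we get the bound $C\|\varphi\|\,\omega(|x-y|)$ on $\mathbb{B}^n$.

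It then remains to extend the estimate to $\overline{\mathbb{B}^n}$. For a boundary point $\zeta\in\mathbb{S}^{n-1}$ we set $P_\alpha[\varphi](\zeta)=\varphi(\zeta)$; this extension is continuous because $\varphi$ is continuous and the Dirichlet problem \eqref{eq-1.1} is solvable for $\alpha>-1$. For pairs involving boundary points we approximate $\zeta=\lim_{r\to1^-} r\zeta$. The integrand $|P_\alpha[\varphi](Rx)-P_\alpha[\varphi](Ry)|^p$ is then uniformly bounded by $(2\sup|\varphi|\sup_x P_\alpha[1](x))^p$, which is finite, and it converges pointwise in $R$. Dominated convergence over the finite measure space $SO(n)$, together with the continuity of $\omega$, passes the bound to the limit. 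Alternatively, Lemma \ref{lem-1.1} handles the case of one interior point and one boundary point directly, and the case of two boundary points is just the hypothesis on $\varphi$. Finally, $\sup|P_\alpha[\varphi]|\le \sup|\varphi|\cdot\sup P_\alpha[1]<\infty$, so membership in $\Lambda_{\omega,p}(\overline{\mathbb{B}^n})$ follows.

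The main obstacle is only bookkeeping: all the analytic work sits in Lemmas \ref{lem-1.1}--\ref{lem-1.3}. Two points need care. First, the constants must be uniform in $x,y$. Second, the limiting argument at the boundary must be handled correctly, in particular the dominated-convergence justification relies on the boundedness of $P_\alpha[1]$ on $\mathbb{B}^n$.
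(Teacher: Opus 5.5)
Your proposal is correct and follows the paper's proof. For $p=\infty$ you invoke Theorem \ref{Thm-D}. For $p\in[1,\infty)$ you insert an auxiliary point that splits $\mathcal{L}_p[P_\alpha[\varphi]](x,y)$, via Minkowski's inequality, into a radial piece (Lemma \ref{lem-1.2}) and an equal-modulus piece (Lemma \ref{lem-1.3}).

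Your choice $z=|y|\,x/|x|$ is just the mirror image of the paper's $z=\frac{|x|}{|y|}y$. Your dominated-convergence passage to boundary points supplies a step the paper leaves implicit, since it only checks the estimate for $x,y\in\mathbb{B}^n$.
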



	\begin{proof}
 If $p=\infty$, the result follows directly from Theorem \ref{Thm-D}.
 In the following, we  consider the case when $p\in[1,\infty)$.
 Assume that $\left(\mathcal{D}_{2}\right)$ holds.
 Let $\varphi \in \Lambda_{\omega, p}(\mathbb{S}^{n-1})\cap C ( \mathbb{S}^{n-1})$.
 We will show that there is a positive constant $C$ such that
 $$
	\mathcal{L}_{p}[P_{\alpha}[\varphi]](x,y) \leqslant C \|\varphi\|_{\Lambda_{\omega, p}(\mathbb{S}^{n-1})} \omega\left(\left|x-y\right|\right), \quad x, y\in \mathbb{B}^{n}.
	$$
We may assume that $ |x| \geq|y|$  with  $x \neq y$. If  $y=t x$  for some  $t \in[0,1]$, then the result follows from Lemma \ref{lem-1.2}. So, it suffices to consider the case  $y$ is not contained in the segment between $0$ and $ x$. Let  $z=\frac{|x|}{|y|} y$. Then
		$$|y-z| \leq|x-y| \text { and }|x-z| \leq 2|x-y|,$$
		which, together with Lemmas \ref{lem-1.2}, \ref{lem-1.3} and the Minkowski inequality, implies that there is a positive constant  $C$  such that
		\begin{align*}
			\left(\int_{SO(n)}\left|P_{\alpha}[\varphi](Rx)-P_{\alpha}[\varphi](Ry)\right|^{p}d\mu \right)^{\frac{1}{p}}
 \leq &\left(\int_{SO(n)}\left|P_{\alpha}[\varphi](Rx)-P_{\alpha}[\varphi](Rz)\right|^{p}d\mu \right)^{\frac{1}{p}}\\
			&+\left(\int_{SO(n)}\left|P_{\alpha}[\varphi](Rz)-P_{\alpha}[\varphi](Ry)\right|^{p}d\mu \right)^{\frac{1}{p}}\\
			\leq & C\|\varphi\|_{\Lambda_{\omega, p}(\mathbb{S}^{n-1})} (\omega(|x-z|)+\omega(|z-y|)) \\
			\leq& 3 C\|\varphi\|_{\Lambda_{\omega, p}(\mathbb{S}^{n-1})}  \omega(|x-y|) .
		\end{align*}
The proof of this lemma is complete.		
\end{proof}

Next, we consider the equivalence of \eqref{eq-CCHL-02} and \eqref{eq-CCHL-03}.
Assume that \eqref{eq-CCHL-02} holds.
Then for all $c\in (0, \pi)$ and $0<\lambda\leq c$,
we have
$$
\int_{c}^{\pi}\frac{\omega(t)\sin^{n-2}t}{t^{n+\alpha}}\,dt
\leq
\int_{\lambda}^{\pi}\frac{\omega(t)\sin^{n-2}t}{t^{n+\alpha}}\,dt\leq\, C
\frac{\omega(\lambda)}{\lambda^{1+\alpha}},
$$
which implies that
\be
\label{eq-inf}
\inf_{\lambda\in (0,c]}\frac{\omega(\lambda)}{\lambda^{1+\alpha}}>0.
\ee
Further, we have the following result.
\begin{lem}
\label{lem-necessary}
Let $n\geq2$, $\alpha\in(-1,\infty)$ and $\omega$ be a majorant.
For each $\varepsilon\in (0, \pi]$,
there is a positive constant $C_{\varepsilon,\alpha}$ such that
\begin{align*}
\delta^{1+\alpha}\int_{\delta}^{\pi}\frac{\omega(t)}{t^{2+\alpha}}\,dt&\leq\, C_{\varepsilon,\alpha}
\omega(\delta)
\end{align*}
for  all $\delta\in [\varepsilon, \pi]$.
\end{lem}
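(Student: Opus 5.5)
The statement is elementary: no hypothesis on $\omega$ is assumed, and $\delta$ ranges over the compact interval $[\varepsilon,\pi]$ that stays away from $0$. The plan is to bound the left-hand side above by a constant depending only on $\varepsilon,\alpha$ times $\omega(\pi)$, and then bound $\omega(\pi)$ by a constant times $\omega(\delta)$, using only monotonicity of $\omega$ and of $\omega(t)/t$.

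For the first bound, I use that $\omega$ is increasing, so $\omega(t)\le\omega(\pi)$ for $t\in[\delta,\pi]$. Since $2+\alpha>1$, this gives
$$
\delta^{1+\alpha}\int_{\delta}^{\pi}\frac{\omega(t)}{t^{2+\alpha}}\,dt\le \omega(\pi)\,\delta^{1+\alpha}\int_{\delta}^{\infty}\frac{dt}{t^{2+\alpha}}=\frac{\omega(\pi)}{1+\alpha}.
$$
This works because $\alpha>-1$, so the tail integral converges. The bound is uniform in $\delta$ and does not even need $\delta\ge\varepsilon$.

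For the second bound, I use that $\omega(t)/t$ is non-increasing on $(0,\infty)$. Hence for $\delta\in[\varepsilon,\pi]$ we have $\omega(\pi)/\pi\le\omega(\delta)/\delta$, that is,
$$
\omega(\pi)\le \frac{\pi}{\delta}\,\omega(\delta)\le\frac{\pi}{\varepsilon}\,\omega(\delta).
$$
Combining the two estimates gives the claim with $C_{\varepsilon,\alpha}=\pi/(\varepsilon(1+\alpha))$.

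There is no real obstacle here. The only point to watch is the degenerate case $\omega\equiv0$, which is trivial. If $\omega\not\equiv0$, then $\omega(\delta)>0$ for every $\delta>0$: if $\omega(\delta)=0$ for some $\delta>0$, monotonicity forces $\omega=0$ on $[0,\delta]$, and the inequality $\omega(t)/t\le\omega(\delta)/\delta=0$ for $t\ge\delta$ forces $\omega=0$ everywhere. So the division above is legitimate. The substance of the lemma is that the constant may blow up as $\varepsilon\to0$; presumably it is later combined with \eqref{eq-inf} and the comparison $\sin t\asymp t$ near $0$ to pass between \eqref{eq-CCHL-02} and \eqref{eq-CCHL-03} for small $\delta$.
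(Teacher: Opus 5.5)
Your proof is correct, and it takes a slightly different route from the paper's.

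The paper applies the majorant property pointwise inside the integral: $\omega(t)\le (t/\delta)\,\omega(\delta)$ for $t\ge\delta$. This reduces the left-hand side to $\delta^{\alpha}\omega(\delta)\int_{\delta}^{\pi}t^{-1-\alpha}\,dt$. The kernel $t^{-1-\alpha}$ is integrable at infinity only when $\alpha>0$, so this forces a three-way split: $\alpha>0$, $\alpha=0$ and $-1<\alpha<0$, with constants $1/\alpha$, $\log(\pi/\varepsilon)$ and $\varepsilon^{\alpha}\pi^{-\alpha}/(-\alpha)$ respectively.

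You instead use only monotonicity inside the integral. This keeps the kernel $t^{-2-\alpha}$, which is integrable at infinity for every $\alpha>-1$. You invoke the majorant property only once, at the endpoint, to get $\omega(\pi)\le(\pi/\varepsilon)\,\omega(\delta)$. The result is the single constant $\pi/(\varepsilon(1+\alpha))$ with no case analysis.

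What the paper's route buys is that for $\alpha>0$ its constant $1/\alpha$ does not depend on $\varepsilon$. So it shows directly that every majorant satisfies \eqref{eq-CCHL-03} on all of $(0,\pi]$ when $\alpha>0$, which is the first item of the Remark after Theorem \ref{thm-1.1}. Your constant always blows up as $\varepsilon\to0$, so it cannot give that. For the lemma as stated, and for its only use in Lemma \ref{lem-CCHL-3} (where just $\delta\in[\pi/2,\pi]$ is needed), your version is shorter and fully sufficient.

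Your discussion of $\omega\equiv0$ is harmless but unnecessary. You never divide by $\omega(\delta)$; you only multiply $\omega(\pi)/\pi\le\omega(\delta)/\delta$ by $\pi$.
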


\begin{proof}
We may assume that $\varepsilon\in (0,\pi)$.
Since $\omega(t)/t$ is non-increasing, we  have
\begin{align*}
\delta^{1+\alpha}\int_{\delta}^{\pi}\frac{\omega(t)}{t^{2+\alpha}}\,dt\leq
\delta^{\alpha}\omega(\delta)\int_{\delta}^{\pi}\frac{1}{t^{1+\alpha}}\,dt.
\end{align*}

If $\alpha>0$, then we have
\begin{align*}
\delta^{\alpha}\omega(\delta)\int_{\delta}^{\pi}\frac{1}{t^{1+\alpha}}\,dt
&\leq \delta^{\alpha}\omega(\delta)\frac{1}{\alpha}\delta^{-\alpha}
 =\frac{1}{\alpha}\omega(\delta).
\end{align*}

If $\alpha=0$, then we have
\begin{align*}
\delta^{\alpha}\omega(\delta)\int_{\delta}^{\pi}\frac{1}{t^{1+\alpha}}\,dt
&=
\omega(\delta)(\log \pi-\log \delta)
 \leq
(\log \pi-\log \varepsilon)\omega(\delta).
\end{align*}

If $\alpha \in (-1,0)$,
then we have
\begin{align*}
\delta^{\alpha}\omega(\delta)\int_{\delta}^{\pi}\frac{1}{t^{1+\alpha}}\,dt
&\leq
\delta^{\alpha}\omega(\delta)\frac{1}{-\alpha}\pi^{-\alpha}
 \leq
\frac{\varepsilon^{\alpha}\pi^{-\alpha}}{-\alpha}\omega(\delta).
\end{align*}
This completes the proof.
\end{proof}

\begin{lem}\label{lem-CCHL-3}
Let $n\geq2$, $\alpha\in(-1,\infty)$ and $\omega$ be a majorant.
Then there is a positive constant $C$ such that \eqref{eq-CCHL-02} holds for  all $\delta\in (0, \pi]$
if and only if
there is a positive constant $C$ such that \eqref{eq-CCHL-03}
holds
for  all $\delta\in (0, \pi]$.
\end{lem}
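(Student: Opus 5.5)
We compare the two integrands on subintervals of $(0,\pi]$, using $\sin t\asymp t$ away from $t=\pi$ together with the two auxiliary facts already established: Lemma \ref{lem-necessary}, which handles $\delta$ bounded away from $0$, and the lower bound \eqref{eq-inf}. Since $\sin t\le t$ on $(0,\pi]$, we have pointwise
$$\frac{\omega(t)\sin^{n-2}t}{t^{n+\alpha}}\le \frac{\omega(t)}{t^{2+\alpha}},$$
so \eqref{eq-CCHL-03} implies \eqref{eq-CCHL-02} immediately with the same constant.

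For the converse, assume \eqref{eq-CCHL-02}. On $(0,\pi/2]$ we have $\sin t\ge 2t/\pi$, so for $\delta\in(0,\pi/2]$
$$\delta^{1+\alpha}\int_\delta^{\pi/2}\frac{\omega(t)}{t^{2+\alpha}}\,dt\le (\pi/2)^{n-2}\,\delta^{1+\alpha}\int_\delta^{\pi}\frac{\omega(t)\sin^{n-2}t}{t^{n+\alpha}}\,dt\le C\omega(\delta).$$
The remaining piece is $\delta^{1+\alpha}\int_{\pi/2}^{\pi}\omega(t)t^{-2-\alpha}\,dt$. It is a constant $A$ (finite, since $\omega(t)\le 2\omega(\pi/2)$ there) times $\delta^{1+\alpha}$. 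By \eqref{eq-inf} applied with $c=\pi/2$, there is $m>0$ with $\omega(\delta)\ge m\,\delta^{1+\alpha}$ for $\delta\in(0,\pi/2]$. Hence this piece is at most $(A/m)\,\omega(\delta)$.

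For $\delta\in[\pi/2,\pi]$ we invoke Lemma \ref{lem-necessary} with $\varepsilon=\pi/2$. Combining the two ranges gives \eqref{eq-CCHL-03} on all of $(0,\pi]$.

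The only delicate point is the tail near $t=\pi$, where $\sin t$ vanishes, so \eqref{eq-CCHL-02} gives no direct control of $\int_{\pi/2}^{\pi}\omega(t)t^{-2-\alpha}\,dt$. This is exactly why the argument passes through \eqref{eq-inf}, whose derivation in the paper uses \eqref{eq-CCHL-02} on $[c,\pi]$ only as a positive lower bound. That tail term is a fixed constant multiple of $\delta^{1+\alpha}$, and \eqref{eq-inf} converts it into a multiple of $\omega(\delta)$.
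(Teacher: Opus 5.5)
Your proof is correct, but it bounds the range $\delta\in(0,\pi/2]$ by a different mechanism than the paper.

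The paper introduces $F(\delta)=\int_\delta^\pi\omega(t)\sin^{n-2}t\,t^{-n-\alpha}\,dt$ and $G(\delta)=\int_\delta^\pi\omega(t)t^{-2-\alpha}\,dt$. It uses \eqref{eq-inf} only to show that $F,G\to\infty$ as $\delta\to0^+$. It then applies L'H\^opital's rule to get $G/F\to\lim(\delta/\sin\delta)^{n-2}=1$, and concludes from continuity and positivity on $(0,\pi/2]$ that $\sup G/F<\infty$.

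You instead split at $\pi/2$. On $[\delta,\pi/2]$ you compare integrands pointwise via $\sin t\ge 2t/\pi$, and you absorb the constant tail $A\delta^{1+\alpha}$ using \eqref{eq-inf}. Both proofs handle $[\pi/2,\pi]$ with Lemma \ref{lem-necessary}.

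Your route avoids limits and the compactness argument, and gives an explicit constant $(\pi/2)^{n-2}C+A/m$. The paper's route gives the sharper asymptotic fact $G(\delta)/F(\delta)\to1$, at the cost of a non-explicit supremum.

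Your argument also shows that the divergence of $F$ and $G$ is beside the point. The bound \eqref{eq-inf} is just $F(\delta)\ge F(\pi/2)>0$ rewritten, so what you really prove is
$G(\delta)\le\bigl((\pi/2)^{n-2}+G(\pi/2)/F(\pi/2)\bigr)F(\delta)$ for all $\delta\in(0,\pi/2]$.
This holds for every nonzero majorant, with no appeal to \eqref{eq-CCHL-02} beyond the final multiplication by $\delta^{1+\alpha}$.
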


\begin{proof}
Since $\sin t\leq t$ for $t\in [0, \pi]$,
it is easy to see that \eqref{eq-CCHL-03} implies \eqref{eq-CCHL-02}.

Conversely, assume that \eqref{eq-CCHL-02} holds.
In view of Lemma \ref{lem-necessary},
every majorant $\omega$ satisfies \eqref{eq-CCHL-03}
for $\delta\in [\pi/2, \pi]$.
So, we may assume that $\delta \in (0, \pi/2]$.
Let
\begin{align*}
F(\delta)=\int_{\delta}^{\pi}\frac{\omega(t)\sin^{n-2}t}{t^{n+\alpha}}\,dt,
\quad
\delta \in (0,\pi/2]
\end{align*}
and
\begin{align*}
G(\delta)=\int_{\delta}^{\pi}\frac{\omega(t)}{t^{2+\alpha}}\,dt,
\quad
\delta \in (0,\pi/2].
\end{align*}
By \eqref{eq-inf}, we see that
\begin{align*}
G(\delta)
&\geq
\int_{\delta}^{\pi/2}\frac{\omega(t)}{t^{2+\alpha}}\,dt
 \geq
\inf_{t\in(0,\pi/2]} \frac{\omega(t)}{t^{1+\alpha}}\int_{\delta}^{\pi/2}\frac{1}{t}\,dt
\to \infty,
\quad \mbox{as } \delta\to 0^+.
\end{align*}
Also,
\begin{align*}
F(\delta)
&\geq
\left(\frac{2}{\pi}\right)^{n-2}\int_{\delta}^{\pi/2}\frac{\omega(t)}{t^{2+\alpha}}\,dt
 \geq
\left(\frac{2}{\pi}\right)^{n-2}  \inf_{t\in(0,\pi/2]} \frac{\omega(t)}{t^{1+\alpha}}  \int_{\delta}^{\pi/2}\frac{1}{t}\,dt
\to \infty,
\quad \mbox{as } \delta\to 0^+.
\end{align*}
Therefore, we have
\begin{align*}
\lim_{\delta \to 0^+}\frac{G(\delta)}{F(\delta)}
&=
\lim_{\delta \to 0^+}\frac{G'(\delta)}{F'(\delta)}
 =
\lim_{\delta \to 0^+}\left(\frac{\delta}{\sin \delta}\right)^{n-2}
 =1.
\end{align*}
Taking into account of the fact that
$F$ and $G$ are positive continuous functions on $(0,\pi/2]$,
we see that
\begin{align*}
\sup_{\delta\in (0, \pi/2]}\frac{G(\delta)}{F(\delta)}\in(0,\infty).
\end{align*}
Combing this with \eqref{eq-CCHL-02} gives that
\begin{align*}
\delta^{1+\alpha}\int_{\delta}^{\pi}\frac{\omega(t)}{t^{2+\alpha}}\,dt
\leq M \omega(\delta)
\sup_{\delta\in (0, \pi/2]}\frac{G(\delta)}{F(\delta)}
\end{align*}
for $\delta \in (0, \pi/2]$.
This completes the proof.
\end{proof}

 Theorem \ref{thm-1.1} follows readily from Lemmas \ref{lem-CCHL-1} and \ref{lem-CCHL-3}.
This completes the proof of the theorem.
\qed

		
\subsection*{Proof of Theorem \ref{thm-1.2}.}
In view of the proof of \cite[Theorem 2.2]{chen24},
we have the following lemma.

\begin{Lem}\label{Lem-I} \rm{(}\cite[Theorem 2.2]{chen24}\rm{)}
	Let  $n \geq 2$, $\alpha \in(-1, \infty)$  and  $\omega$  be a majorant. Suppose that \eqref{eq-CCHL-02} holds.
	Then there is a positive constant  $M$  such that
	$$
	\left|\nabla P_{\alpha}[\varphi](x)\right| \leq M \|\varphi\|_{\Lambda_{\omega}(\mathbb{S}^{n-1})}\frac{\omega(1-|x|)}{1-|x|}
	$$
	for all $\varphi \in \Lambda_{\omega}(\mathbb{S}^{n-1})$ and $x \in \mathbb{B}^{n}$.
\end{Lem}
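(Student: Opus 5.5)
The statement just quoted is Lemma \ref{Lem-I}, the pointwise gradient estimate, and it is the case $p=\infty$ of Lemma \ref{thm-1b}. The plan is to repeat that proof with the integral over $SO(n)$ replaced by a pointwise evaluation. Write $x=r\xi$ with $\xi\in\mathbb{S}^{n-1}$ and $r=|x|$. For each $j$, differentiate under the integral sign and subtract the constant $\varphi(\xi)$:
\[
\frac{\partial}{\partial x_j}P_{\alpha}[\varphi](x)=\int_{\mathbb{S}^{n-1}}\frac{\partial}{\partial x_j}P_{\alpha}(x,\zeta)\bigl(\varphi(\zeta)-\varphi(\xi)\bigr)\,d\sigma(\zeta)+\varphi(\xi)\frac{\partial}{\partial x_j}P_{\alpha}[1](x).
\]
This gives the analogues of $L_1$ and $L_2$, and we estimate them in turn.

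For the first term we use the pointwise bound \eqref{eq-norm-partial} on $\partial_{x_j}P_\alpha$. We combine it with $|\varphi(\zeta)-\varphi(\xi)|\le \|\varphi\|_{\Lambda_\omega,s}\,\omega(|\zeta-\xi|)\le 2\|\varphi\|_{\Lambda_\omega,s}\,\omega(|x-\zeta|)$, which holds because $|\zeta-\xi|\le 2|x-\zeta|$ and $\omega(2t)\le 2\omega(t)$. The first term is then bounded by
\[
C\|\varphi\|_{\Lambda_\omega,s}\Bigl[(1-r^2)^{\alpha}J_1(x)+(1-r^2)^{1+\alpha}J_2(x)\Bigr].
\]
Under \eqref{eq-CCHL-02}, the estimates \eqref{eq-J1-estimate} and \eqref{eq-rt-5-b} (from \cite{chen24}) give $J_1\lesssim \omega(1-r)/(1-r)^{1+\alpha}$ and $J_2\lesssim \omega(1-r)/(1-r)^{2+\alpha}$. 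Together with $1-r^2\le 2(1-r)$, the first term is therefore $\lesssim \|\varphi\|_{\Lambda_\omega,s}\,\omega(1-r)/(1-r)$. The exponent $\alpha$ may be negative, but this only worsens the constant, since $1-r\le 1-r^2$.

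For the second term, nothing is needed when $\alpha=0$ because $P_0[1]\equiv1$. For $\alpha\neq0$ we invoke \cite[eqs. (3.22)--(3.23)]{chen24}, exactly as in the estimate \eqref{eq--new-1.7}. These give $|\nabla P_\alpha[1](x)|\le C\,\omega(1-r)/(1-r)$ under \eqref{eq-CCHL-02}, which yields the bound $\sup|\varphi|\cdot C\,\omega(1-r)/(1-r)$. Summing over $j$ and combining the two terms gives the claim with $\|\varphi\|_{\Lambda_\omega(\mathbb{S}^{n-1})}=\sup|\varphi|+\|\varphi\|_{\Lambda_\omega,s}$.

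The main obstacle is the cited bound on $\nabla P_\alpha[1]$, that is, showing it is dominated by $\omega(1-r)/(1-r)$. If a direct proof is needed, I would write $P_\alpha[1](x)$ as a hypergeometric function of $r^2$ and check that its radial derivative is $O((1-r)^{\alpha})$ for $\alpha<0$ and $O(1)$ for $\alpha>0$. Then \eqref{eq-inf}, which says $\omega(t)\gtrsim t^{1+\alpha}$, together with $\omega(t)\gtrsim t$ for a majorant, shows that both cases are $\lesssim \omega(1-r)/(1-r)$. Near $r=0$ all quantities are bounded and $\omega(1-r)/(1-r)\ge\omega(1)$, so that range is trivial.
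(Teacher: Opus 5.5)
Your proposal is correct and takes essentially the paper's route. The paper defers to the proof of \cite[Theorem 2.2]{chen24}, and its own $L^p$ version, Lemma~\ref{thm-1b}, uses your splitting: the $\varphi(\zeta)-\varphi(\xi)$ term is controlled by \eqref{eq-norm-partial} with the $J_1$, $J_2$ estimates, and the $\varphi(\xi)\nabla P_{\alpha}[1]$ term by \cite[eqs. (3.22)--(3.23)]{chen24}. Your hypergeometric sketch for $\nabla P_\alpha[1]$, combined with \eqref{eq-inf} and $\omega(t)\geq \omega(1)t$ on $(0,1]$, is also valid and makes that last citation self-contained.
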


By the same reasoning as in the proof of  \cite[Lemma 2.2]{Ai2010}, we obtain the following result.
\begin{lem}\label{Lem-H}
	Let $\omega$ be a majorant. Then we have the following:
	
	$(i)$  $\omega$  is subadditive, that is, if  $s, t>0$, then  $\omega(s+t) \leqslant \omega(s)+\omega(t)$;
	
	$(ii)$  $\omega$  is doubling, that is, if  $0<s \leqslant t \leqslant 2 s$, then  $\omega(s) \leqslant \omega(t) \leqslant 2 \omega(s)$.
\end{lem}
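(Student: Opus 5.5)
The statement to prove is Lemma \ref{Lem-H}: every majorant is subadditive and doubling. Both parts rest on one fact: the map $t\mapsto \omega(t)/t$ is non-increasing on $(0,\infty)$, and $\omega$ is increasing. No other property of $\omega$ is needed.

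For $(i)$, fix $s,t>0$. Since $s\le s+t$ and $t\le s+t$, monotonicity of $\omega(\cdot)/\cdot$ gives
\[
\frac{\omega(s+t)}{s+t}\le \frac{\omega(s)}{s}
\quad\text{and}\quad
\frac{\omega(s+t)}{s+t}\le \frac{\omega(t)}{t},
\]
that is,
\[
\frac{s}{s+t}\,\omega(s+t)\le \omega(s)
\quad\text{and}\quad
\frac{t}{s+t}\,\omega(s+t)\le \omega(t).
\]
Adding these two inequalities gives $\omega(s+t)\le \omega(s)+\omega(t)$.

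For $(ii)$, let $0<s\le t\le 2s$. The lower bound $\omega(s)\le\omega(t)$ follows directly because $\omega$ is increasing. For the upper bound, monotonicity of $\omega(\cdot)/\cdot$ gives $\omega(t)/t\le \omega(s)/s$. Hence
\[
\omega(t)\le \frac{t}{s}\,\omega(s)\le 2\,\omega(s).
\]
Alternatively, one can apply $(i)$ with $t=s$ to get $\omega(2s)\le 2\omega(s)$, and then use monotonicity to get $\omega(t)\le\omega(2s)$.

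There is no genuine obstacle here. The only point to check is that the weights $s/(s+t)$ and $t/(s+t)$ sum to $1$, which is exactly what makes the two bounds in $(i)$ add up to subadditivity.
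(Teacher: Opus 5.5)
Your proof is correct. It follows the reasoning the paper borrows from Aikawa's Lemma 2.2, which is stated there for concave moduli; concavity together with $\omega(0)=0$ is what makes $\omega(t)/t$ non-increasing, whereas for a majorant this is part of the definition, so your weighted-sum argument for subadditivity and the bound $\omega(t)\le (t/s)\,\omega(s)\le 2\omega(s)$ for doubling carry over directly.
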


\begin{lem}\label{lem-3.1}
 Let  $\omega$ be a majorant  and let  $\tau_{a, \omega}$  be as in \eqref{eq-1.9.1}. Then  $\tau_{a, \omega} \in \Lambda_{\omega}(\mathbb{S}^{n-1})$  and there is a positive constant $C$  independent of  $a \in \mathbb{S}^{n-1}$  such that
$$
\left\|\tau_{a, \omega}\right\|_{\Lambda_{\omega}(\mathbb{S}^{n-1})} \leqslant C.
$$
\end{lem}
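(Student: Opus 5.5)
The norm $\|\tau_{a,\omega}\|_{\Lambda_{\omega}(\mathbb{S}^{n-1})}$ has two parts, the sup norm and the Lipschitz seminorm. I will bound each by a constant that does not depend on $a\in\mathbb{S}^{n-1}$, using only that $\omega$ is increasing and that it is subadditive (Lemma \ref{Lem-H}$(i)$).

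\emph{Sup norm.} For $\xi\in\mathbb{S}^{n-1}$ we have $|\xi-a|\leq 2$. Since $\omega$ is increasing,
\[
\sup_{\xi\in\mathbb{S}^{n-1}}|\tau_{a,\omega}(\xi)|\leq\omega(2),
\]
and this bound is independent of $a$.

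\emph{Lipschitz seminorm.} Fix $\xi_1,\xi_2\in\mathbb{S}^{n-1}$ with $\xi_1\neq\xi_2$. Without loss of generality $|\xi_1-a|\geq|\xi_2-a|$, so that $\tau_{a,\omega}(\xi_1)-\tau_{a,\omega}(\xi_2)\geq 0$.

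By the triangle inequality, $|\xi_1-a|\leq|\xi_2-a|+|\xi_1-\xi_2|$. Monotonicity of $\omega$ then gives
\[
\omega(|\xi_1-a|)\leq\omega\bigl(|\xi_2-a|+|\xi_1-\xi_2|\bigr).
\]
If $|\xi_2-a|=0$, this already reads $\omega(|\xi_1-a|)\leq\omega(|\xi_1-\xi_2|)$. Otherwise both summands are positive, and subadditivity (Lemma \ref{Lem-H}$(i)$) yields
\[
\omega(|\xi_1-a|)\leq\omega(|\xi_2-a|)+\omega(|\xi_1-\xi_2|).
\]
In either case,
\[
|\tau_{a,\omega}(\xi_1)-\tau_{a,\omega}(\xi_2)|\leq\omega(|\xi_1-\xi_2|).
\]
Hence $\|\tau_{a,\omega}\|_{\Lambda_{\omega}(\mathbb{S}^{n-1}),s}\leq 1$.

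\emph{Conclusion.} Combining the two bounds gives
\[
\|\tau_{a,\omega}\|_{\Lambda_{\omega}(\mathbb{S}^{n-1})}\leq\omega(2)+1=:C,
\]
with $C$ independent of $a$. Continuity of $\tau_{a,\omega}$ follows from the Lipschitz estimate together with $\omega(t)\to 0$ as $t\to 0^+$. Therefore $\tau_{a,\omega}\in\Lambda_{\omega}(\mathbb{S}^{n-1})$.

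The only non-routine ingredient is subadditivity, which Lemma \ref{Lem-H} supplies. If a self-contained argument is wanted, it follows from $\omega(t)/t$ being non-increasing: for $s,t>0$,
\[
\omega(s+t)=s\,\frac{\omega(s+t)}{s+t}+t\,\frac{\omega(s+t)}{s+t}\leq s\,\frac{\omega(s)}{s}+t\,\frac{\omega(t)}{t}=\omega(s)+\omega(t).
\]
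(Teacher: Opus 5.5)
Your proof is correct and follows essentially the same route as the argument the paper defers to (Aikawa's Lemma 2.4 in \cite{Ai2010}). Monotonicity and subadditivity of $\omega$ give $|\tau_{a,\omega}(\xi_1)-\tau_{a,\omega}(\xi_2)|\le\omega(|\xi_1-\xi_2|)$, and $\sup\tau_{a,\omega}\le\omega(2)$, so $\|\tau_{a,\omega}\|_{\Lambda_{\omega}(\mathbb{S}^{n-1})}\le 1+\omega(2)$ uniformly in $a$.
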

\begin{proof}
The proof of the lemma is the same to that of \cite[Lemma 2.4]{Ai2010}, so  we omit it here.
\end{proof}

\begin{lem}\label{lem-omega}
Let  $\omega$  be a majorant and let $\alpha>-1$.
Assume that one of the following conditions is satisfied:

      $(i)$  $\left\|P_{\alpha}\right\|_{\omega\to\omega}<\infty$;

      $(ii)$ there is a constant  $C$  such that
      $$
      P_{\alpha} [\tau_{e_1, \omega}](x) \leq C \omega(|x-e_1|) \quad \text { for } x \in \mathbb{B}^{n},
      $$
      where $e_{1}=(1,0, \ldots, 0)$ is the standard basis vector.

      Then $\omega$ satisfies \eqref{eq-CCHL-02}.
\end{lem}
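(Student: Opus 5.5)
The plan is to first reduce $(i)$ to $(ii)$, and then extract \eqref{eq-CCHL-02} from $(ii)$ by testing at the radial points $x=re_1$ and bounding the Poisson integral from below.

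\emph{Step 1: $(i)\Rightarrow(ii)$.} By Lemma \ref{lem-3.1}, $\|\tau_{e_1,\omega}\|_{\Lambda_\omega(\mathbb{S}^{n-1})}\le C$. Hence $(i)$ gives $\|P_\alpha[\tau_{e_1,\omega}]\|_{\Lambda_\omega(\mathbb{B}^n)}\le C\|P_\alpha\|_{\omega\to\omega}$. Since $\tau_{e_1,\omega}$ is continuous and $\alpha>-1$, $P_\alpha[\tau_{e_1,\omega}]$ extends continuously to $\overline{\mathbb{B}^n}$ with boundary value $\tau_{e_1,\omega}(e_1)=\omega(0)=0$ at $e_1$. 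The Lipschitz bound therefore gives
\[
P_\alpha[\tau_{e_1,\omega}](x)=P_\alpha[\tau_{e_1,\omega}](x)-P_\alpha[\tau_{e_1,\omega}](e_1)\le C\omega(|x-e_1|).
\]
So it suffices to derive \eqref{eq-CCHL-02} from $(ii)$.

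\emph{Step 2: lower bound for $P_\alpha[\tau_{e_1,\omega}](re_1)$.} Fix $x=re_1$ with $\delta:=1-r\in(0,1)$. Restrict the Poisson integral to the set $E_\delta=\{\zeta\in\mathbb{S}^{n-1}:|\zeta-e_1|\ge\delta\}$. On $E_\delta$ we have $|re_1-\zeta|\le|\zeta-e_1|+\delta\le 2|\zeta-e_1|$, and $(1-r^2)\ge\delta$. Therefore
\[
C\omega(\delta)\ \ge\ P_\alpha[\tau_{e_1,\omega}](re_1)\ \ge\ c_{n,\alpha}\,\delta^{1+\alpha}\int_{E_\delta}\frac{\omega(|\zeta-e_1|)}{|\zeta-e_1|^{n+\alpha}}\,d\sigma(\zeta).
\]
Next pass to the polar angle $\theta$ from $e_1$. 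Here $d\sigma=c_n\sin^{n-2}\theta\,d\theta$ after integrating out the other directions, and $|\zeta-e_1|=2\sin(\theta/2)\in[\tfrac{2}{\pi}\theta,\theta]$. By monotonicity and the doubling property (Lemma \ref{Lem-H}), $\omega(|\zeta-e_1|)\ge\tfrac12\omega(\theta)$, and $|\zeta-e_1|^{-(n+\alpha)}\ge c\,\theta^{-(n+\alpha)}$ (the case $n+\alpha>0$ always holds). The set $E_\delta$ contains $\{\theta\ge\pi\delta/2\}\supset\{\theta\ge 2\delta\}$. This yields
\[
\delta^{1+\alpha}\int_{2\delta}^{\pi}\frac{\omega(t)\sin^{n-2}t}{t^{n+\alpha}}\,dt\le C\omega(\delta),\qquad \delta\in(0,1).
\]

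\emph{Step 3: adjusting ranges.} For the missing piece $[\delta,2\delta]$, use $\omega(t)\le 2\omega(\delta)$ and $\sin t\le t$. Its contribution is at most $\delta^{1+\alpha}\cdot 2\omega(\delta)\int_\delta^{2\delta}t^{-2-\alpha}dt\le C\omega(\delta)$. This gives \eqref{eq-CCHL-02} for $\delta\in(0,1)$. For $\delta\in[1,\pi]$, the bound follows as in Lemma \ref{lem-necessary}: use $\sin^{n-2}t\le t^{n-2}$ and $\omega(t)/t$ non-increasing, which gives a constant depending only on $\alpha$ since $\delta$ stays away from $0$.

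The main obstacle is Step 2: one must check carefully that the restriction to $E_\delta$ keeps $|re_1-\zeta|$ comparable to $|\zeta-e_1|$ with a positive power. One must also track the comparisons $2\sin(\theta/2)\asymp\theta$ together with doubling so that all constants stay uniform in $\delta$. The rest is bookkeeping.
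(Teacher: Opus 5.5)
Your proof is correct, but it is organized differently from the paper's. The paper treats the two hypotheses separately and relies on \cite{chen24} for both. Under $(i)$ it simply invokes \cite[Theorem 2.1]{chen24}. Under $(ii)$ it uses the auxiliary datum $\varphi(\zeta)=\omega(\arccos\zeta_1)$ and imports from the proof in \cite{chen24} both the reduction to $\delta\in(0,1)$ and the lower bound $(1-r)^{1+\alpha}\int_{1-r}^{\pi}\omega(t)\sin^{n-2}t\,t^{-n-\alpha}\,dt\le C'P_{\alpha}[\varphi](re_1)$. It then passes to $\tau_{e_1,\omega}$ via the pointwise comparison $\varphi\le\frac{\pi}{2}\tau_{e_1,\omega}$.

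You instead reduce $(i)$ to $(ii)$ through Lemma \ref{lem-3.1} and the boundary continuity of $P_{\alpha}[\tau_{e_1,\omega}]$. That continuity needs only $\alpha>-1$ and continuous data, not \eqref{eq-CCHL-02}, so there is no circularity. You then prove the lower bound directly for $\tau_{e_1,\omega}$, using the set $E_\delta$, the comparison $2\sin(\theta/2)\in[\frac{2}{\pi}\theta,\theta]$ and doubling. The short range $[\delta,2\delta]$ and the range $\delta\in[1,\pi]$ (via Lemma \ref{lem-necessary}) you handle by hand.

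The paper's version is shorter on the page. Yours is self-contained, and it only ever uses the single datum $\tau_{e_1,\omega}$. It therefore in effect gives an independent proof of the necessity direction $(\mathcal{A}1)\Rightarrow(\mathcal{A}2)$ in Theorem D. It also shows that the appeal to this lemma in the paper's proof of $(i)\Rightarrow(ii)$ of Theorem \ref{thm-1.2} is unnecessary.
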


\begin{proof}
First, assume that (i) holds.
Then by \cite[Theorem 2.1]{chen24},
$\omega$ satisfies \eqref{eq-CCHL-02}.

Next, assume that
(ii) holds.
By the proof of \cite[Theorem 2.1]{chen24},
it suffices to prove \eqref{eq-CCHL-02}
in the case $\delta \in (0,1)$.
Let the function  $\varphi(\zeta)=\omega\left(\arccos \zeta_{1}\right)$  be defined on the unit sphere  $\mathbb{S}^{n-1}$, where  $\zeta_{1}$  is the first coordinate of  $\zeta$.
The angle  $\theta_{\zeta}=\arccos \zeta_{1}$  represents the angle between  $\zeta$  and $e_{1}$, so  $\varphi(\zeta)=\omega (\theta_{\zeta} )$.
By the computation in the proof of \cite[Theorem 2.1]{chen24},
we obtain that
there exists a positive constant $C'$ such that
\begin{align}\label{eq-P}
(1-r)^{1+\alpha}\int_{1-r}^{\pi} \frac{\omega(t)\sin^{n-2}t}{t^{n+\alpha}}\,dt
&\leq
C'P_{\alpha}[\varphi](r,0,\ldots,0),
\quad
r\in (0,1).
\end{align}
Since  $\arccos (1-u)\leq \frac{\pi \sqrt{2}}{2}\sqrt{u} $  for  $u \in[0,2]$,
we have
\begin{align*}
\theta_{\zeta}&=\arccos\left(1-\frac{|\zeta-e_1|^2}{2}\right)
\leq \frac{\pi}{2}|\zeta-e_1|,
\end{align*}
which implies that
\begin{align}\label{eq-omega}
\varphi(\zeta)&\leq \omega\left( \frac{\pi}{2}|\zeta-e_1|\right)
\leq
\frac{\pi}{2}\omega\left(|\zeta-e_1|\right)
=
\frac{\pi}{2} \tau_{e_1, \omega}(\zeta).
\end{align}
Combining \eqref{eq-P}, \eqref{eq-omega} and assumption (ii),
we have
\begin{align*}
(1-r)^{1+\alpha}\int_{1-r}^{\pi} \frac{\omega(t)\sin^{n-2}t}{t^{n+\alpha}}\,dt
&\leq
C'P_{\alpha}[\varphi](r,0,\ldots,0)
\\
&\leq
\frac{C'\pi}{2} P_{\alpha}[\tau_{e_1, \omega}](r,0,\ldots,0)
\\
&\leq
\frac{C C'\pi}{2}\omega\left( 1-r \right)
\quad
r\in (0,1),
\end{align*}
which implies that $\omega$ satisfies \eqref{eq-CCHL-02}
for $\delta \in (0,1)$.
This completes the proof.
\end{proof}

We now proceed to the proof of Theorem \ref{thm-1.2}.
First, we show $(i)  \Rightarrow  (ii)$. Suppose  $\left\|P_{\alpha}\right\|_{\omega \rightarrow \omega}<\infty$. Then,
by Lemma \ref{lem-omega}, we know that
$\omega$ satisfies \eqref{eq-CCHL-02}.
Also, Lemma \ref{lem-3.1} gives
$$
\left\|P_{\alpha}[ \tau_{a, \omega}]\right\|_{\Lambda_{\omega}(\mathbb{B}^{n})} \leqslant\left\|P_{\alpha}\right\|_{\omega \rightarrow \omega}\left\|\tau_{a, \omega}\right\|_{\Lambda_{\omega}(\mathbb{S}^{n-1})} \leqslant C\left\|P_{\alpha}\right\|_{\omega \rightarrow \omega}<\infty.
$$
Hence
$$
\left|P_{\alpha}[ \tau_{a, \omega}](x)-P_{\alpha} [\tau_{a, \omega}](y)\right| \leqslant C \omega(|x-y|) \quad \text { for } x, y \in \mathbb{B}^{n}.
$$

Letting  $y \rightarrow a$  gives
$$
P_{\alpha} [\tau_{a, \omega}](x) \leqslant C \omega\left(\left|x-a\right|\right).
$$

Next, we prove $(ii)  \Rightarrow  (i)$. Suppose $(ii)$ holds. Let  $f \in \Lambda_{\omega}(\mathbb{S}^{n-1})$,  and let  $x, y \in \mathbb{B}^{n}$.
Since $$|P_{\alpha}[f](x)|\leq \|f\|_{\Lambda_{\omega}(\mathbb{S}^{n-1})}|P_{\alpha}[1](x)|$$
and $P_{\alpha}[1]$ is bounded in $\mathbb{B}^n$,
it is sufficient to show that
$$
\left|P_{\alpha} [f](x)-P_{\alpha} [f](y)\right| \leqslant C\|f\|_{\Lambda_{\omega}(\mathbb{S}^{n-1})} \omega(|x-y|).
$$
We may assume that $0<|x|\leq |y|$.

Let us consider two cases.

\noindent {\rm $\mathbf{Case~1.}$} $|x-y| \leqslant \frac{1}{2} (1-|x|)$.

For $z \in \mathbb{B}^{n}\left(x, \frac{1}{2} (1-|x|)\right)=\left\{z\in \mathbb{R}^{n}:|z-x|<\frac{1}{2} (1-|x|)\right\}$, we have
\be\label{Vb-1}
\frac{\omega\left(1-|z|\right)}{1-|z|}\leq\frac{\omega\left(1-\left(|x|+\frac{1-|x|}{2}\right)\right)}{1-\left(|x|+\frac{1-|x|}{2}\right)}=\frac{\omega\left(\frac{1-|x|}{2}\right)}{\frac{1}{2}(1-|x|)}\leq \frac{2\omega\left(1-|x|\right)}{1-|x|}.
\ee
Since  $\omega$ satisfies \eqref{eq-CCHL-02}
by Lemma \ref{lem-omega},
it follows from (\ref{Vb-1}) and  Lemma \ref{Lem-I}  that
$$
\left|\nabla P_{\alpha}[f] (z)\right| \leqslant C\|f\|_{\Lambda_{\omega}(\mathbb{S}^{n-1})} \frac{\omega\left(1-|z|\right)}{1-|z|}\leqslant C\|f\|_{\Lambda_{\omega}(\mathbb{S}^{n-1})} \frac{\omega\left(1-|x|\right)}{1-|x|}
$$
for $z \in B\left(x, \frac{1}{2} (1-|x|)\right)$.
Then, by the mean value theorem, Lemma \ref{Lem-H} ($ii$) and the fact that $\omega(t) / t$  is non-increasing, we obtain
\begin{align}\label{eq-case3.1}
\left|P_{\alpha}[ f](x)-P_{\alpha}[ f](y)\right| \leqslant C\|f\|_{\Lambda_{\omega}(\mathbb{S}^{n-1})} \frac{\omega\left(1-|x|\right)}{1-|x|}|x-y| \leqslant C\|f\|_{\Lambda_{\omega}(\mathbb{S}^{n-1})} \omega(|x-y|).
\end{align}
 \noindent {\rm $\mathbf{Case~2.}$} $|x-y|>\frac{1}{2} (1-|x|)$.

 By assumption, we have  $$|x-y|>\frac{1}{2} (1-|x|) \geqslant \frac{1}{2} (1-|y|).$$
 Let 
$x^*=x/|x|$ and $y^*=y/|y|$.
Then
$$
\left|x^{*}-y^{*}\right| \leqslant|x-y|+|x-x^{*}|+|y-y^{*}| \leqslant 5|x-y|.
$$

Set $g(x)=:P_{\alpha}[1](x)$ in $\mathbb{B}^{n}$.
By $(ii)$ and the facts that $f \in \Lambda_{\omega}(\mathbb{S}^{n-1})$ and $g$ is bounded in $\mathbb{B}^n$, we obtain
\begin{align*}
	\left|P_{\alpha} [f](x)-g(x)f\left(x^{*}\right)\right| & =\left|P_{\alpha} [f_{0}](x)\right| \leqslant \|f\|_{\Lambda_{\omega}(\mathbb{S}^{n-1})} P_{\alpha} [\tau_{x^{*}, \omega}](x) \\
	&  \leqslant C\|f\|_{\Lambda_{\omega}(\mathbb{S}^{n-1})} \omega\left(1-|x|\right)\\
&\leqslant C\|f\|_{\Lambda_{\omega}(\mathbb{S}^{n-1})} \omega(|x-y|),
\end{align*}
where $f_0(\zeta)=f(\zeta)-f(x^{*})$ for $\zeta \in \mathbb{S}^{n-1}$
and the doubling property of  $\omega$  is used in the last inequality. Similarly, we have
$$
\left|P_{\alpha} [f](y)-g(y)f\left(y^{*}\right)\right| \leqslant C\|f\|_{\Lambda_{\omega}(\mathbb{S}^{n-1})} \omega(|x-y|).
$$
Since  $\omega$ satisfies \eqref{eq-CCHL-02} by Lemma \ref{lem-omega}, it follows from
\cite[Theorem 2.3]{chen24} and the doubling property of $\omega$ that
\begin{align*}
	\left|g(x)f\left(x^{*}\right)-g(y)f\left(y^{*}\right)\right|
&\leq|g(x)|\cdot |f\left(x^{*}\right)-f\left(y^{*}\right)|+|f(y^{*})| \cdot |g(x)-g(y)|\\
	&\leq C\|f\|_{\Lambda_{\omega}(\mathbb{S}^{n-1})} \omega ( |x^{*}-y^{*} |)+C\|f\|_{\Lambda_{\omega}(\mathbb{S}^{n-1})}\omega(|x-y|) \\
	&\leq C\|f\|_{\Lambda_{\omega}(\mathbb{S}^{n-1})} \omega(|x-y|).
\end{align*}
Combining the above inequalities gives
\begin{align}\label{eq-case3.2}
\left|P_{\alpha} [f](x)-P_{\alpha} [f](y)\right| \leqslant C\|f\|_{\Lambda_{\omega}(\mathbb{S}^{n-1})}\omega(|x-y|).
\end{align}
Thus $(i)$ follows from \eqref{eq-case3.1} and \eqref{eq-case3.2}.
The proof of this theorem is complete.
\qed

\section{The Girela-Pel\'aez conjecture for the M\"obius invariant Laplacian operator}\label{Sec-3}

Before proving Theorem \ref{thm-1}, let us recall some well-known results.

\begin{Thm}{\rm (\cite[Theorem  4.1.1]{Sto-2016})}\label{Thm-J}
Suppose that $n\geq2$ and $\Psi$ is a twice continuously differentiable function of $\mathbb{B}^{n}$ into $\mathbb{R}$.
Then, for $r\in(0,1)$, \beqq
\int_{\mathbb{S}^{n-1}}\Psi(r\zeta)d\sigma(\zeta)=\Psi(0)+\int_{\mathbb{B}^{n}_{r}}g_{n}(|x|,r)\Delta_{h}\Psi(x)dV_{h}(x)
\eeqq and
\beqq
\frac{d}{dr}\int_{\mathbb{S}^{n-1}}\Psi(r\zeta)d\sigma(\zeta)=\frac{1}{n}r^{1-n}(1-r^{2})^{n-2}\int_{\mathbb{B}^{n}_{r}}\Delta_{h}\Psi(x)dV_{h}(x),
\eeqq
where $g_{n}$ is defined in Theorem \ref{thm-1} and $\mathbb{B}^{n}_{r}=\{x\in\mathbb{R}^{n}:~|x|<r\}$.
\end{Thm}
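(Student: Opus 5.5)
The plan is to prove the derivative formula first, by writing the invariant Laplacian as a divergence, and then to integrate it in $r$ and swap the order of integration to get the Green-type representation with kernel $g_{n}$.

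\textbf{Step 1: divergence form.} For $\alpha=n-2$ the constant term $\alpha(n-2-\alpha)$ vanishes. Hence
$$\Delta_{h}\Psi(x)=(1-|x|^{2})\left[(1-|x|^{2})\Delta\Psi(x)+2(n-2)\langle\nabla\Psi(x),x\rangle\right].$$
Dividing by $(1-|x|^{2})^{n}$ and using $\nabla (1-|x|^{2})^{2-n}=2(n-2)(1-|x|^{2})^{1-n}x$, I expect the identity
$$\frac{\Delta_{h}\Psi(x)}{(1-|x|^{2})^{n}}={\rm div}\left((1-|x|^{2})^{2-n}\nabla\Psi(x)\right),\quad x\in\mathbb{B}^{n}.$$
This identity is the key observation; everything afterwards is bookkeeping.

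\textbf{Step 2: the derivative formula.} Apply the divergence theorem on $\mathbb{B}^{n}_{r}$ with $r<1$, where everything is $C^{1}$ up to the boundary. With the normalization $V_{N}(\mathbb{B}^{n})=1$ we have $dV_{N}=n\rho^{n-1}\,d\rho\,d\sigma(\zeta)$. The boundary term therefore becomes
$$\int_{\mathbb{B}^{n}_{r}}\Delta_{h}\Psi\,dV_{h}=n r^{n-1}(1-r^{2})^{2-n}\int_{\mathbb{S}^{n-1}}\langle\nabla\Psi(r\zeta),\zeta\rangle\,d\sigma(\zeta).$$
Differentiating under the integral sign, which is legitimate since $\Psi\in C^{1}$, the last spherical integral equals $\frac{d}{dr}\int_{\mathbb{S}^{n-1}}\Psi(r\zeta)\,d\sigma(\zeta)$. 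Rearranging gives the second formula of Theorem \ref{Thm-J}.

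\textbf{Step 3: integrate in $r$ and use Fubini.} Set $M(s)=\int_{\mathbb{S}^{n-1}}\Psi(s\zeta)\,d\sigma(\zeta)$, so that $M(0)=\Psi(0)$. Integrating Step 2 from $0$ to $r$ gives
$$M(r)-\Psi(0)=\int_{0}^{r}\frac{1}{n}s^{1-n}(1-s^{2})^{n-2}\int_{\mathbb{B}^{n}_{s}}\Delta_{h}\Psi\,dV_{h}\,ds.$$
Next I write the inner integral in polar coordinates as $\int_{0}^{s}n\rho^{n-1}(1-\rho^{2})^{-n}\int_{\mathbb{S}^{n-1}}\Delta_{h}\Psi(\rho\zeta)\,d\sigma\,d\rho$. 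Interchanging the $s$- and $\rho$-integrations over $0<\rho<s<r$ collects the factor $\frac{1}{n}\int_{\rho}^{r}s^{1-n}(1-s^{2})^{n-2}\,ds=g_{n}(\rho,r)$. Reassembling in polar coordinates yields $\int_{\mathbb{B}^{n}_{r}}g_{n}(|x|,r)\Delta_{h}\Psi(x)\,dV_{h}(x)$, which is the first formula.

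\textbf{Main obstacle.} The only delicate point is justifying Fubini near $s=0$, where the factor $s^{1-n}$ is singular. However, $\Delta_{h}\Psi$ is bounded on $\mathbb{B}^{n}_{r}$, so $\int_{\mathbb{B}^{n}_{s}}|\Delta_{h}\Psi|\,dV_{h}=O(s^{n})$. The integrand in $s$ is therefore $O(s)$, and the same bound shows that the double integral of the absolute value is finite. Tonelli's theorem then permits the interchange. Equivalently, $g_{n}(\rho,r)\rho^{n-1}=O(\rho)$ for $n\geq3$ and $O(\rho|\log\rho|)$ for $n=2$, which is integrable in $\rho$.
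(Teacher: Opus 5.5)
The paper does not prove this statement; it quotes it from Stoll's monograph \cite{Sto-2016}, so there is no in-paper argument to compare against. Your proof is correct and self-contained. The identity $(1-|x|^{2})^{-n}\Delta_{h}\Psi={\rm div}\bigl((1-|x|^{2})^{2-n}\nabla\Psi\bigr)$, the normalization $dV_{N}=n\rho^{n-1}\,d\rho\,d\sigma$, the flux computation, the integration from $0$ to $r$, and the Tonelli justification near $s=0$ all check out. This divergence-theorem route is the standard derivation of the formula.
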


\begin{lem}\label{fp-inv-subh}
Let $p\in[2,\infty)$. Suppose that $\lambda_{1}$ and $\lambda_{2}$  are
 nonnegative continuous functions in $[0,1)$.
 If $f\in\mathbf{H}_{\lambda_{1},\lambda_{2}}(\mathbb{B}^{n})$,
 then $|f|^p$ is invariant subharmonic.
\end{lem}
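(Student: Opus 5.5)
The natural route is to compute $\Delta_{h}(|f|^{p})$ directly and show it is nonnegative wherever it is defined. Invariant subharmonicity then follows from the criterion recalled before Theorem \ref{thm-1}: a $C^{2}$ function is invariant subharmonic iff $\Delta_h\ge0$. Since $|f|^p$ need not be $C^2$ near the zero set of $f$, I would work with the regularization $u_\varepsilon=(f^2+\varepsilon)^{p/2}$, $\varepsilon>0$, which is $C^2$ on $\mathbb{B}^n$. Because $u_\varepsilon\downarrow |f|^p$ as $\varepsilon\downarrow0$, and the sub-mean value inequality \eqref{ho-1} is preserved under decreasing limits by monotone convergence, it suffices to prove $\Delta_h u_\varepsilon\ge 0$ for every $\varepsilon>0$.

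The key computation uses the form $\Delta_h=(1-|x|^2)\left[(1-|x|^2)\Delta+2(n-2)\langle\nabla,x\rangle\right]$; with $\alpha=n-2$ the zero-order term $\alpha(n-2-\alpha)$ vanishes. This operator is a second-order operator without zero-order term, so it obeys the chain rule $\Delta_h(\Phi\circ v)=\Phi'(v)\Delta_h v+\Phi''(v)|\nabla^h v|^2$, where $|\nabla^h v|^2=(1-|x|^2)^2|\nabla v|^2$. Apply this with $v=f^2$ and $\Phi(s)=(s+\varepsilon)^{p/2}$. First, $\Delta_h(f^2)=2f\Delta_h f+2|\nabla^h f|^2$ and $|\nabla^h f^2|^2=4f^2|\nabla^h f|^2$. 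This gives
\[
\Delta_h u_\varepsilon = p(f^2+\varepsilon)^{\frac p2-1}\left(f\Delta_h f+|\nabla^h f|^2\right)+p(p-2)(f^2+\varepsilon)^{\frac p2-2}f^2|\nabla^h f|^2 .
\]

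Now use the definition of $\mathbf{H}_{\lambda_1,\lambda_2}(\mathbb{B}^n)$. Only the lower half, $f\Delta_h f\ge0$, is needed; the upper bounds involving $\lambda_1,\lambda_2$ play no role here. Together with $p\ge2$, every term on the right is nonnegative, so $\Delta_h u_\varepsilon\ge0$. Letting $\varepsilon\to0$ as above completes the argument. (For $p\ge 2$ one can also apply the chain rule on $\{f\neq0\}$ directly, but the regularization avoids the zero set cleanly.)

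The main obstacle is purely technical: justifying that the limit of invariant subharmonic functions remains invariant subharmonic. I would handle it directly from the definition \eqref{ho-1}. Each $u_\varepsilon$ satisfies \eqref{ho-1}, being $C^2$ with $\Delta_h u_\varepsilon\ge0$; the limit $|f|^p$ is continuous and not identically $-\infty$; and passing $\varepsilon\to0$ on both sides of \eqref{ho-1} is allowed by monotone, or even dominated, convergence, since $u_\varepsilon\le (f^2+1)^{p/2}$ is bounded on compact sets.
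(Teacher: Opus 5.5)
Your proof is correct and is essentially the paper's argument: the paper applies the same chain rule directly to get $\Delta_h(|f|^p)=p(p-1)|f|^{p-2}|\nabla^h f|^2+p|f|^{p-2}f\Delta_h f\ge 0$ from $p\ge2$ and $f\Delta_h f\ge0$, and then invokes the $C^2$ criterion. Your regularization $u_\varepsilon=(f^2+\varepsilon)^{p/2}$ is harmless but unnecessary, and your stated reason for it is inaccurate: since $t\mapsto|t|^p$ is in $C^2(\mathbb{R})$ for $p\ge2$ (its second derivative $p(p-1)|t|^{p-2}$ is continuous), $|f|^p$ is already in $C^2(\mathbb{B}^n)$ for real-valued $f\in C^2(\mathbb{B}^n)$, including on the zero set of $f$.
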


\begin{proof}
Since
\beq\label{eq-fp-laph}
\Delta_{h}\left(|f(x)|^{p}\right)
=p(p-1)|f(x)|^{p-2}|\nabla^{h} f(x)|^{2}
+p|f(x)|^{p-2}f(x)\Delta_{h}f(x),
\eeq
we obtain $\Delta_{h}\left(|f(x)|^{p}\right)\geq 0$
from $p\geq 2$ and $f(x)\Delta_{h}f(x)\geq 0$.
This completes the proof.
\end{proof}

%

\subsection*{The proof of Theorem \ref{thm-1}}
Since $f\in\mathbf{H}_{\lambda_{1},\lambda_{2}}(\mathbb{B}^{n})$, from \eqref{eq-fp-laph},
we see that for $x\in\mathbb{B}^{n}$,
\beq\label{ef-01}
\Delta_{h}\left(|f(x)|^{p}\right)
&\leq&p(p-1)|f(x)|^{p-2}|\nabla^{h} f(x)|^{2}+p|f(x)|^{p-2}\left(\lambda_{1}(|x|)|\nabla^{h} f(x)|^{2}+\lambda_{2}(|x|)|f(x)|^{2}\right)\\ \nonumber
&=&p\big(p-1+\lambda_{1}(|x|)\big)|f(x)|^{p-2}|\nabla^{h} f(x)|^{2}+p\lambda_{2}(|x|)|f(x)|^{p}.
\eeq
Then by Theorem \ref{Thm-J} and (\ref{ef-01}), we have
\beq\label{ef-03}
M_{p}^{p}(r,f)&=&|f(0)|^{p}+\int_{\mathbb{B}^{n}_{r}}g_{n}(|x|,r)\Delta_{h}\big(|f(x)|^{p}\big)dV_{h}(x)\\ \nonumber
&\leq&|f(0)|^{p}+p\mathcal{J}_{1}(r)
+p\mathcal{J}_{2}(r),
\eeq
where
\beqq
\mathcal{J}_{1}(r)=\int_{\mathbb{B}^{n}_{r}}g_{n}(|x|,r)\left(p-1+\lambda_{1}(|x|)\right)|f(x)|^{p-2}|\nabla^{h} f(x)|^{2}dV_{h}(x)
\eeqq
and
\beqq
\mathcal{J}_{2}(r)=\int_{\mathbb{B}^{n}_{r}}\lambda_{2}(|x|)g_{n}(|x|,r)|f(x)|^{p}dV_{h}(x).
\eeqq

Next, we estimate $\mathcal{J}_{1}(r)$ and $\mathcal{J}_{2}(r)$.
For $\rho\in[0,r)$, the H\"older's inequality in the case $p>2$ yields
\be\label{ef-02}
\int_{\mathbb{S}^{n-1}}|f(\rho\zeta)|^{p-2}|\nabla f(\rho\zeta)|^{2}d\sigma(\zeta)\leq M_{p}^{2}(\rho,\nabla f)M_{p}^{p-2}(\rho,f).
\ee
Elementary calculations lead to
\beq\label{ef-04}
g_{n}(\rho,r)&=&\frac{1}{n}\int_{\rho}^{r}\frac{(1-s^{2})^{n-2}}{s^{n-1}}ds
\leq\frac{1}{n\rho^{n-1}}\int_{\rho}^{r}(1-s^{2})^{n-2}ds\\ \nonumber
&\leq&\frac{(1+r)^{n-2}}{n\rho^{n-1}}\int_{\rho}^{r}(1-s)^{n-2}ds\\ \nonumber
&\leq&\frac{2^{n-2}}{n(n-1)}\frac{(1-\rho)^{n-1}}{\rho^{n-1}}.
\eeq
By (\ref{eq-k-1}), (\ref{ef-02}), (\ref{ef-04}) and the inequality
\beqq
(1-\rho)\omega\left(\frac{1}{1-\rho}\right)\leq \omega(1),
\eeqq
we see that there is a positive constant $C$ such that
\beq\label{ef-07}
\mathcal{J}_{1}(r)&=&n\int_{0}^{r}\frac{\rho^{n-1}g_{n}(\rho,r)}{(1-\rho^{2})^{n-2}}\left(p-1+\lambda_{1}(\rho)\right)\left(\int_{\mathbb{S}^{n-1}}|f(\rho\zeta)|^{p-2}|\nabla f(\rho\zeta)|^{2}d\sigma(\zeta)\right)d\rho\\  \nonumber
&\leq&n\int_{0}^{r}\frac{\rho^{n-1}g_{n}(\rho,r)}{(1-\rho^{2})^{n-2}}\left(p-1+\lambda_{1}(\rho)\right)
M_{p}^{2}(\rho,\nabla f)M_{p}^{p-2}(\rho,f)d\rho\\  \nonumber
&\leq&\frac{2^{n-2}}{n-1}M_{p}^{p-2}(r,f)\int_{0}^{r}\left(p-1+\lambda_{1}(\rho)\right)(1-\rho)M_{p}^{2}(\rho,\nabla f)d\rho\\  \nonumber
&\leq&\frac{2^{n-2}C}{n-1}M_{p}^{p-2}(r,f)\int_{0}^{r}\left(p-1+\lambda_{1}(\rho)\right)(1-\rho)\left(\omega\left(\frac{1}{1-\rho}\right)\right)^{2}d\rho
\\  \nonumber
&\leq&\frac{2^{n-2}C\omega(1)}{n-1}M_{p}^{p-2}(r,f)\int_{0}^{r}\left(p-1+\lambda_{1}(\rho)\right)\omega\left(\frac{1}{1-\rho}\right)d\rho.
\eeq

Since $M_{p}^{p}(r,f)$ is non-decreasing with respect to $r\in[0,1)$ from Theorem \ref{Thm-J} and Lemma \ref{fp-inv-subh},   we see that
\beq\label{ef-08}
\mathcal{J}_{2}(r)&=&\int_{0}^{r}\frac{n\rho^{n-1}g_{n}(\rho,r)\lambda_{2}(\rho)}{(1-\rho^{2})^{n}}M_{p}^{p}(\rho,f)d\rho\\  \nonumber
&\leq&M_{p}^{p}(r,f)\int_{0}^{r}\frac{n\rho^{n-1}g_{n}(\rho,r)\lambda_{2}(\rho)}{(1-\rho^{2})^{n}}d\rho.
\eeq

Combining (\ref{ef-03}), (\ref{ef-07}) and (\ref{ef-08}) gives
\beqq
C(r)M_{p}^{p}(r,f)&\leq&|f(0)|^{p}+
\frac{2^{n-2}pC\omega(1)}{n-1}M_{p}^{p-2}(r,f)\int_{0}^{r}\left(p-1+\lambda_{1}(\rho)\right)\omega\left(\frac{1}{1-\rho}\right)d\rho,
\eeqq
which, together  with the non-decreasing property of $M_{p}^{p}(r,f)$, yields that
\beqq
C(r)M_{p}^{2}(r,f)&\leq&|f(0)|^{2}+
\frac{2^{n-2}pC\omega(1)}{n-1}\int_{0}^{r}\left(p-1+\lambda_{1}(\rho)\right)\omega\left(\frac{1}{1-\rho}\right)d\rho.
\eeqq
The proof of this theorem is complete.
\qed

\bigskip

{\bf Data Availability} Our manuscript has no associated data.\\

{\bf Conflict of interest} The authors declare that they have no conflict of interest.

\bigskip

\section*{Acknowledgments}
The research of The frist author is supported by Changsha Municipal Natural Science Foundation (grant no. kq2502155),
the Scientific Research Fund of Hunan Provincial Education Department (grant no. 25A0086),
the exchange program for the 4th meeting of the China-Montenegro Science and Technology Cooperation Committee (grant no. 4-4)
 and the construct program of the key discipline in Hunan Province.
The second author was partly supported by the
National Science Foundation of China (grant no. 12571080) and Guangxi Natural Science Foundation $\#$2026GXNSFFA00640002.
The third author is partially supported by JSPS KAKENHI (grant no. JP22K03363).
The  fourth author is supported by National Science Foundation of China
	(grant no. 12371071 and  12571081).


\begin{thebibliography}{99}
	\bibitem{Ai2002}
	\textsc{H. Aikawa:}
	\emph{H\"older continuity of the {D}irichlet solution for a general domain},
	{Bull. Lond. Math. Soc.},
	\textbf{34} (2002), 691--702.


	
		\bibitem{Ai2010}
		\textsc{H. Aikawa:}
		\emph{Modulus of continuity of the Dirichlet solutions},
		{Bull. Lond. Math. Soc.},
		\textbf{42} (2010), 857--867.

\bibitem{Ar} \textsc{G. Arfken:} \emph{Mathematical Methods for Physicists}, 3rd edn. Academic Press, Orlando, FL (1985).

\bibitem{AG} \textsc{D. H. Armitage  and S. J. Gardiner:}
 \textit{Classical potential theory}, Springer,    2000.

\bibitem{A-P}
\textsc{K. Astala and L. P\"aiv\"arinta:}
\emph{Calder\'on's inverse conductivity problem in the plane},
{Ann. Math.}, \textbf{163} (2006), 265--299.

\bibitem{ABR-2001} \textsc{S. Axler, P. Bourdon and W. Ramey:}
 \textit{Harmonic function theory}, Springer-Verlag, New York,   2001.
		




\bibitem{Bea} \textsc{ A. F. Beardon:}
\textit{The geometry of discrete groups}, Springer-Verlag, New York
Inc. 1983.


\bibitem{BS} \textsc{S. Bergman and M. Schiffer:} \emph{Kernel Functions and Elliptic Differential Equation in Mathematical Physics
Pure and Applied Mathematics.} Academic Press, New York (1953).

		\bibitem{AH2014}
		\textsc{A. Borichev and H. Hedenmalm:}
		\emph{Weighted integrability of polyharmonic functions},
		{Adv. Math.},
		\textbf{264} (2014), 464--505.





		
	\bibitem{chen23}
	\textsc{J. L. Chen, S. L. Chen, M. Z. Huang and H. Q. Zheng:}
	\emph{Isoperimetric type inequalities for mappings induced by
		weighted {L}aplace differential operators},
	{J. Geom. Anal.},
	\textbf{33} (2023), 45 pp.
	
	\bibitem{chen2021}
	\textsc{J. L. Chen, M. Z. Huang, S. Lee  and X. T. Wang:}
	\emph{Equivalent norms of solutions to hyperbolic Poisson's equations},
	{J. Geom. Anal.},
	\textbf{31} (2021), 8173--8201.
	
	\bibitem{chen2018}
	\textsc{J. L. Chen, M. Z. Huang, A. Rasila and X. T. Wang:}
	{\it On Lipschitz continuity of solutions of hyperbolic Poisson's equation},
	Calc. Var. Partial Differential Equations, \textbf{57} (2018), 32 pp.


\bibitem{C2026}
	\textsc{S. L.  Chen:}
	\emph{\it The Littlewood-Paley type $g$-function, the Lusin type $s$-function, and their applications},
	{Adv. Math.},  \textbf{496} (2026),  110993.
	
		\bibitem{chen24B}
	\textsc{S. L.  Chen and H. Hamada:}
	\emph{\it Equivalent norms, Hardy-Littlewood-type theorems,
		and their applications},
	{Sci. China Math.},  \textbf{68} (2025), 533--558.
	
	\bibitem{CH2025MathZ}
     \textsc{S. L.  Chen and H. Hamada:}
     \emph{\it Characterizations of  Hardy  spaces and composition operators in bounded symmetric domains},
     Math. Z.,
     \textbf{311} (2025), Paper No. 16, 24 pp.

	\bibitem{chen24}
	\textsc{S. L.  Chen, H. Hamada and D. Xie:}
	\emph{\it Hardy-Littlewood type theorems and a Hopf type
		lemma},
	{J. Geom. Anal.}, \textbf{34} (2024), 23 pp.


	
	
\bibitem{CPR} \textsc{S. L. Chen,  S. Ponnusamy, and A. Rasila:}
\emph{On characterizations of Bloch-type, Hardy-type, and Lipschitz-type spaces},
{Math. Z.}, {\bf 279} (2015), 163--183.


\bibitem{CRW} \textsc{S. L. Chen, A. Rasila, and X. Wang:}
\emph{Radial growth, Lipschitz and Dirichlet spaces on solutions to the non-homogenous Yukawa equation},
Israel J. Math., {\bf 204} (2014), 261--282.


\bibitem{CS-2015}
	\textsc{S. L.   Chen and Z. H. Su:}
Radial growth and Hardy-Littlewood-type theorems on hyperbolic harmonic functions,
{Filomat}, {\bf 29} (2015), 361--370.

	
	\bibitem{chen15}
	\textsc{S. L.   Chen and M. Vuorinen:}
	\emph{Some properties of a class of elliptic partial differential
		operators},
	{J. Math. Anal. Appl.},
	\textbf{431} (2015), 1124--1137.
	
	
	\bibitem{DF16}
	\textsc{P. Diaconis and P. Forrester:}
	\emph{Hurwitz and the origins of random matrix theory in mathematics},
	{Random Matrices Theory Appl.}, 	\textbf{6} (2017),
	26 pp.
	

	
	\bibitem{DK1997}
	\textsc{K. Dyakonov:}
	\emph{Equivalent norms on Lipschitz-type spaces of holomorphic functions},
	{Acta Math.}, 	\textbf{178} (1997),
	143--167.
	
    \bibitem{DK04}
    \textsc{K. Dyakonov:}
    \emph{Holomorphic functions and quasiconformal mappings with smooth moduli},
   {Adv. Math.}, 	\textbf{187} (2004),
   146--172.
	
	


 \bibitem{DK06}
    \textsc{K. Dyakonov:} \emph{Addendum to ``Strong Hardy-Littlewood theorems for
analytic functions and mappings of finite distortion''},  {Math. Z.}, 	\textbf{254} (2006),
   433--437.



\bibitem{GPP} \textsc{D. Girela, M. Pavlovic and J. A. Pel\'{a}ez:}
\emph{Spaces of analytic functions of Hardy-Bloch type}, {J. Anal.
Math.} {\bf 100}(2006), 53--81.

\bibitem{GP} \textsc{D. Girela and J. A. Pel\'{a}ez:}
\emph{Integral means of analytic functions}, Ann. Acad. Sci. Fenn.  Math. {\bf 29}(2004), 459--469.





	
	\bibitem{HL31}
	\textsc{G. Hardy and J. Littlewood:}
	\emph{Some properties of conjugate functions},
	{J. Reine Angew. Math.},
	\textbf{167} (1932),  405--423.
	
	
	\bibitem{HL32}
	\textsc{G. Hardy and J. Littlewood:}
	\emph{Some properties of fractional integrals. {II}},
	{Math. Z.},
	\textbf{34} (1932),  403--439.
	
\bibitem{HZ}  \textsc{E. Heinz:}
\emph{On certain nonlinear elliptic differential equations and univalent mappings},
J. Anal. Math., {\bf 5}(1956/57), 197--272.



	\bibitem{Hur1897}
	\textsc{A. Hurwitz:}
	\emph{\it  \"{U}ber die Erzeugung der invarianten durch integration},
	{ Nachr. Ges. Wiss.}, G\"{o}ttingen, \textbf{1897} (1897), 71--90.
	
	\bibitem{It2012}
	\textsc{T. Itoh:}
	\emph{\it  Modulus of continuity of {$p$}-{D}irichlet solutions in a metric measure space},
	{Ann. Acad. Sci. Fenn. Math.}, \textbf{37} (2012), 339--355.
	
	\bibitem{KMM2021}
	\textsc{A. Khalfallah, M.  Mateljevi\'c and M. Mhamdi:}
	\emph{Some properties of mappings admitting general {P}oisson
		representations},
	{Mediterr. J. Math.},
	\textbf{18} (2021),  19 pp.
	
	
	
	
\bibitem{Kur} \textsc{\"U. Kuran:} \emph{Subharmonic behavior of $|h|^{p}~(p>0, h~\mbox{harmonic})$},
{J. London Math. Soc.}, {\bf 8} (1974), 529--538.



	

	
	\bibitem{leu} 	\textsc{H. Leutwiler:}
	\emph{Best constants in the Harnack inequality for the Weinstein equation},
	{Aequationes Math.},
	\textbf{34} (1987), 304--315.
	
	\bibitem{li24}
	\textsc{Q. Li and J. Chen:}
	\emph{Schwarz's Lemma for the solutions to the Dirichlet
		problems for the invariant Laplacians},
	{Bull. Malays. Math. Sci. Soc.}, 	\textbf{47} (2024),
	21 pp.
	
	\bibitem{Liu04}
	\textsc{C. Liu and L. Peng:}
	\emph{Boundary regularity in the Dirichlet problem for the invariant {L}aplacians {$\Delta_\gamma$} on the unit real ball},
	{Proc. Amer. Math. Soc.},
	\textbf{132} (2004), 3259--3268.
	
	\bibitem{Liu09}
	\textsc{C. Liu and L. Peng:}
	\emph{Generalized {H}elgason-{F}ourier transforms associated to variants of the {L}aplace-{B}eltrami operators on the unit ball in {$\Bbb R^n$}},
	{Indiana Univ. Math. J.},
	\textbf{58} (2009), 1457--1491.
	
	\bibitem{Liu21}
	\textsc{C. Liu, A. Per\"{a}l\"{a} and J. Si:}
	\emph{Weighted integrability of polyharmonic functions in the higher-dimensional case},
	{Anal. PDE},
	\textbf{14} (2021), 2047--2068.
	
	\bibitem{Liu24}
	\textsc{C. Liu and H. Xu:}
	\emph{Lipschitz continuity of the solutions to the Dirichlet problems for the invariant Laplacians},
	{J. Math. Anal. Appl.},
	\textbf{538} (2024), 11 pp.
	

\bibitem{Mar}  \textsc{G. J. Martin:}
\emph{Harmonic degreen 1 maps are diffeomorphisms: Lewy's theorem for curved metrics},
{Trans. Amer. Math. Soc.}, {\bf 368} (2016), 647--658.


	\bibitem{Ni}   \textsc{J. C. C. Nitsche:} \emph{The boundary behavior of minimal surfaces.
Kellogg's theorem and branch points on the boundary}, {Inven. Math.},
 {\bf 8} (1969), 313--333.


	
	\bibitem{NO88}
	\textsc{C. Nolder and D. Oberlin:}
	\emph{Moduli of continuity and a {H}ardy-{L}ittlewood theorem},
	Complex analysis, Joensuu 1987, 265--272.
Lecture Notes in Math., 1351.
Springer-Verlag, Berlin, 1988.
	



	\bibitem{Ol14}
	\textsc{A. Olofsson:}
	\emph{Differential operators for a scale of {P}oisson type kernels
		in the unit disc},
	{J. Anal. Math.},
	\textbf{123} (2014), 227--249.
	
	\bibitem{Ol20}
	\textsc{A. Olofsson:}
	\emph{On a weighted harmonic Green function and a theorem of
		{L}ittlewood},
	{Bull. Sci. Math.},
	\textbf{158} (2020), 63 pp.


\bibitem{Pav}
	\textsc{M. Pavlovi\'c:}
	\emph{Lipschitz conditions on the modulus of a harmonic function},
	{Rev. Mat. Iberoam.},
	\textbf{23} (2007), 831--845.
	

\bibitem{Qiu} \textsc{G. H. Qiu:} \emph{Interior curvature estimates for hypersurfaces of prescribing scalar curvature in dimension three},
{Amer. J. Math.}, {\bf 146} (2024), 579--605.

\bibitem{SS-05} \textsc{S. Stevi\'c:} \emph{Area  type inequalities and integral means of harmonic functions on the unit ball}, {J. Math. Soc. Japan},
{\bf 59} (2007), 583--601.

\bibitem{Sto-2016} \textsc{M. Stoll:}
\emph{Harmonic and subharmonic function theory on the hyperbolic ball}, Cambridge University Press, Cambridge, 2016.

	
	\bibitem{U} \textsc{D. Ullrich:} \emph{Radial limits of $M$-subharmonic functions},
{Trans. Am.  Math. Soc.},
{\bf 292} (1985), 501--518.



\bibitem{Wan} \textsc{M. T. Wang:} \emph{Interior gradient bounds for solutions to the minimal surfacesystem},
{Amer. J. Math.},
{\bf 126} (2004), 921--934.
	
	\bibitem{ZHD24}
	\textsc{D. Zhong, M. Huang and D. Wei:}
	\emph{Some inequalities for self-mappings of unit ball satisfying
		the invariant {L}aplacians},
	{Monatsh. Math.}, \textbf{203} (2024), 911--925.
	
	\bibitem{zhou22}
	\textsc{L. Zhou:}
	\emph{A {B}ohr phenomenon for {$\alpha$}-harmonic functions},
	{J. Math. Anal. Appl.}, \textbf{505} (2022), 21 pp.
	
	
	
	\bibitem{Z59}
	\textsc{A. Zygmund:}
	\emph{Trigonometric series},
Cambridge University Press, New York, 1959.
	
	\end{thebibliography}
\end{document}